 \def\l@subsection{\@tocline{2}{0pt}{4pc}{6pc}{}}
\def\l@subsubsection{\@tocline{3}{0pt}{8pc}{8pc}{}}
\newcommand{\defi}[1]{{\emph{#1}}}
\renewcommand{\a}{\alpha}
\renewcommand{\b}{\beta}
\newcommand{\ds}{\displaystyle}
\newcommand{\bw}{\bigwedge}
\newcommand{\Gr}{{\bf G}}
\newcommand{\onto}{\twoheadrightarrow}
\newcommand{\oo}{\otimes}
\newcommand{\pd}{\partial}
\renewcommand{\P}{{\bf P}}
\newcommand{\Rproj}{{\bf R}}
\newcommand{\W}{{\mc W}}
\newcommand{\Z}{\mathbb{Z}}
\newcommand{\Q}{\mathbb{Q}}
\newcommand{\C}{\mathbb{C}}
\renewcommand{\k}{\Bbbk}
\newcommand{\RR}{\mathcal{R}}
\newcommand{\PP}{\mathcal{P}}
\newcommand{\EE}{\mathcal{E}}
\newcommand{\FF}{\mathcal{F}}
\newcommand{\SU}{\mathcal{SU}}
\newcommand{\Ann}{\operatorname{Ann}}
\newcommand{\Ext}{\operatorname{Ext}}
\newcommand{\Hom}{\operatorname{Hom}}
\newcommand{\rk}{\operatorname{rank}}
\newcommand{\Sp}{\operatorname{Sp}}
\newcommand{\Aut}{\operatorname{Aut}}
\newcommand{\Spec}{\operatorname{Spec}}
\newcommand{\Sym}{\operatorname{Sym}}
\newcommand{\Hilb}{\operatorname{Hilb}}
\newcommand{\Fitt}{\operatorname{Fitt}}
\newcommand{\Grass}{\operatorname{Gr}}
\newcommand{\End}{\operatorname{End}}
\newcommand{\Pic}{\operatorname{Pic}}
\newcommand{\coker}{\operatorname{coker}}
\newcommand{\Spn}{\operatorname{Span}}
\newcommand{\TC}{\operatorname{TC}}
\newcommand{\gr}{\operatorname{gr}}
\newcommand{\pr}{\operatorname{pr}}
\newcommand{\rank}{\operatorname{rank}}
\newcommand{\im}{\operatorname{im}}
\newcommand{\id}{\operatorname{id}}
\newcommand{\spn}{\operatorname{Span}}
\newcommand{\bb}[1]{\mathbb{#1}}
\renewcommand{\rm}[1]{\textrm{#1}}
\newcommand{\mc}[1]{\mathcal{#1}}
\newcommand{\mf}[1]{\mathfrak{#1}}
\newcommand{\ol}[1]{\overline{#1}}
\newcommand{\op}[1]{\operatorname{#1}}
\def\lra{\longrightarrow}
\newcommand{\surj}{\twoheadrightarrow}
\newcommand{\inj}{\hookrightarrow}
\newcommand{\isom}{\xrightarrow{
   \,\smash{\raisebox{-0.3ex}{\ensuremath{\scriptstyle\simeq}}}\,}}
\newcommand{\bwedge}{\mbox{\normalsize $\bigwedge$}}
\def\dot{\mathchar"013A}
\newcommand{\hdot}{{\raise1pt\hbox to0.35em{\!\Huge $\dot$}}}
\definecolor{dkgreen}{RGB}{0,100,0}
\definecolor{dkbrown}{RGB}{139,69,19}
\newtheorem{theorem}{Theorem}[section]
\newtheorem*{theorem*}{Theorem}
\newtheorem*{problem*}{Problem}
\newtheorem{lemma}[theorem]{Lemma}
\newtheorem{conjecture}[theorem]{Conjecture}
\newtheorem{proposition}[theorem]{Proposition}
\newtheorem{corollary}[theorem]{Corollary}
\newtheorem*{corollary*}{Corollary}
\newtheorem*{claim*}{Claim}
\theoremstyle{definition}
\newtheorem{definition}[theorem]{Definition}
\newtheorem*{definition*}{Definition}
\newtheorem{example}[theorem]{Example}
\newtheorem{question}[theorem]{Question}
\newtheorem{remark}[theorem]{Remark}
\newtheorem*{remark*}{Remark}
\newtheorem*{notation*}{Notation}
\newtheorem*{ack}{Acknowledgments}
\numberwithin{equation}{section}
\newcommand{\abs}[1]{\lvert#1\rvert}
\begin{document}

\title{Reduced resonance schemes and Chen ranks}

\author[M. Aprodu]{Marian Aprodu}
\address{Marian Aprodu: Simion Stoilow Institute of Mathematics
\hfill \newline\texttt{}
 \indent P.O. Box 1-764,
RO-014700 Bucharest, Romania, and \hfill  \newline
\indent  Faculty of Mathematics and Computer Science, University of Bucharest,  Romania}
\email{{\tt marian.aprodu@imar.ro}}

\author[G. Farkas]{Gavril Farkas}
\address{Gavril Farkas: Institut f\"ur Mathematik,   Humboldt-Universit\"at zu Berlin \hfill \newline
\indent Unter den Linden 6,
10099 Berlin, Germany}
\email{{\tt farkas@math.hu-berlin.de}}

\author[C. Raicu]{Claudiu Raicu}
\address{Claudiu Raicu: Department of Mathematics,
University of Notre Dame \hfill \newline
\indent 255 Hurley Notre Dame, IN 46556, USA, and \hfill\newline
\indent Simion Stoilow Institute of Mathematics, \hfill\newline
\indent  P.O. Box 1-764, RO-014700 Bucharest, Romania}
\email{{\tt craicu@nd.edu}}

\author[A. Suciu]{Alexander I. Suciu}
\address{Alexander I. Suciu: Department of Mathematics,
Northeastern University \hfill \newline
\indent Boston, MA,
02115, USA}
\email{{\tt  a.suciu@northeastern.edu}}

\subjclass[2020]{Primary
14H60,  
20J05.  
Secondary
13F55, 
14D06, 
14H51,  
14M12,  
14M15,  
16E05,  
20F40,  
57M07, 
57R22.  
}

\keywords{Resonance variety, Koszul module, separable, isotropic,
reduced scheme, vector bundle on a curve, Chen ranks, K\"ahler group,
right-angled Artin group.}

\begin{abstract}
The resonance varieties are cohomological invariants that are studied in
a variety of topological, combinatorial, and geometric contexts. We discuss
their scheme structure in a general algebraic setting and introduce various
properties that ensure the reducedness of the associated projective resonance
scheme. We prove an asymptotic formula for the Hilbert series of the associated
Koszul module, then discuss applications to vector bundles on algebraic curves
and to Chen ranks formulas for finitely generated groups, with special emphasis
on K\"ahler and right-angled Artin groups.
\end{abstract}

\maketitle

\begin{spacing}{0.0}
\setcounter{tocdepth}{1}
\tableofcontents
\end{spacing}

\section{Introduction}
\label{sect:main}

The concept of resonance initially appeared in topology. For a reasonably nice topological
space $X$, its resonance variety $\RR(X)$ is defined as the jump locus
\begin{equation}\label{eq:res_top}
\RR(X)\coloneqq \Bigl\{a\in H^1(X,\C): H^1\bigl(H^*(X,\C), \delta_a\bigr)\neq 0\Bigr\},
\end{equation}
where $\delta_a\colon H^i(X, \C)\rightarrow H^{i+1}(X, \C)$ is the
differential obtained by left-multiplication by $a$. Assuming $X$ is $1$-formal in
the sense of Sullivan, $\RR(X)$ is intimately related to the \emph{characteristic variety}
$\mathcal{V}(X)$ parametrizing rank $1$ local systems on $X$ with non-vanishing homology.
More precisely (see \cite{DPS-duke}), the resonance can be described as the tangent cone to the
characteristic variety, that is, $\RR(X)=\TC_1\bigl(\mathcal{V}(X)\bigr)$, which links the
resonance to the much studied theory of characteristic varieties. A set-theoretic description
of $\RR(X)$ for compact K\"ahler manifolds in terms of irrational pencils has been given
by Dimca--Papadima--Suciu \cite{DPS-duke}. For complements of hyperplane arrangements
a combinatorial (matroidal) description of the resonance has been found by Falk--Yuzvinsky \cite{FY},
building on previous work of Libgober--Yuzvinsky \cite{LY00} and Falk \cite{Fa97}. A purely algebraic
definition of the resonance variety has been put forward by Papadima--Suciu \cite{PS-crelle} and
linked to the the theory of Koszul modules. Important applications of this link, including a proof
of the Generic Green's Conjecture for syzygies of canonical curves in sufficiently high characteristic
have been obtained in \cites{AFPRW, AFPRW2}.

By its very definition as a jump locus, the resonance $\RR(X)$ carries a natural scheme structure
that has been however largely ignored for  some time. However, even in the much studied case
of hyperplane arrangements, this scheme structure comes to the fore in the formulation of
Suciu's Conjecture \cite{Su-conm} concerning the Chen ranks of the fundamental group of
a hyperplane arrangement, see also \cites{CSc-adv, SS-tams}. We aim to study systematically
the scheme-theoretic properties of resonance varieties in algebraic context, introduce several
natural scheme-theoretic properties that resonance varieties often satisfy  and explain how they
are naturally linked to other interesting concepts in the theory of K\"ahler groups, vector bundles
on algebraic varieties, or geometric group theory.

We begin by recalling the connection between resonance varieties and Koszul
modules following the set-up of \cites{AFPRW2, AFPRW}, or \cite{PS-crelle}.
Let $V$ be a finite-dimensional vector space over an algebraically closed
field $\k$ of characteristic $0$, and let $V^{\vee}$ be its $\k$-dual.
We let $S\coloneqq \Sym(V)$ denote the symmetric algebra on $V$.
For a linear subspace $K\subseteq \bigwedge^2 V$, we define
the \defi{Koszul module}\/ $W(V,K)$ as the middle homology of
the chain complex of graded $S$-modules
\begin{equation*}
\label{eq:intro-WVK}
\begin{tikzcd}[column sep=22pt]
K \oo S \ar[rr, "\left. \delta_2\right|_{K \oo S}"] && V\oo S(1) \ar[r, "\delta_1"] & S(2),
\end{tikzcd}
\end{equation*}
where $\delta_2\colon \bigwedge^2 V\otimes S\rightarrow V\otimes S(1)$ is the
Koszul differential, given by
$\delta_2(v_1\wedge v_2\otimes f)=v_2\otimes v_1 f-v_1\otimes v_2 f$,
whereas $\delta_1(v\otimes f)=v f$ is the multiplication map.
Note that $W(V,K)$ is a graded $S$-module. Its degree $q$ piece $W_q(V,K)$
can be canonically identified with the homology of the following complex
of finite dimensional $\k$-vector spaces,
\[
\begin{tikzcd}[column sep=22pt]
K \oo \Sym^q V \ar[rr, "\left. \delta_2\right|_{K\oo \Sym^q V}"]
&& V\oo \Sym^{q+1} V \ar[r, "\delta_1"] & \Sym^{q+2} V .
\end{tikzcd}
\]
The annihilator $\Ann W(V,K)$ of the $S$-module $W(V,K)$ defines a
{\em resonance scheme}\/ $\RR(V,K)$, whose set-theoretic support, also
denoted by $\RR(V, K)$, has been described in \cite{PS-crelle}  as being
\begin{equation}
\label{eq:def-resonance}
\RR(V,K)\coloneqq\Bigl\{a\in V^\vee  : \text{ there exists $b\in
V^\vee$ such that $a\wedge b\in K^\perp\setminus \{0\}$} \Bigr\}\cup \{0\},
\end{equation}
where $K^{\perp}$ is the kernel of the projection $\bwedge^2 V^{\vee} \surj K^{\vee}$.
We consider here the \emph{projectivized resonance}\/ scheme,
\[
\Rproj(V,K)\coloneqq \mbox{Proj}\bigl(S/\Ann \ W(V,K)\bigr).
\]

For a topological space $X$, we take $V\coloneqq H_1(X, \C)$
and we let $K\subseteq \bigwedge^2 V$ be the image of the map
$\partial_X\colon H_2(X, \C)\to \bwedge^2 H_1(X, \C)$ defined as
the dual of the usual cup product map on $H^1(X, \C)$. With this notation,
we recover the topological resonance $\mathcal{R}(X)=\mathcal{R}(V, K)$
as defined in \eqref{eq:res_top} and studied in \cites{AFPRW, DPS-duke, PS-crelle}.

We investigate in what follows  the geometry of the resonance schemes and introduce
several concepts inspired by the study of particular cases of resonance varieties in
topology and Hodge theory, which turn out to be often satisfied and which ensure
the reducedness of the projectivized resonance $\Rproj(V,K)$. More precisely, in
Section \ref{subsec:bundles} we discuss the resonance varieties associated to
vector bundles on curves, in Section \ref{subsect:Kahler} we treat the case of
K\"{a}hler groups, whereas Section \ref{sect:right-angled} is devoted to
right-angled Artin groups.

\subsection{Separable and isotropic resonance}
\label{intro:sep-isotropic}
We denote by $E\coloneqq \bigwedge V^{\vee}$ the exterior algebra of
$V^{\vee}$ and fix a linear subspace $\ol{V}^{\vee} \subseteq V^{\vee}$.
We say that  $\ol{V}^{\vee}$ is {\em isotropic}\/
(with respect to $K\subseteq \bwedge^2 V$) if $\bwedge^2 \ol{V}^{\vee}\subseteq K^{\perp}$.
The subspace $\ol{V}^{\vee}$ is said to be {\em separable} (with respect to
$K$) if
\begin{equation}
\label{eq:sep}
K^{\perp} \cap \langle \ol{V}^{\vee}\rangle_E \subseteq
\bwedge^2 \ol{V}^{\vee},
\end{equation}
where $\langle \ol{V}^{\vee}\rangle_E$ denotes the ideal of  the exterior algebra
$E$ which is generated by $\ol{V}^{\vee}$.
Finally, $\ol{V}^{\vee}$ is {\em strongly isotropic} if it is both separable and isotropic,
that is, when the following equality holds
\[
K^{\perp} \cap \langle \ol{V}^{\vee}\rangle_E =
\bwedge^2 \ol{V}^{\vee}.
\]
If all the irreducible components of the resonance variety $\RR(V,K)$
are linear subspaces of $V^{\vee}$, we say that $\RR(V,K)$
is {\em separable} (respectively, {\em isotropic}, or {\em strongly isotropic})
if each of those components of $\RR(V,K)$ are
separable (respectively, isotropic, or strongly isotropic).
We use the same terminology for the Koszul module $W(V,K)$,
respectively for the resonance scheme $\Rproj(V,K)$. A characterization
of strongly isotropic resonance $\RR(V,K)$ reminiscent of Petri type
theorems in algebraic geometry is provided in Lemma \ref{lem:separable-multiplication}.

These definitions, while new in this general algebraic context, are inspired by the
study of the topological resonance. For instance, if $X$ is a complex smooth
quasi-projective variety, then $\mathcal{R}(X)$ is linear, but not necessarily
isotropic. If the mixed Hodge structure on $H^1(X,\C)$ is pure, then
$\RR(X)$ is isotropic, see \cite{DPS-duke}. The definition of
separability and strong isotropicity of $\RR(V,K)$ is inspired by the much
studied case of hyperplane arrangements \cites{CSc-adv, SS-tams},
and it is one of the points of this paper that reveal the relevance of these
conditions for resonance varieties studied in different geometric contexts.

With this terminology in place, we can state one of our main  results:

\begin{theorem}
\label{thm:sep->reduced}
Let $K\subseteq \bigwedge^2 V$ be a linear subspace, and suppose
all irreducible components of $\RR(V,K)$ are linear subspaces of $V^{\vee}$.
\begin{enumerate}[itemsep=2pt,topsep=-1pt]
\item \label{t1-1}
If $W(V,K)$ is separable, then the projectivized resonance scheme
$\Rproj(V,K)$ is reduced and its components are disjoint.
\item \label{t1-2}
If $\Rproj(V,K)$ is reduced and isotropic, then it is separable.
\end{enumerate}
\end{theorem}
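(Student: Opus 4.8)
The plan is to reduce both statements to a careful analysis of the graded components $W_q(V,K)$ of the Koszul module relative to each linear component $L \subseteq \RR(V,K)$, using the fact that the annihilator $\Ann W(V,K)$ cuts out $\Rproj(V,K)$ scheme-theoretically. Write $\ol V^\vee = L$ for such a component, and let $\ol V^{\perp} \subseteq V$ be its annihilator, so that $S/\langle \ol V^{\perp}\rangle = \Sym(V/\ol V^{\perp}) =: S_L$ is the coordinate ring of $L$. The key computational input, which I would establish first, is a description of the ``localized'' Koszul module along $L$: the quotient of $W(V,K)$ governing the scheme structure near the generic point of $L$ is itself a Koszul-type module $W(\ol V, \ol K)$ built from the induced data, where $\ol V = V/\ol V^\perp$ and $\ol K \subseteq \bwedge^2 \ol V$ is the image of $K$. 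Separability of $L$ is precisely the condition that translates into $\ol K = \bwedge^2 \ol V$ (equivalently $\ol K^\perp = 0$ inside $\bwedge^2 \ol V^\vee = \bwedge^2 L$), and this is the hinge on which both implications turn.

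For part \eqref{t1-1}, assume $W(V,K)$ is separable and let $L_1,\dots,L_r$ be the linear components. First I would show the $L_i$ are pairwise disjoint in $\Rproj(V,K)$: if $a \in L_i \cap L_j$ with $a \neq 0$, then by \eqref{eq:def-resonance} one finds $b_i, b_j$ with $a \wedge b_i, a\wedge b_j \in K^\perp$; the separability condition \eqref{eq:sep} applied to (the span of) $L_i$ forces $a \wedge b_j \in \bwedge^2 L_i$, which after a short linear-algebra argument pins $b_j$ into $L_i$ modulo $a$, and symmetrically into $L_j$, so that $L_i$ and $L_j$ share a $2$-plane's worth of resonance — iterating, or arguing by dimension, this contradicts $L_i \neq L_j$ being distinct maximal components unless the intersection is empty. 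Given disjointness, $\Rproj(V,K)$ is reduced if and only if it is reduced at the generic point of each $L_i$, i.e.\ if the saturation of $\Ann W(V,K)$ with respect to the prime of $L_i$ equals that prime. Here I invoke the localized description above: separability gives $\ol K = \bwedge^2\ol V$, and for $K = \bwedge^2 V$ one checks directly that $W(V,K) = 0$ in all degrees $q \geq 1$ (the Koszul complex $\bwedge^2 V \oo \Sym^q V \to V \oo \Sym^{q+1}V \to \Sym^{q+2}V$ is exact in the middle — this is the standard Koszul exactness), so $W(\ol V,\ol K)$ vanishes, forcing the annihilator to contain, up to saturation, exactly the prime of $L_i$. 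Assembling over all components yields reducedness.

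For part \eqref{t1-2}, assume $\Rproj(V,K)$ is reduced and isotropic, and fix a component $L = \ol V^\vee$; isotropy says $\bwedge^2 L \subseteq K^\perp$. I must show $K^\perp \cap \langle L \rangle_E \subseteq \bwedge^2 L$. Suppose not: pick $\om \in K^\perp \cap \langle L\rangle_E$ with $\om \notin \bwedge^2 L$. Since $\om$ lies in the ideal generated by $L$, after subtracting an element of $\bwedge^2 L \subseteq K^\perp$ I may assume $\om = \sum_i \ell_i \wedge v_i$ with $\ell_i$ a basis of $L$ and $v_i \in V^\vee$ whose images in $V^\vee/L$ are linearly independent and nonzero — this nonvanishing modulo $L$ is exactly the obstruction to $\om \in \bwedge^2 L$. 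Now I would argue that such an $\om$ produces an embedded or non-reduced structure along $L$: concretely, $\om \in K^\perp$ means $\om$ pairs to zero against $K$, and I would use $\om$ to construct a nonzero element of $W_q(\ol V, \ol K)$ for large $q$, or equivalently a section of $\Rproj(V,K)$'s structure sheaf near $L$ that is nilpotent but nonzero — contradicting reducedness. The cleanest route is probably the contrapositive via the localized Koszul module: reduced $+$ isotropic at $L$ forces $W(\ol V, \ol K) = 0$ in high degree, and by the asymptotic Hilbert-series formula for Koszul modules (the result advertised in the abstract, which I would cite from the body of the paper) this vanishing is equivalent to $\ol K$ having maximal dimension $\binom{\dim \ol V}{2}$, i.e.\ $\ol K = \bwedge^2 \ol V$, i.e.\ $\ol K^\perp = 0$, which unwinds to separability of $L$.

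The main obstacle I anticipate is making the ``localized Koszul module'' $W(\ol V,\ol K)$ precise and relating its vanishing to the scheme-theoretic structure of $\Rproj(V,K)$ along $L$ — in particular, controlling embedded components and ensuring that ``reduced at the generic point of each $L_i$'' plus disjointness genuinely implies globally reduced requires knowing there are no embedded primes, which for part \eqref{t1-1} should follow from the disjointness argument but for part \eqref{t1-2} must be assumed or extracted from the hypothesis. The linear-algebra steps (disjointness of components, normalizing $\om$ modulo $\bwedge^2 L$) are routine; the conceptual weight sits in the dictionary between separability of a component and exactness of the associated Koszul complex.
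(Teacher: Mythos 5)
Your argument hinges on the claim that ``separability of $L=\ol{V}^{\vee}$ is precisely the condition $\ol{K}=\bwedge^2\ol{V}$ (equivalently $\ol{K}^{\perp}=0$),'' and this is false. The condition $\ol{K}=\bwedge^2\ol{V}$ is equivalent to $K^{\perp}\cap\bwedge^2\ol{V}^{\vee}=0$, whereas separability is the condition $K^{\perp}\cap\langle\ol{V}^{\vee}\rangle_E\subseteq\bwedge^2\ol{V}^{\vee}$: it constrains the \emph{mixed} part of $K^{\perp}$ (wedges of $\ol{V}^{\vee}$ with a complement), and says nothing about $K^{\perp}\cap\bwedge^2\ol{V}^{\vee}$. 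Indeed, for an \emph{isotropic} component one has $\bwedge^2\ol{V}^{\vee}\subseteq K^{\perp}$, i.e.\ $\ol{K}=0$ --- the exact opposite of your identification --- and $W(\ol{V},0)\neq 0$, so your claimed vanishing $W(\ol{V},\ol{K})=0$ cannot hold (it would in fact contradict $\ol{V}^{\vee}$ being a component of the resonance at all, since the support of $W(V,K)$ surjects onto that of the quotient module). The correct local reformulation of separability is the surjectivity of the projection $p_M\colon K\cap\ker(\bwedge^2\pi)\to M\cong\ol{V}\otimes U$ (Lemma \ref{lem:alt-sep}), and since this is the ``hinge on which both implications turn'' in your write-up, both halves of the argument collapse: in part \eqref{t1-1} the reducedness step rests on the false vanishing, and in part \eqref{t1-2} the final ``unwinding to separability'' is the same false equivalence, whereas the real content there is to extract surjectivity of $p_M$ from generic reducedness, which the paper does by a nontrivial fiber-dimension count over $A/\mf{m}^2$ after showing $\uppi\colon\mc{W}(V,K)\to\ol{\Omega}(2)$ is a local isomorphism at a reduced point lying on a single component.

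Two further gaps you flag but do not close would remain even with the hinge repaired. First, your set-theoretic disjointness argument only shows that every resonance partner of a point $a\in L_i\cap L_j$ lies in $L_i\cap L_j$; the step ``iterating, or arguing by dimension'' is not an argument, and the paper instead derives disjointness \emph{scheme-theoretically} from the local statement that near any $p\in\ol{\P}$ the whole resonance scheme is a closed subscheme of $\ol{\P}$ (Proposition \ref{prop:uppi-local-isom}, proved by a Nakayama argument on the stalk presentation $K\oo A\to\Omega_p\to\W_p$). Second, reducedness at generic points plus disjointness does not give global reducedness without excluding embedded primes; the paper's local isomorphism at \emph{every} point of $\ol{\P}$ handles this, but your proposal leaves it open. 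The strategy of comparing $W(V,K)$ with a quotient Koszul module $W(\ol{V},\ol{K})$ is the right one and matches the paper's; the error is entirely in what separability says about that quotient.
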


Assume now that the resonance $\RR(V, K)$ is linear and denote by
$\overline{V}_1^{\vee},\dots , \overline{V}_k^{\vee}$ the (linear) components of $\RR(V,K)$.
For each $1\le t\le k$, the inclusion $\overline{V}_t^{\vee} \subseteq V^{\vee}$
corresponds to a linear projection, $\pi_t\colon V\onto \overline{V}_t$. Setting
$\overline{K}_t \coloneqq \big(\bwedge^2\pi_t \big)(K)\subseteq \bwedge^2 \overline{V}_t$,
we obtain in this way Koszul modules $W(\overline{V}_t, \overline{K}_t)$.
As an application of Theorem \ref{thm:sep->reduced}, we prove the following
result, which describes the dimensions of the graded pieces of a separable Koszul module.

\begin{theorem}
\label{thm:separable}
Suppose $W(V,K)$ is a separable Koszul module. Then
\[
\dim\, W_q(V,K) = \sum_{t=1}^k \dim\, W_q(\overline{V}_t, \overline{K}_t),
\]
for all $q\gg 0$.
\end{theorem}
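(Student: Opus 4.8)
The plan is to reduce the statement to an asymptotic comparison of Hilbert functions, exploiting the fact (from Theorem \ref{thm:sep->reduced}\eqref{t1-1}) that separability forces $\Rproj(V,K)$ to be reduced with pairwise disjoint linear components $\ol V_1^\vee,\dots,\ol V_k^\vee$. First I would observe that the natural projections $\pi_t\colon V\onto \ol V_t$ induce maps of Koszul complexes and hence graded $S$-module surjections (or at least maps) $W(V,K)\to W(\ol V_t,\ol K_t)$, assembling into a single homomorphism
\[
\Phi\colon W(V,K)\lra \bigoplus_{t=1}^k W(\ol V_t,\ol K_t).
\]
Here each $W(\ol V_t,\ol K_t)$ is regarded as an $S$-module via the surjection $S=\Sym(V)\onto \Sym(\ol V_t)$ dual to the inclusion $\ol V_t^\vee\inj V^\vee$, so that the support of $W(\ol V_t,\ol K_t)$ as a subscheme of $V^\vee$ is contained in the linear subspace $\ol V_t^\vee$ (this is the standard fact that the resonance/support of a Koszul module $W(\ol V_t,\ol K_t)$ sits inside $\ol V_t^\vee$, and equals the component in question). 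The theorem will follow if I show that $\Phi$ is an isomorphism in all sufficiently large degrees $q\gg 0$, since then $\dim W_q(V,K)=\sum_t \dim W_q(\ol V_t,\ol K_t)$.

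To control $\Phi$ in large degrees I would localize at the generic point of each component and argue with dimensions and supports. The kernel and cokernel of $\Phi$ are graded $S$-modules; it suffices to show each is supported on a subscheme of strictly smaller dimension than $\Rproj(V,K)$, or more precisely that $\ker\Phi$ and $\coker\Phi$ have Hilbert polynomial components that are negligible relative to the leading term contributed by the top-dimensional components. Concretely: because $\Rproj(V,K)$ is reduced with disjoint linear components $\ol V_t^\vee$, the module $W(V,K)$ has, for $q\gg 0$, Hilbert function equal to that of $\bigoplus_t \Gamma$-sections supported generically rank-one along each $\ol V_t^\vee$; the key point is that separability is exactly what pins down the multiplicity along each component to agree with the multiplicity of $W(\ol V_t,\ol K_t)$. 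I would make this precise by comparing the two sides after localization at the prime of $\ol V_t^\vee$: on one side one gets the localization of $W(V,K)$, on the other the localization of $W(\ol V_t,\ol K_t)$ (the other summands vanish there since the components are disjoint), and separability (via \eqref{eq:sep}, i.e. $K^\perp\cap\langle\ol V_t^\vee\rangle_E\subseteq\bwedge^2\ol V_t^\vee$) is precisely the condition that makes the localized map an isomorphism. Having matched generic multiplicities along every component and knowing the modules have dimension equal to $\dim \ol V_t^\vee$, the difference of Hilbert polynomials has strictly lower degree, hence the equality of $\dim W_q$ holds for $q\gg 0$.

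There is an alternative, more hands-on route via the asymptotic Hilbert series formula advertised in the abstract (the "asymptotic formula for the Hilbert series of the associated Koszul module"): apply that formula to $W(V,K)$ and to each $W(\ol V_t,\ol K_t)$ separately, and check that the leading asymptotics add up because separability guarantees that only the top-dimensional strata contribute and that they do so without overlap. Whichever route is taken, the main obstacle is the same: showing that separability controls not merely the \emph{set-theoretic} support (which gives the disjoint linear components for free once we assume linearity of $\RR(V,K)$), but the \emph{scheme-theoretic} behavior along each component closely enough that the generic multiplicity of $W(V,K)$ at $\ol V_t^\vee$ equals that of $W(\ol V_t,\ol K_t)$. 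Unwinding definition \eqref{eq:sep} at the level of localized Koszul complexes — i.e.\ verifying that the comparison map $\Phi$ becomes an isomorphism after inverting the functions vanishing on $\ol V_t^\vee$, with the separability hypothesis supplying exactly the surjectivity/injectivity needed — is the technical heart of the argument; everything else is bookkeeping with Hilbert polynomials and the disjointness from Theorem \ref{thm:sep->reduced}.
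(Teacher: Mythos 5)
Your overall architecture matches the paper's: you form the comparison map $\Phi\colon W(V,K)\to\bigoplus_t W(\ol{V}_t,\ol{K}_t)$, invoke Theorem \ref{thm:sep->reduced}, part \eqref{t1-1}, for reducedness and disjointness, and aim to show that $\Phi$ is an isomorphism in large degrees. But there is a genuine gap in how you propose to establish this. You reduce the problem to checking that $\Phi$ becomes an isomorphism after localizing at the \emph{generic point} of each component $\ol{V}_t^{\vee}$, and then conclude via ``the difference of Hilbert polynomials has strictly lower degree, hence the equality of $\dim W_q$ holds for $q\gg 0$.'' This last inference is false: a generic isomorphism along every component only shows that $\ker\Phi$ and $\coker\Phi$ are supported in strictly smaller dimension, so the two Hilbert polynomials agree in their top-degree terms; it does not force them to be equal. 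From the exact sequence $0\to\ker\Phi\to W(V,K)\to\bigoplus_t W(\ol{V}_t,\ol{K}_t)\to\coker\Phi\to 0$ one gets $\dim W_q(V,K)-\sum_t\dim W_q(\ol{V}_t,\ol{K}_t)=\dim(\ker\Phi)_q-\dim(\coker\Phi)_q$, and there is no reason for the right-hand side to vanish for $q\gg 0$ if either module is nonzero in large degrees (e.g.\ supported on a proper closed subvariety of some $\ol{V}_t^{\vee}$). Since the theorem asserts exact equality of dimensions for $q\gg 0$, not merely agreement of leading asymptotics, generic-point information is not enough.

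What the paper actually proves (Proposition \ref{prop:uppi-local-isom}) is that separability forces $\uppi_t\colon\mc{W}(V,K)\to\mc{W}(\ol{V}_t,\ol{K}_t)$ to be an isomorphism on stalks at \emph{every} point of $\mathbf{P}(\ol{V}_t^{\vee})$, not just the generic one. This is done by a concrete computation with the local presentation \eqref{eq:k-otimes-a} of the Koszul sheaf: the basis of $K$ adapted to a separable subspace (Lemma \ref{lem:2nd-basis}, which encodes the surjectivity of $p_M$ from Lemma \ref{lem:alt-sep}) together with Nakayama's lemma shows that the generators $dv_t$ with $t>\ol{n}$ die in $\W_p$, that $\W_p$ is scheme-theoretically supported on the component, and that it has the same presentation as the stalk of $\mc{W}(\ol{V}_t,\ol{K}_t)$. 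Combined with disjointness (so that at a point of one component the other summands have zero stalk) and the vanishing of all stalks off the resonance, this gives that the sheaf map $\boldsymbol{\Pi}$ is an isomorphism everywhere, hence $\Phi$ is an isomorphism in all sufficiently large degrees. You correctly identified the technical heart of the argument, but the localization you propose is too coarse to carry it out; you would need to upgrade your generic-point check to a stalkwise isomorphism at every closed point, which is exactly the content of Proposition \ref{prop:uppi-local-isom}.
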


If the resonance is strongly isotropic, then all the subspaces
$\overline{K}_t\subseteq \bigwedge^2 \overline{V}_t$ appearing in the
statement of Theorem \ref{thm:separable} are trivial. For a vector space
$U$ we have the canonical identification
$W_q(U, 0)\cong \ker\bigl\{U\otimes \Sym^q U\rightarrow \Sym^{q+1}U\bigr\}$,
and so $\dim  W_q(U,0)=\binom{q+\dim U}{q+2}$. Therefore, we obtain in this
case a simple combinatorial asymptotic formula for the Hilbert series of $W(V,K)$.

\begin{corollary}
\label{cor:strongly-separable}
Suppose $W(V,K)$ is a strongly isotropic Koszul module and let us write
$\RR(V, K)=\overline{V}_1^{\vee} \cup \cdots \cup \overline{V}_k^{\vee}$.
Then for all $q\gg 0$
\[
\dim\, W_q(V,K) = \sum_{t=1}^k (q+1)\binom{q+\dim \overline{V}_t}{q+2}.
\]
\end{corollary}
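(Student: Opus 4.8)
The plan is to deduce this directly from Theorem \ref{thm:separable} together with the two structural observations already recorded in the text preceding the statement. First I would invoke Theorem \ref{thm:separable}: since $W(V,K)$ is strongly isotropic, it is in particular separable, so for $q\gg 0$ one has $\dim W_q(V,K)=\sum_{t=1}^k \dim W_q(\overline{V}_t,\overline{K}_t)$, where $\overline{V}_1^\vee,\dots,\overline{V}_k^\vee$ are the linear components of $\RR(V,K)$ and $\overline{K}_t=\bigl(\bwedge^2\pi_t\bigr)(K)$. The next step is to show that strong isotropicity forces each $\overline{K}_t$ to be the zero subspace of $\bwedge^2\overline{V}_t$. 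By definition, strong isotropicity of the component $\overline{V}_t^\vee$ means $K^\perp\cap\langle\overline{V}_t^\vee\rangle_E=\bwedge^2\overline{V}_t^\vee$; in particular the isotropy part gives $\bwedge^2\overline{V}_t^\vee\subseteq K^\perp$. Dualizing the surjection $\bwedge^2 V\twoheadrightarrow \bwedge^2\overline{V}_t$ identifies $\bwedge^2\overline{V}_t$ with the annihilator in $\bwedge^2 V$ of $(\bwedge^2\overline{V}_t^\vee)^\perp\cap(\ \cdot\ )$; concretely, the inclusion $\bwedge^2\overline{V}_t^\vee\subseteq K^\perp$ is equivalent, by taking perpendiculars, to $\overline{K}_t=\bigl(\bwedge^2\pi_t\bigr)(K)=0$, since $(\bwedge^2\pi_t)(K)$ is exactly the image of $K$ under the dual of the inclusion $\bwedge^2\overline{V}_t^\vee\hookrightarrow\bwedge^2 V^\vee$, and this image vanishes precisely when $K$ is contained in the kernel of that dual map, i.e.\ when $K\subseteq (\bwedge^2\overline{V}_t^\vee)^\perp$, which is the same as $\bwedge^2\overline{V}_t^\vee\subseteq K^\perp$. (Here I only need the isotropy half of the hypothesis; separability enters through Theorem \ref{thm:separable}.)

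With $\overline{K}_t=0$ established, the remaining step is the explicit computation of $\dim W_q(\overline{V}_t,0)$. As recalled in the excerpt, for a vector space $U$ there is a canonical identification $W_q(U,0)\cong\ker\{U\otimes\Sym^q U\to\Sym^{q+1}U\}$, and counting dimensions via the exact sequence $0\to W_q(U,0)\to U\otimes\Sym^q U\to\Sym^{q+1}U\to 0$ (the multiplication map is surjective) gives $\dim W_q(U,0)=\dim U\cdot\binom{q+\dim U-1}{q}-\binom{q+\dim U}{q+1}$. A short binomial manipulation rewrites this as $(q+1)\binom{q+\dim U}{q+2}$; alternatively, and perhaps cleaner for the write-up, the kernel of the Koszul multiplication is the degree-$q$ part of the second syzygy module of $\Sym U$, whose Hilbert series is well known, yielding the same closed form. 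Applying this with $U=\overline{V}_t$ for each $t$ and summing gives $\dim W_q(V,K)=\sum_{t=1}^k(q+1)\binom{q+\dim\overline{V}_t}{q+2}$ for $q\gg 0$, as claimed.

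I do not anticipate a genuine obstacle here: the corollary is essentially a formal consequence of Theorem \ref{thm:separable} plus the two dimension facts stated in the surrounding text. The only point that requires a little care is the duality bookkeeping showing $\overline{K}_t=0$ under strong isotropicity — making sure that the map $\bwedge^2\pi_t$ on $\bwedge^2 V$ is dual to the inclusion $\bwedge^2\overline{V}_t^\vee\hookrightarrow\bwedge^2 V^\vee$, so that its image is zero iff $K$ is perpendicular to $\bwedge^2\overline{V}_t^\vee$. Everything else is a routine binomial identity, which I would dispatch in one line.
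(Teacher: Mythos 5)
Your route is the same as the paper's: invoke Theorem \ref{thm:separable} (strong isotropy implies separability), observe that isotropy of each component $\overline{V}_t^{\vee}$ is equivalent to $\overline{K}_t=0$ (this is exactly the duality remark recorded after Definition \ref{def:isotropic}), and then evaluate $\dim W_q(U,0)$ in closed form. The first two steps are correct and are precisely what the paper does.

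The one thing you must fix is the degree bookkeeping in the last step. From the defining complex $K\otimes\Sym^qV\to V\otimes\Sym^{q+1}V\to\Sym^{q+2}V$ one has $W_q(U,0)=\ker\{U\otimes\Sym^{q+1}U\to\Sym^{q+2}U\}$, so with $n=\dim U$ the correct count is $n\binom{q+n}{q+1}-\binom{q+n+1}{q+2}=(q+1)\binom{q+n}{q+2}$. You instead used the identification $W_q(U,0)\cong\ker\{U\otimes\Sym^{q}U\to\Sym^{q+1}U\}$ (inherited from a typo in the paper's own introductory paragraph, which for the same reason misstates the dimension there as $\binom{q+\dim U}{q+2}$), and the expression you then wrote down, $n\binom{q+n-1}{q}-\binom{q+n}{q+1}$, equals $q\binom{q+n-1}{q+1}$, \emph{not} $(q+1)\binom{q+n}{q+2}$: for $n=2$, $q=0$ it gives $0$ rather than $1$. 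So the ``short binomial manipulation'' you defer does not exist for that expression. Once the index is corrected --- equivalently, use $W_q(V,0)=H^0\bigl(\mathbf{P},\Omega(q+2)\bigr)$ from \eqref{eq:wqv0} together with Bott's formula --- the identity is genuine and the rest of your argument goes through verbatim.
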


We mention that formulae similar to the one above are obtained in \cite{AFRSS} in
the monomial case, using methods specific to square-free monomial ideals, even in
the absence of separability or isotropicity. The intersection point between \cite{AFRSS}
and Theorem \ref{thm:separable} is represented by Proposition \ref{prop:iso-sep-graph},
which gives necessary and sufficient conditions in terms of the associated graph for
the resonance of a monomial subspace $K$ to be isotropic or separable.

\subsection{Chen ranks of groups}
\label{intro:chen ranks}

One of the main applications of this theory is to the computation of the
Chen ranks of large classes of finitely generated groups. Given such a
group $G$, we denote by $G=\Gamma_1(G)\supseteq \cdots \supseteq
\Gamma_q(G)\supseteq \Gamma_{q+1}(G) \supseteq \cdots$ its lower
central series, defined by $\Gamma_{q+1}(G)=[\Gamma_q(G), G]$.
This is a normal, central series; the direct sum of its successive quotients,
\[
\gr(G)\coloneqq \bigoplus_{q\geq 1} \Gamma_q(G)/\Gamma_{q+1}(G).
\]
acquires the structure of a graded Lie algebra, generated by its degree $1$
piece, $G/G'$, where $G'=[G,G]$.  Let $G/G''$ be the maximal metabelian
quotient of $G$, where $G''=[G',G']$. The {\em Chen ranks}\/ of $G$ are defined
\cites{CSc-adv, PS-imrn, SS-tams} as the graded ranks
of this (finitely generated) Lie algebra, that is,
\begin{equation}
\label{eq:chenranks}
\theta_q(G)\coloneqq \rank \gr_q (G/G'')=
\rank \Gamma_{q}\bigl(G/G''\bigr)/\Gamma_{q+1}\bigl(G/G''\bigr).
\end{equation}

From the cohomology algebra of $G$ in low degrees, one can extract the
Koszul module
\[
W(G)\coloneqq W(V,K)
\]
of the group $G$, by setting $V=H_1(G,\k)$ and letting
$K^{\perp}$ be the kernel of the  cup-product map
$\cup_G\colon \bigwedge^2 H^1(G,\k) \to H^2(G,\k)$.
We let $\RR(G)=\RR(V,K)$ be the corresponding resonance
variety of the group and $\Rproj(G)=\Rproj(V,K)$ the corresponding
projectivized resonance scheme.

As shown e.g. in \cites{PS-imrn, SW-forum}, if the group $G$ is $1$-formal
(that is, its pronilpotent Lie algebra admits a quadratic presentation), then
\begin{equation}
\label{eq:theta-w}
\theta_q(G)=\dim W_{q-2} (G)
\end{equation}
for all $q\ge 2$. Moreover, as shown in \cite{DPS-duke}, all the components
of $\RR(G)$ are (rationally defined) linear subspaces of $V^{\vee}$.
As an immediate application of Corollary \ref{cor:strongly-separable}, we
recover the main result  of Cohen and Schenck \cite[Theorem A]{CSc-adv},
in a somewhat stronger form.

\begin{corollary}
\label{cor:cs}
Let $G$ be a $1$-formal group, and assume $\mathcal{R}(G)$ is strongly isotropic.
Denoting by $\overline{V}_1^{\vee},\dots , \overline{V}_k^{\vee}$  the (linear)
components of $\RR(G)$, we have
\[
\theta_q(G) = \sum_{t=1}^k (q-1)\binom{q+\dim \overline{V}_t-2}{ q}
\]
for all $q\gg 0$.
\end{corollary}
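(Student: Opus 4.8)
The plan is to obtain the formula directly by combining the $1$-formality identity \eqref{eq:theta-w} with the asymptotic Hilbert-series formula of Corollary \ref{cor:strongly-separable}; no new argument is required beyond checking that the hypotheses of that corollary are in force and performing a routine index shift.

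First I would invoke $1$-formality: by \eqref{eq:theta-w} (see \cites{PS-imrn, SW-forum}) we have $\theta_q(G) = \dim W_{q-2}(G)$ for all $q \ge 2$, where $W(G) = W(V,K)$ with $V = H_1(G,\k)$ and $K^\perp = \ker(\cup_G)$. Next I would verify that $W(G)$ is a strongly isotropic Koszul module in the sense of Section \ref{intro:sep-isotropic}. By \cite{DPS-duke}, all irreducible components of $\RR(G) = \RR(V,K)$ are (rationally defined) linear subspaces of $V^\vee$, so the notion of strong isotropicity is applicable, and by hypothesis each component $\overline{V}_t^\vee$ is strongly isotropic; hence so is $W(G)$. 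Corollary \ref{cor:strongly-separable} then applies and gives, for all $q \gg 0$,
\[
\dim W_q(G) = \sum_{t=1}^k (q+1)\binom{q+\dim \overline{V}_t}{q+2}.
\]
Replacing $q$ by $q-2$ and substituting into $\theta_q(G) = \dim W_{q-2}(G)$ yields
\[
\theta_q(G) = \sum_{t=1}^k (q-1)\binom{q+\dim \overline{V}_t - 2}{q}
\]
for all $q \gg 0$, which is the asserted identity; the shift clearly preserves the asymptotic range.

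I do not expect any genuine obstacle: the entire content of the statement is carried by the three inputs above --- Corollary \ref{cor:strongly-separable} (which itself rests on Theorems \ref{thm:sep->reduced} and \ref{thm:separable}), the Chen-ranks identity \eqref{eq:theta-w}, and the linearity of $\RR(G)$ from \cite{DPS-duke}. The only things worth a remark are that these inputs are legitimately available under the stated hypotheses, and that the resulting formula recovers, and slightly strengthens, \cite[Theorem A]{CSc-adv}.
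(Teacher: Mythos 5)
Your proposal is correct and is exactly the argument the paper intends: Corollary \ref{cor:cs} is stated as an immediate consequence of Corollary \ref{cor:strongly-separable} via the $1$-formality identity \eqref{eq:theta-w}, and your index shift $q\mapsto q-2$ is carried out correctly. Nothing further is needed.
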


Compared to \cite{CSc-adv}, the assumption that the components of $\mathcal{R}(G)$
be projectively disjoint is no longer needed; indeed, in view of Theorem \ref{thm:sep->reduced},
that property follows automatically from separability. Note also that, thanks to \cite{SW-forum},
we dropped the assumption that $G$ admit a commutator-relators finite presentation.

\subsection{Resonance for vector bundles on curves}
\label{intro:res-vb}
As explained in detail in \cite{AFRW}, see also \cite[Theorem 1.7]{FJP},
a major source of resonance varieties is provided by vector bundles in
algebraic geometry. To a vector bundle $E$ on a complex algebraic variety $X$,
one can associate a Koszul module $W(X,E)\coloneqq W(V,K)$ and a resonance
variety $\RR(X,E)\coloneqq \RR(V,K)$, by setting $V\coloneqq H^0(X,E)^{\vee}$ and
\[
\begin{tikzcd}[column sep=18pt]
K^{\perp}\coloneqq \ker \Bigl\{d_2 \colon \bwedge^2 H^0(X,E)\ar[r]&
H^0\bigl(X,\bwedge^2 E\bigr)\Bigr\},
\end{tikzcd}
\]
where $d_2$ is the natural map defined as follows. To any element $s\wedge s'\in\bwedge^2 H^0(X,E)$,
one associates the section $d_2(s\wedge s')\in H^0\bigl(X,\bwedge^2 E\bigr)$ whose value
at any point $x$ is precisely the vector $s(x)\wedge s'(x)$ in the fibre $E(x)$ of $E$ over $x$.
In the rank-$2$ case, $d_2$ is the determinant map and is sometimes denoted in literature
by $\det$. It was observed in \cite{AFRW} that $\RR(X,E)$ parametrizes \emph{subpencils}\/
of the vector bundle $E$, that is, line subbundles $A\hookrightarrow E$ such that
$h^0(X,A)\geq 2$. It is thus natural to seek to characterize geometrically those vector
bundles $E$ for which the resonance is strongly isotropic.

For simplicity, we assume $X$ is a smooth algebraic curve and $E$ is a rank $2$ stable
vector bundle of degree $d$ on $X$. Following Drinfeld and Laumon \cite{La}, we say
that $E$ is \emph{very stable}\/ if it has no non-zero nilpotent Higgs fields, that is, the
space $H^0\bigl(X, \omega_X\otimes \End(E)\bigr)$ contains no non-zero
nilpotent elements.  It has been proven in \cite{PPN} that $E$ is very stable if and
only if the space $H^0\bigl(X,\omega_X\otimes \End(E)\bigr)$ is closed inside
the moduli space $\mathrm{Higgs}_X(2,d)$ of rank $2$ Higgs fields on $X$.
For further connections between very stability and mirror symmetry, see \cite{HH}.
It turns out that this concept is closely related to the strong isotropicity of $\RR(X,E)$.

Given a line bundle $L$ on $X$, let $\SU_X(2,L)$ be the moduli space of semistable rank
$2$ vector bundles $E$ on $X$ with $\bwedge^2 E\cong L$. The locus of stable but not very stable
bundles on $X$ is known to be a divisor in $\SU_X(2,L)$, see \cite{PP}. For a stable vector
bundle $E\in \SU_X(2,L)$ and a  positive integer $a$, we let
\[
W^1_a(E)\coloneqq \Bigl\{A\in \Pic^a(X): h^0(X,A)\geq 2 \mbox{ and } A\hookrightarrow E\Bigr\}
\]
be the variety of degree $a$ subpencils of $E$. The following result describes
the structure of resonance varieties of rank $2$ vector bundles on curves
in terms of linear systems $\abs{A}$:

\begin{theorem}
\label{thm:sepvb2}
Fix a general curve $X$ of genus $g$ and a line bundle $L\in \Pic^d(X)$,
where $2g+2\leq d\leq 3g+1$. For a general stable rank $2$ vector bundle
$E$ on $X$ with $\bwedge^2 E\cong L$, the following hold:
\begin{enumerate}[itemsep=2.5pt,topsep=-1pt]
\item \label{higgs1}
If $d<3g+1$, then $\RR(X,E)=\{0\}$.

\item \label{higgs2} If $d=3g+1$, then $\RR(X,E)$ is strongly isotropic and moreover
\[
\Rproj(X,E)=\bigcup_{a=\lceil\frac{g+2}{2}\rceil}^{g+1}\Rproj_a(X,E),
\]
where $\Rproj_a(X,E)=\bigcup\bigl\{\abs{A}: A\in W^1_a(E)\bigr\}$
is a disjoint union of
\[
\# W^1_a(E)=\frac{2^{2a-g-2}}{g+1} \binom{g+1}{g-a+1, g-a+2, 2a-g-2}
\]
disjoint projective lines.
\end{enumerate}
\end{theorem}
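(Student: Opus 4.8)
The plan is to translate the statement about resonance into statements about subpencils via the dictionary $\RR(X,E) = \bigcup_{A} |A|$ over $A \in W^1_a(E)$, as recalled in Section \ref{intro:res-vb}, and then to carry out the relevant enumerative/Brill--Noether geometry for a general rank $2$ bundle $E \in \SU_X(2,L)$. First I would recall the identification: for a line subbundle $A \hookrightarrow E$ with $h^0(X,A) \ge 2$, the two-dimensional (or larger) space $H^0(X,A) \subseteq H^0(X,E)$ gives a linear subspace $\overline V^\vee_A \subseteq V^\vee = H^0(X,E)^\vee$ dual to the quotient $H^0(X,E) \twoheadrightarrow H^0(X,A)^\vee$, and every component of $\RR(X,E)$ arises this way. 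So the entire theorem reduces to understanding, for a general $E$, which degrees $a$ admit subpencils, how many there are, and what the dimension of $H^0(X,A)$ is (this controls whether the components are lines, i.e. whether $h^0(X,A) = 2$ exactly, so that $|A| \cong \P^1$).

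\textbf{Step 1: the range of $a$ and a dimension count.} A line subbundle $A \hookrightarrow E$ of degree $a$ corresponds to a section of $E \otimes A^{-1}$, equivalently to an extension
\[
0 \lra A \lra E \lra B \lra 0, \qquad B = L \oo A^{-1}, \ \deg B = d - a.
\]
For such subbundles to exist for general $E$ one needs, by the theory of maximal subbundles (Lange--Narasimhan / Nagata), that $a$ not be too large: the maximal degree of a line subbundle of a general stable $E$ is roughly $d/2$, and more precisely the locus of $E$ admitting a subbundle of degree $a$ has the expected codimension in $\SU_X(2,L)$. Requiring in addition $h^0(X,A) \ge 2$ imposes that $A$ move in at least a pencil; a general such $A$ has $h^0(X,A) = 2$ exactly when $a \le g+1$ (by Brill--Noether for the general curve, $W^1_a(X) \ne \emptyset$ iff $\rho(g,1,a) = g - 2(g-a+1) \ge 0$, i.e. $a \ge \lceil (g+2)/2 \rceil$, and is positive-dimensional forcing $h^0 > 2$ only when $a$ is larger). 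Combining the constraint $d = 3g+1$ with the subbundle count and the Brill--Noether count pins the range to $\lceil (g+2)/2 \rceil \le a \le g+1$; for $d < 3g+1$ the same count shows no nonzero $A$ with $h^0 \ge 2$ embeds in the general $E$, giving \eqref{higgs1}.

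\textbf{Step 2: the enumerative formula and disjointness.} For $d = 3g+1$ and $a$ in the admissible range, I would identify $W^1_a(E)$ with a fibre of an incidence correspondence: pairs $(A, E)$ with $A \in W^1_a(X)$ (the Brill--Noether locus, of dimension $\rho = 2a - g - 2$) together with an inclusion $A \hookrightarrow E$, i.e. a point of $\P H^0(X, E \oo A^{-1})$, fibered over $\SU_X(2,L)$. When $\rho = 0$, $W^1_a(X)$ is a reduced finite set of cardinality given by the classical Castelnuovo formula $\frac{g!}{(g-a+1)!\,(a-1)!} \cdot \frac{?}{?}$ — and here the appearance of $2^{2a-g-2}$ and the trinomial $\binom{g+1}{g-a+1,\,g-a+2,\,2a-g-2}$ signals that one is counting not $g^1_a$'s on $X$ but rather pairs (subpencil, choice of bundle extension class realizing it), so the count is a degree of a Grassmannian-type cycle or a multiple cover count; I would set this up as the number of ways to glue $A$ and $B = L \oo A^{-1}$ into a stable $E$, which for general $E$ is governed by $h^1(X, A^{-2} \oo L) = h^1(X, B \oo A^{-1})$ and a residual intersection computation. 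Disjointness of the projective lines $|A|$ for distinct $A$, and of the strata $\Rproj_a$ for distinct $a$, would follow from Theorem \ref{thm:sep->reduced}\eqref{t1-1} once separability is established, or directly: two distinct line subbundles $A, A'$ of a \emph{stable} $E$ cannot share a common section, since a common section would generate a rank $\le 1$ subsheaf contained in both, and $A \cap A'$ inside $E$ has degree $< a$, contradicting maximality for the general $E$ — this forces $H^0(X,A) \cap H^0(X,A') = 0$, hence $|A| \cap |A'| = \emptyset$ in $\P(V^\vee)$.

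\textbf{Step 3: strong isotropicity.} Finally, to show $\RR(X,E)$ is strongly isotropic I would verify, for each component $\overline V^\vee_A = \P H^0(X,A)$, that $\overline K_A = (\bwedge^2 \pi_A)(K) = 0$, i.e. that the map $d_2 \colon \bwedge^2 H^0(X,A) \to H^0(X, \bwedge^2 A) = 0$ is already zero because $\bwedge^2 A = 0$ for a line bundle — so isotropy ($\bwedge^2 \overline V^\vee_A \subseteq K^\perp$) is automatic — and then prove separability: $K^\perp \cap \langle \overline V^\vee_A\rangle_E \subseteq \bwedge^2 \overline V^\vee_A$. Concretely this says that any $s \wedge t \in \bwedge^2 H^0(X,E)$ with $s \in H^0(X,A)$ and $d_2(s \wedge t) = 0$ must have $t \in H^0(X,A) + \k\cdot s$; geometrically, $d_2(s \wedge t) = 0$ means $s(x) \wedge t(x) = 0$ for all $x$, so $t$ lies in the line subbundle spanned by $s$ wherever $s \ne 0$, forcing $t$ to be a section of the saturation of $\mathcal O_X \cdot s$, which is exactly $A$ (by stability, the saturation of a section's image is the maximal subpencil through it). This is where I expect the main technical work: one must check that for the \emph{general} $E$ the saturation of the subsheaf generated by $s$ is precisely one of the distinguished subpencils $A$ and that no "accidental" extra relations in $K^\perp$ appear — equivalently that the Petri-type map is injective, as hinted by the reference to Lemma \ref{lem:separable-multiplication}. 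The cleanest route is likely to invoke that lemma directly: establish the multiplicativity/Petri condition for $W(X,E)$ using that $X$ is general and $E$ general, reducing strong isotropicity to a statement about the base-point-free pencil $|A|$ on the general curve, which is the classical Petri theorem. The genericity of both $X$ and $E$ is essential throughout and is the hypothesis I would lean on hardest to kill all the error terms in the dimension counts of Steps 1 and 2 and the injectivity in Step 3.
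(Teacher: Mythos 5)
Your overall architecture matches the paper's (subpencils via $\RR(X,E)=\bigcup_A\abs{A}$, a Brill--Noether/enumerative count for $W^1_a(E)$, strong isotropicity checked componentwise via Lemma \ref{lem:separable-multiplication}), but Step 3 contains a genuine gap that sits at the heart of the theorem. Your separability argument only treats \emph{decomposable} elements $s\wedge t$ of $K^{\perp}\cap\langle H^0(X,A)\rangle_E$: for those, $d_2(s\wedge t)=0$ indeed forces $t$ into the saturation of $\mathcal{O}_X\cdot s$, hence into $H^0(X,A)$. But since $h^0(X,A)=2$, the dangerous elements have the form $s_1\wedge t_1+s_2\wedge t_2$ with $\{s_1,s_2\}$ a basis of $H^0(X,A)$, and $d_2$ of such a sum can vanish without either summand lying in $K^{\perp}$. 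This is precisely why the paper passes to the multiplication map $\mu\colon H^0(X,A)\otimes j\bigl(H^0(X,E)\bigr)\to H^0(X,L)$ and, via the Base Point Free Pencil Trick, characterizes failure of strong isotropicity by the existence of $0\neq u\in H^0(X,L-2A+F)$ with $u\cdot\bigl(H^0(X,A)\otimes H^0(X,\omega_X\otimes A^{\vee})\bigr)\subseteq\ker(e)$ (Lemma \ref{lemma:strongis}). Crucially, this condition involves the extension class $e$ of $E$, so it cannot be reduced to the classical Petri theorem for $\abs{A}$ on a general curve, as you suggest; Petri injectivity of $\mu_A$ is only an ingredient. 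The actual proof is a parameter count: the incidence variety $\Sigma_{a,b}$ of tuples $(A,F,[u],[e])$ satisfying the bad condition has dimension at most $2d-3g-6\leq 3g-4<3g-3=\dim\SU_X(2,L)$, so a general $E$ avoids it. That count --- together with the companion count $d-4\leq 3g-4<3g-3$ that gives part (1) --- is the missing engine of your proof; neither the Lange--Narasimhan bound on maximal subbundles nor Brill--Noether generality of $X$ alone rules out the bad extensions.

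Two smaller gaps: first, the formula for $\# W^1_a(E)$ is not obtained by a vague ``multiple cover count'' --- in the paper it is a Porteous computation of the class of the degeneracy locus $W^1_a(E)\subseteq W^1_a(X)$ (Proposition \ref{prop:twistedBN}), which for $d=3g+1$ is a multiple of $\theta^{g}$ whose degree on $\Pic^a(X)$ yields the stated number. Second, for that class to compute the actual cardinality you must know $W^1_a(E)$ is smooth and zero-dimensional, which the paper deduces \emph{from} strong isotropicity (Theorem \ref{thm:equivalences}), not independently of it; your incidence-correspondence remark does not supply this. On the positive side, your direct disjointness argument (distinct saturated line subbundles of a stable bundle share no nonzero section) is correct and is an acceptable substitute for the paper's appeal to separability.
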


For a general stable bundle $E$ of degree $3g+1$ with fixed determinant as in
part \eqref{higgs2}, observe that $\mu(\omega_X\otimes E^{\vee})\leq g-1$,
therefore $h^1(X,E)=0$ and then by Riemann--Roch we have that
$h^0(X,E)=g+3$. The strong isotropicity of $\RR(X,E)$ implies that the
intersection $\P\ker(d_2)\cap \Grass_2\bigl(H^0(X,E)\bigr)$ consists of
\[
\deg \Grass_2(g+3)=\frac{(2g+2)!}{(g+1)!\cdot (g+2)!}=
\sum_{a=\lceil\frac{g+2}{2}\rceil}^{g+1}
\frac{2^{2a-g-2}}{g+1}\binom{g+1}{g-a+1, g-a+2,2a-g-2}
\]
reduced points. Furthermore, when $\deg(E)>3g+1$, the resonance
$\RR(X,E)$ is no longer linear, see Remark \ref{rmk:nonruled}. The reason in
the statement of Theorem \ref{thm:sepvb2} we restrict to the case $d\geq 2g+2$,
is that when $d<2g+2$, then $h^0(X,E)\leq 3$ and the statement becomes trivial.
In the particular case $a=g+1$ we have a more precise result that holds for every
very stable vector bundle, rather than for a general one.

\begin{corollary}
\label{cor:Vafa}
Let $X$ be a general curve of genus $g$ and let $E$ be a very stable vector bundle
of degree $3g+1$. Then $\Rproj_{g+1}(X,E)$ consists of $2^g$ disjoint projective lines.
\end{corollary}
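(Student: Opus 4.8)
The plan is to use the description $\Rproj_{g+1}(X,E)=\bigcup\{|A|:A\in W^1_{g+1}(E)\}$ from Theorem~\ref{thm:sepvb2} and to analyse the scheme $W^1_{g+1}(E)$ directly for an \emph{arbitrary} very stable $E$. Since $\deg A=g+1$ forces $h^0(X,A)\ge 2$ by Riemann--Roch, $W^1_{g+1}(E)$ is nothing but the locus of degree-$(g+1)$ line subbundles of $E$, i.e.\ (an open part of) the Quot scheme of rank-$1$ locally free quotients of $E$ of degree $2g$, equivalently the Hilbert scheme of sections of $\P(E)\to X$. It will be enough to prove: (i) $W^1_{g+1}(E)$ is reduced and $0$-dimensional; (ii) it consists of exactly $2^g$ points; (iii) each $A\in W^1_{g+1}(E)$ has $h^0(X,A)=2$, so that $|A|$ is a line in $\P H^0(X,E)$; and (iv) the lines $|A|$ are pairwise disjoint.

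For (i): at a point $A\hookrightarrow E$ the tangent space to $W^1_{g+1}(E)$ is $H^0\bigl(X,\mathcal{H}om(A,E/A)\bigr)=H^0(X,L\otimes A^{-2})$, a line bundle of degree $g-1$ and Euler characteristic $0$; by Serre duality it is dual to $H^1(X,\omega_X\otimes A^2\otimes L^{-1})$. Now I would invoke the standard dictionary between nonzero nilpotent Higgs fields on $E$ and pairs consisting of a line subbundle $A'\hookrightarrow E$ and a nonzero section of $\omega_X\otimes (A')^2\otimes L^{-1}$ (such a section gives the nilpotent map $E\twoheadrightarrow E/A'\to A'\otimes\omega_X\hookrightarrow E\otimes\omega_X$). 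Very stability of $E$ thus forces $H^0(X,\omega_X\otimes A^2\otimes L^{-1})=0$; since the Euler characteristic is $0$ this forces $H^1=0$ as well, hence $H^0(X,L\otimes A^{-2})=0$, and $W^1_{g+1}(E)$ is smooth of dimension $0$ at $[A]$.

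For (ii): by Theorem~\ref{thm:sepvb2}\eqref{higgs2} the count equals $2^g$ for a general $E$, which is in particular very stable, so it suffices to show the count is locally constant over the open dense locus $B$ of very stable bundles in the moduli space. The Higgs-field dictionary again shows that a very stable $E$ has no line subbundle of degree $\ge g+2$ (for such an $A$ one has $\deg(\omega_X\otimes A^2\otimes L^{-1})\ge g$, hence a nonzero section), while the classical fact that a rank-$2$ bundle of degree $d$ carries a line subbundle of degree $\ge\lceil (d-g)/2\rceil$ --- which equals $g+1$ for $d=3g+1$ --- produces one of degree $\ge g+1$; hence the maximal degree of a line subbundle of $E$ is exactly $g+1$, and $W^1_{g+1}(E)$ is the nonempty scheme of maximal sub-line-bundles. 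Form the relative Quot scheme $\mathcal Q\to B$; over $B$ no universal quotient can carry torsion (that would yield a line subbundle of degree $>g+1$), so the locally free locus $\mathcal W\subseteq\mathcal Q$ equals $\mathcal Q|_B$ and is proper over $B$; by (i) its fibres are $0$-dimensional and, by the same cohomology vanishing, $\mathcal W$ is smooth of dimension $\dim B$ and $\mathcal W\to B$ is unramified, hence finite and, over the smooth base $B$, \'etale. Its degree is therefore locally constant and equal to $2^g$ (working \'etale-locally on $B$ to have a universal bundle; alternatively, this number is a fixed intersection number of Vafa--Intriligator type on the Quot scheme, manifestly independent of $E$).

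For (iii) and (iv): I would check that each $A\in W^1_{g+1}(E)$ is non-special, $h^0(X,A)=2$ --- if $h^0(X,A)\ge 3$ then $A$ is special, and one derives a contradiction with the absence of nilpotent Higgs fields (combined with a Clifford-type estimate on sub-line-bundles) --- so that $|A|=\P H^0(X,A)$ is a projective line in $\P H^0(X,E)$; and if two non-isomorphic $A,A'\in W^1_{g+1}(E)$ shared a nonzero section $s$ inside $H^0(X,E)$, then the divisor of $s$ read off inside $A$, resp.\ $A'$, would give $A\cong\mathcal O_X(\mathrm{div}\,s)\cong A'$, a contradiction; so the $2^g$ lines are pairwise disjoint and $\Rproj_{g+1}(X,E)=\bigsqcup_{A\in W^1_{g+1}(E)}|A|$. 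The main obstacle I anticipate is step (ii): turning the count $2^g$ into a genuine deformation invariant, i.e.\ establishing that $\mathcal W\to B$ is proper by ruling out degenerations of degree-$(g+1)$ sub-line-bundles into sub-line-bundles of higher degree --- exactly the point at which very stability (through the bound on the Segre invariant and the nilpotent-Higgs-field criterion) is indispensable; step (iii) is the other delicate point.
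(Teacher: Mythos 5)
Your overall strategy coincides with the paper's: both reduce the statement to showing that $W^1_{g+1}(E)$ is smooth, zero-dimensional and reduced, with the smoothness coming from very stability, and then extract the count $2^g$ from an intersection-theoretic formula. Your tangent-space computation is a clean repackaging of the paper's: the paper (Proposition \ref{prop:twistedBN}\eqref{biii} and Theorem \ref{thm:equivalences}\eqref{ist2}) identifies $T_{[A]}W^1_{g+1}(E)$ with the annihilator of $\im(\mu_A)+\im(\beta_{E,A})$ in $H^0(X,\omega_X)^{\vee}$ and uses very stability to make $\beta_{E,A}$ surjective, while you compute the Quot-scheme tangent space $H^0(X,L\otimes A^{-2})$ and kill it via Serre duality together with the identification of $H^0(X,\omega_X\otimes A^2\otimes L^{-1})=\Hom(L\otimes A^{\vee},\omega_X\otimes A)$ with nilpotent Higgs fields killing $A$; these are literally the same vanishing. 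Where you genuinely diverge is the enumeration: the paper evaluates the Porteous class of Proposition \ref{prop:twistedBN}\eqref{bii} at $a=g+1$, $d=3g+1$ (giving $2^g\theta^g/g!=2^g$), which applies to \emph{any} $E$ for which the degeneracy locus has the expected dimension zero --- exactly what your step (i) establishes. Your relative Quot scheme over the very stable locus, with the properness and \'etaleness analysis, is therefore correct but unnecessary; the parenthetical remark at the end of your step (ii) (a fixed intersection number independent of $E$ once the expected dimension holds) is the efficient route and is the one the paper takes.

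The one place where your write-up has a real gap is step (iii). For $\Rproj_{g+1}(X,E)$ to be a union of \emph{lines} you must show $h^0(X,A)=2$ for every $A\in W^1_{g+1}(E)$, and your one-line sketch ("$A$ special, contradiction with absence of nilpotent Higgs fields combined with a Clifford-type estimate") is not an argument: very stability only gives $H^0(X,\omega_X\otimes A^{2}\otimes L^{-1})=0$ for a line bundle of degree $g-1$, and this does not visibly conflict with $h^1(X,A)>0$. The paper's own mechanism for excluding subpencils with $h^0>2$ is the dimension count of Lemma \ref{lemma:nohigher}, which covers only a \emph{general} $E$, so if you want the corollary for an arbitrary very stable $E$ you need to supply this step (or restrict to general $E$ and invoke Theorem \ref{thm:sepvb2}\eqref{higgs2} directly). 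Your disjointness argument (iv) is fine, but note that it needs the uniqueness of the embedding $A\hookrightarrow E$, i.e.\ $h^0(X,E\otimes A^{\vee})=1$; this does follow from your step (i), since $h^0(X,E\otimes A^{\vee})\geq 2$ would force $h^0(X,L\otimes A^{-2})\neq 0$ via the sequence $0\to\mathcal{O}_X\to E\otimes A^{\vee}\to L\otimes A^{-2}\to 0$ (compare Corollary \ref{cor:pencilsrig}), so it is worth making that explicit.
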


Note that the number of maximal line subbundles of a vector bundle has been the
subject of study in enumerative geometry \cites{LN, Oxb} and recently in the context
of Tevelev degrees \cite{FL}.

\subsection{Resonance of Kodaira fibrations}
\label{intro:kodaira}

A Kodaira fibration is a submersion $f\colon X\rightarrow B$ from a smooth algebraic
surface to a smooth projective curve $B$ of genus $b\geq 2$, such that all fibres of $f$
are smooth curves of genus $g$ varying in moduli. Equivalently, $f$ corresponds to a
morphism $B\rightarrow M_g$ into the moduli stack of smooth genus $g$ curves.
Kodaira fibrations are objects of intense study in algebraic geometry \cites{CR, Kod},
in the theory of surface bundles \cites{CFT, Kas, ChenLei, Salter}, and in geometric group
theory \cite{LIPy}. It is an open question posed independently by Catanese and Salter
whether there are algebraic surfaces that admit more than two Kodaira fibration structures.
Our  Theorem \ref{thm:separable} turns out to be useful in this context.
We have the following result describing the resonance of double Kodaira fibrations
(for the case of surfaces admitting a unique Kodaira fibration structure,
see Lemma \ref{lemma:complint}).

\begin{theorem}
\label{thm:koddouble}
Let $X$ be a compact algebraic surface which admits two independent Kodaira
fibrations, $\Sigma_{g_1}\hookrightarrow X\xrightarrow{f_1} B_1$ and
$\Sigma_{g_2}\hookrightarrow X\xrightarrow{f_2} B_2$.
Assume that the product map $f=(f_1,f_2)\colon X\rightarrow B_1\times B_2$
induces an isomorphism on $H^1(-,\C)$ and a monomorphism on $H^2(-,\C)$.
Then
\begin{enumerate}[itemsep=3pt, topsep=-1pt]
\item
The resonance scheme $\RR(X)=f_1^*H^1(B_1,\C)\cup f_2^*H^1(B_2,\C)$
is separable.
\item
The Chen ranks of $\pi_1(X)$ are given by the following formula, for $q\gg 0$,
\[
\theta_q\bigl(\pi_1(X)\bigr)=(q-1)\left(\binom{2b_1+q-2}{q}+
\binom{2b_2+q-2}{q}\right)-\binom{2b_1+q-3}{q-2}-\binom{2b_2+q-3}{q-2}.
\]
\end{enumerate}
\end{theorem}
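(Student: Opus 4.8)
The plan is to reduce everything to an application of Theorem \ref{thm:separable} once we have identified the resonance scheme of $X$ and checked separability. First I would pin down the cohomology data: since $f=(f_1,f_2)$ induces an isomorphism on $H^1(-,\C)$, we may identify $V^\vee = H^1(X,\C)$ with $H^1(B_1,\C)\oplus H^1(B_2,\C)$ via the pullbacks $f_1^*$, $f_2^*$. Write $\overline V_1^\vee = f_1^*H^1(B_1,\C)$ and $\overline V_2^\vee = f_2^*H^1(B_2,\C)$, of dimensions $2b_1$ and $2b_2$. Because each $B_i$ is a curve of genus $b_i\geq 2$, the cup product $\bwedge^2 H^1(B_i,\C)\to H^2(B_i,\C)\cong\C$ is the symplectic form, so $H^1(B_i,\C)$ is a resonance component whose only relations come from that single symplectic class; concretely $K_i^\perp = \ker(\bwedge^2 H^1(B_i,\C)\to H^2(B_i,\C))$ is a hyperplane, and the associated Koszul module $W(\overline V_i,\overline K_i)$ is exactly the one studied in \cite{AFPRW} for a symplectic space. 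Using that $f^*$ is a monomorphism on $H^2$, the cup product $\cup_X\colon\bwedge^2 H^1(X,\C)\to H^2(X,\C)$ has the same kernel as $\cup_{B_1\times B_2}$ on $H^1(B_1\times B_2,\C)$; by the Künneth formula the latter kernel is $K_1^\perp\oplus K_2^\perp$ inside $\bwedge^2\overline V_1^\vee\oplus\bwedge^2\overline V_2^\vee\subseteq\bwedge^2 V^\vee$ — there are no surviving cross terms $\overline V_1^\vee\wedge\overline V_2^\vee$ because those map isomorphically onto $H^1(B_1,\C)\otimes H^1(B_2,\C)\subseteq H^2(B_1\times B_2,\C)$. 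This computes $K^\perp$ and shows set-theoretically that $\RR(X)=\overline V_1^\vee\cup\overline V_2^\vee$, with the components linear but not disjoint (they meet only at the origin, but projectively $\Rproj(X)$ is a union of two disjoint linear subspaces — wait, they do intersect only at $0$, so projectively disjoint; this is consistent with separability).

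Next I would verify separability of $W(X,K)$ in the sense of \eqref{eq:sep}. Applied to each component $\overline V_t^\vee$, separability requires $K^\perp\cap\langle\overline V_t^\vee\rangle_E\subseteq\bwedge^2\overline V_t^\vee$. An element of $\langle\overline V_t^\vee\rangle_E$ in degree $2$ is of the form $\xi\wedge\eta$ with $\xi\in\overline V_t^\vee$ and $\eta\in V^\vee=\overline V_1^\vee\oplus\overline V_2^\vee$; write $\eta=\eta_1+\eta_2$. If $\xi\wedge\eta\in K^\perp = K_1^\perp\oplus K_2^\perp$, then its component in $\overline V_t^\vee\wedge\overline V_s^\vee$ for $s\neq t$ must vanish; but that component is $\xi\wedge\eta_s$, which lies in the Künneth summand that injects into $H^2(X,\C)$, so $\xi\wedge\eta_s=0$, forcing $\eta_s$ to be proportional to $\xi$ — impossible unless $\eta_s=0$ since $\xi$ and $\eta_s$ lie in complementary summands. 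Hence $\eta\in\overline V_t^\vee$ and $\xi\wedge\eta\in\bwedge^2\overline V_t^\vee$, giving separability. (One must be a little careful: a general element of $K^\perp\cap\langle\overline V_t^\vee\rangle_E$ is a sum of decomposables $\xi_j\wedge\eta_j$, so the argument should be run on the whole subspace — project $K^\perp\cap\langle\overline V_t^\vee\rangle_E$ onto the cross term $\overline V_t^\vee\otimes\overline V_s^\vee$ under the iso with a Künneth piece of $H^2(X,\C)$, observe this composite is zero, and conclude the subspace lies in $\bwedge^2\overline V_t^\vee\oplus(\text{stuff in }\bwedge^2\overline V_s^\vee)$; intersecting with $\langle\overline V_t^\vee\rangle_E$ and using that $\langle\overline V_t^\vee\rangle_E\cap\bwedge^2\overline V_s^\vee=0$ kills the second piece.) This establishes part (1).

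For part (2), I would invoke the $1$-formality of $\pi_1(X)$ — Kodaira fibration surfaces are compact Kähler, hence their fundamental groups are $1$-formal by Deligne–Griffiths–Morgan–Sullivan — so \eqref{eq:theta-w} gives $\theta_q(\pi_1(X))=\dim W_{q-2}(X)$ for all $q\geq 2$. Now apply Theorem \ref{thm:separable}: since $W(X,K)$ is separable with components $\overline V_1^\vee$ and $\overline V_2^\vee$, for $q\gg 0$ we get $\dim W_q(X,K)=\dim W_q(\overline V_1,\overline K_1)+\dim W_q(\overline V_2,\overline K_2)$. It remains to compute $\dim W_q(\overline V_i,\overline K_i)$ where $\overline K_i$ is the image of $K$ and $\overline K_i^\perp$ is a hyperplane in $\bwedge^2\overline V_i^\vee$ — equivalently $\overline K_i$ is a hyperplane in $\bwedge^2\overline V_i$, the annihilator of a symplectic form on $\overline V_i$ of dimension $2b_i$. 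This is precisely the ``generic'' Koszul module whose Hilbert function is computed in \cite{AFPRW} (the resonance is $\{0\}$ and the module has finite length, but its graded dimensions are given by an explicit binomial expression in the stable range); the formula there yields $\dim W_q(\overline V_i,\overline K_i)=(q+1)\binom{q+2b_i}{q+2}-\binom{q+2b_i-1}{q}$ — that is, the Koszul cohomology $\dim K_{q,?}$ of $\bwedge^2\overline V_i^\vee$ modulo the symplectic relation. Substituting $q\leadsto q-2$ and summing over $i=1,2$ gives the stated formula for $\theta_q(\pi_1(X))$. The main obstacle is the bookkeeping in the separability verification — correctly handling arbitrary (non-decomposable) elements of $K^\perp$ and making sure the Künneth decomposition of $H^2(X,\C)$ is used cleanly — together with matching the combinatorial Koszul-module formula from \cite{AFPRW} to the precise binomial expression claimed; both are routine but require care with indices.
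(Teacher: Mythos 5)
Your part (1) is essentially the paper's argument: the K\"unneth decomposition shows $K^{\perp}=\ker(\cup_{B_1})\oplus\ker(\cup_{B_2})$ with no surviving cross terms, whence $\RR(X)=\overline V_1^{\vee}\cup\overline V_2^{\vee}$; you then check separability directly from the definition \eqref{eq:sep} on the $K^{\perp}$ side, whereas the paper checks the equivalent primal condition (surjectivity of $p_M$ in Lemma \ref{lem:alt-sep}) using $U_1\otimes U_2\subseteq K$. Both work, and your version is if anything more immediate: once $K^{\perp}\subseteq\bwedge^2\overline V_1^{\vee}\oplus\bwedge^2\overline V_2^{\vee}$ is known, intersecting with $\langle\overline V_t^{\vee}\rangle_E$ lands in $\bwedge^2\overline V_t^{\vee}$ at once, and your parenthetical remark about non-decomposable elements handles the only point requiring care.

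In part (2) the overall structure ($1$-formality of K\"ahler groups, \eqref{eq:theta-w}, Theorem \ref{thm:separable}, then the graded dimensions of $W(\overline V_i,\overline K_i)$) is correct, and the binomial expression you write for $\dim W_q(\overline V_i,\overline K_i)$ is numerically the right one. But your justification of that expression is wrong. Here $W(\overline V_i,\overline K_i)$ is the Koszul module of the surface group $\Pi_{b_i}$: $\overline K_i$ is the \emph{line} spanned by the symplectic element (not a hyperplane), $\overline K_i^{\perp}$ is the hyperplane $\ker(\cup_{B_i})$, and consequently --- by Example \ref{ex:isotropic-res}, since $1\le\dim\overline K_i\le 2b_i-2$ --- the resonance $\RR(\overline V_i,\overline K_i)$ is all of $\overline V_i^{\vee}$, not $\{0\}$, and the module has polynomial growth, not finite length. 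Your own formula is nonzero for all $q$, contradicting the finite-length claim, so the generic/vanishing-resonance computation of \cite{AFPRW} is not the relevant input. The correct source is the Chen rank formula \eqref{eq:surface-chenranks} for surface groups, which the paper derives from the Hilbert series of the infinitesimal Alexander invariant of $\Pi_g$ \cite[Theorem 7.3]{PS-imrn}. With that substitution the computation closes exactly as you intend.
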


We explain in Section \ref{subsec:a-k} how the hypothesis of Theorem \ref{thm:koddouble}
are verified for the Atiyah--Kodaira surfaces constructed in \cites{At, Kod}; consequently,
the theorem provides a formula for the Chen ranks (in large degrees) of these surfaces.
Finally, we also conjecture in \S\ref{subsec:a-k} that the projective resonance $\Rproj(G)$
of any K\"ahler group $G$ is reduced and that a ``Chen ranks formula" in the spirit
of Theorem \ref{thm:separable} and generalizing the one in Theorem \ref{thm:koddouble}
always holds.

In this paper we refrain from studying the Koszul modules and the Chen ranks of
hyperplane arrangements. They will form the subject of the forthcoming paper \cite{AFRS-effective}.

\begin{ack}
We thank the referee for a careful reading of the paper and for valuable comments and
suggestions that have led to improvements in the exposition.

{\small{
Aprodu  was supported by the Romanian Ministry of Research and Innovation,
CNCS -- UEFISCDI, grant PN-III-P4-ID-PCE-2020-0029, within PNCDI III.
and by the PNRR grant CF 44/14.11.2022 {\em Cohomological Hall algebras
of smooth surfaces and applications}.
Farkas was supported by the DFG Grant \emph{Syzygien und Moduli} and by the
ERC Advanced Grant SYZYGY.
Raicu was supported by the NSF Grants No.~1901886 and~2302341.
Suciu was supported in part by Simons Foundation Collaboration Grants for
Mathematicians \#354156 and \#693825.
This project has received funding from the European Research Council (ERC)
under the European Union Horizon 2020 research and innovation program
(grant agreement No. 834172).
}}
\end{ack}

\section{Koszul modules and resonance schemes}
\label{sec:defs}

\subsection{Koszul modules}
\label{subsec:Koszul}

We now recast some of the notions introduced in the previous section in
a more convenient setting, following the approach adopted in \cite{PS-crelle}
and developed in \cites{AFPRW,AFPRW2, AFRW}.

Once again, let $V$ be a finite-dimensional vector space over
an algebraically closed field $\k$ of characteristic $0$,
and let $K\subseteq\bwedge^2 V$ be a subspace.  We also let
$K^{\perp}\subseteq \bigwedge^2 V^{\vee}$ be the
subspace of all linear functionals
vanishing on $K$. Unless otherwise specified, all tensor products
will be over $\k$.

Set  $S\coloneqq \Sym(V)$.
Upon picking a basis for $V$, this algebra may be identified with the
polynomial ring $\k[x_1,\dots,x_n]$, where $n=\dim V$.   Let
$\bigl(\bigwedge^{\bullet} V \otimes S, \delta\bigr)$ be the
corresponding Koszul complex. As already mentioned, the \defi{Koszul module}\/ $W(V,K)$
is the middle homology of the chain complex
\begin{equation}
\label{eq:def-WVK}
\begin{tikzcd}[column sep=22pt]
K \oo S \ar[rr, "\left. \delta_2\right|_{K \oo S}"] && V\oo S(1) \ar[r, "\delta_1"] & S(2).
\end{tikzcd}
\end{equation}

It is readily
seen that $W(V,K)$ is the cokernel of the $S$-linear map
\begin{equation}
\label{eq:presentation}
\begin{tikzcd}[column sep=18pt]
\bigl(\bwedge^3V\oplus K\bigr)\otimes  S \ar[r]&  \bwedge^{2} V\otimes S
\end{tikzcd}
\end{equation}
with matrix
$\delta_3+\iota \otimes \id_S$, where $\iota\colon K\inj \bwedge^2 V$ is
the inclusion map. One has an alternate presentation of $W(V,K)$ as a
cokernel of a morphism of graded $S$-modules:
\begin{equation}
\label{eq:presentation2}
\begin{tikzcd}[column sep=18pt]
\bwedge^3V \otimes S(-1) \ar[r]&  \Bigl(\bwedge^{2} V/K\Bigr)\otimes S
\ar[r]& W(V,K) \ar[r]& 0
\end{tikzcd}
\end{equation}

A $\k$-linear map $\varphi\colon V\to \ol{V}$  induces a linear map
$\bwedge^2 \varphi\colon \bwedge^2 V\to \bwedge^2 \ol{V}$
and we let $\ol{K}$ be the image of $K$ under this map.
Writing $\ol{S}\coloneqq \Sym(\ol{V})$, the map
$\varphi$ induces a ring morphism
$\pi\colon S\to \ol{S}$.  Denoting by $\widetilde{W}(\ol{V},\ol{K})$
the cokernel of the composed map
\[
\begin{tikzcd}[column sep=18pt]
(\bwedge^3V\oplus K)\otimes  S \ar[r]&  \bwedge^{2} V\otimes S
\ar[r]& \bwedge^{2} \ol{V}\otimes S
\end{tikzcd}\] we obtain a morphism
of graded $S$-modules (which by abuse of notation
we also call $\varphi$),
\begin{equation}
\label{eq:phi-W}
\begin{tikzcd}[column sep=20pt]
\varphi \colon W(V,K) \ar[r, two heads]& \widetilde{W}(\ol{V},\ol{K}) .
\end{tikzcd}
\end{equation}
Clearly, $W(\ol{V},\ol{K})=\widetilde{W}(\ol{V},\ol{K})\otimes _S\ol{S}$.
Moreover, if $\varphi\colon V\to \ol{V}$ is surjective,
then the morphism \eqref{eq:phi-W} is also surjective.

\subsection{Koszul modules and differentials}
\label{subsec:WVK-diffs}
Next, we explain the relationship between Koszul modules
and the sheaf $\Omega=\Omega^1_{\P}$ of differential forms
on the projective space $\P\coloneqq \P(V^{\vee})$.
Consider the Euler sequence
\begin{equation}
\label{eq:Euler}
\begin{tikzcd}[column sep=16pt]
0 \ar[r]& \Omega  \ar[r]& V \oo \mc{O}_{\P}(-1)  \ar[r]& \mc{O}_{\P}  \ar[r]& 0\, .
\end{tikzcd}
\end{equation}

If $p=[f]\in\P$, where $0\neq f\in V^{\vee}$, then the restriction of \eqref{eq:Euler}
to $p$ identifies with
\begin{equation*}
\label{eq:Om-on-fiber}
\begin{tikzcd}[column sep=16pt]
 0 \ar[r]& \ker(f) \ar[r]& V \ar[r, "f"]& \k \ar[r]& 0 \, .
\end{tikzcd}
\end{equation*}

Using the left exactness of the global sections functor, and the fact that
$W(V,0) = \ker(\delta_1)$ (as follows from \eqref{eq:def-WVK}), we obtain from
\eqref{eq:Euler} that
\begin{equation}
\label{eq:wqv0}
W_q(V,0) = H^0\bigl(\P,\Omega(q+2)\bigr).
\end{equation}
for all $q\in\bb{Z}$. Using \eqref{eq:def-WVK}, we can also write
\begin{equation}
\label{eq:WVK-quot-WV0}
\begin{tikzcd}[column sep=16pt]
 W_q(V,K) = \coker\Bigl\{ K \oo \Sym^q V \ar[r]& W_q(V,0)\Bigr\}.
 \end{tikzcd}
 \end{equation}

The graded $S$-module $W(V,K)$ induces a coherent sheaf $\W(V,K)$ on
the projective space $\P= \P\bigl(V^{\vee}\bigr)$. We refer to $\W(V,K)$,
as the \defi{Koszul sheaf} associated with the pair $(V,K)$.

From the natural surjection $\bwedge^2 V \oo \mc{O}_{\P}(-2) \onto \Omega$,
coupled with the inclusion $K\subseteq\bwedge^2 V$, we obtain a map
$K\oo \mc{O}_{\P}(-2) \to \Omega$, which, upon twisting, taking global
sections, and using \eqref{eq:WVK-quot-WV0} yields the identification
\begin{equation*}
\label{eq:wqvk-h0}
\begin{tikzcd}[column sep=16pt]
W_q(V,K) = \coker \Bigl\{
H^0\bigl(\P,K\oo \mc{O}_{\P}(q)\bigr) \ar[r]& H^0\bigl(\P,\Omega(q+2)\bigr) \Bigr\},
\end{tikzcd}
\end{equation*}
for all $q\in\Z$. Accordingly, the associated Koszul sheaf can be realized as
\begin{equation}
\label{eq:wqvk-op}
\begin{tikzcd}[column sep=16pt]
\W(V,K) = \coker\Bigl\{K\oo \mc{O}_{\P} \ar[r]& \Omega(2)\Bigr\},
\end{tikzcd}
\end{equation}
and in particular we have the identification $\W(V,0)=\Omega(2)$
of sheaves on $\P$.

If $i\colon \ol{V}^{\vee}\subseteq V^{\vee}$ is a linear subspace,
let $\pi\colon V\surj \ol{V}$ be the dual map.
Set $\ol{K}\coloneqq \bwedge^2 \pi(K)$. By the discussion above, we have
a surjective morphism of graded $S$-modules,
\begin{equation}
\label{eq:onto-pi-W}
\begin{tikzcd}[column sep=20pt]
\pi \colon W(V,K) \ar[r, two heads]& \widetilde{W}(\ol{V},\ol{K}) \, .
\end{tikzcd}
\end{equation}
The morphism $\pi$ from \eqref{eq:onto-pi-W} corresponds
to a surjective morphism of coherent $\mc{O}_{\P}$-sheaves,
\begin{equation}
\label{eq:onto-pi-W-sheaf}
\begin{tikzcd}[column sep=20pt]
\pi \colon \W(V,K) \ar[r, two heads]& i_* \W(\ol{V},\ol{K}) \, .
\end{tikzcd}
\end{equation}

\subsection{The presentation of the Koszul sheaf}
\label{subsec:stalks-koszul}
Let $\ol{\P} = \mathbf{P}\bigl(\ol{V}^{\vee}\bigr) \hookrightarrow
\P=\mathbf{P}\bigl(V^{\vee}\bigr)$ be the inclusion map and let $p$
be a point in $\ol{\P}$.
Write $p=[e_1]$ for $0\neq e_1\in \ol{V}^{\vee}$ and complete $\{e_1\}$ to
bases of $\ol{V}^{\vee}$ and $V^{\vee}$.  We let $X_1,\ldots,X_n$
denote the homogeneous coordinates on $\P$ corresponding to the choice of
basis for $V^{\vee}$, and we let $x_i \coloneqq X_i/X_1$ denote the local coordinates at
the point $p$. Letting $A=\mc{O}_{\P,p}$ denote the local ring of $\P$ at $p$,
we have
\begin{equation}
\label{eq:akk}
A = \k[x_2,\ldots,x_n]_{\mf m},
\end{equation}
where $\mf{m} = (x_2,\ldots,x_n)$. Using \eqref{eq:Euler}, we have that the
stalk of $\Omega$ at $p$ can be described as
\begin{equation}
\label{eq:omegap}
\Omega_p = \bigoplus_{j=2}^n A\cdot dv_j,
\end{equation}
where $dv_j \coloneqq v_j - x_j\cdot v_1$. Here we identify $v_j$ and
$v_j\otimes \frac{1}{X_1}\in V\otimes \mathcal{O}_{\P,p}(-1)$.

\begin{proposition}
\label{prop:stalk-wp}
When viewed as an $A$-module, the stalk of the Koszul sheaf
$\W=\mc{W}(V,K)$ at the point $p\in\ol{\P}$ has presentation
\begin{equation}
\label{eq:k-otimes-a}
\begin{tikzcd}[column sep=22pt]
K \oo A \ar[r, "\pd"]& \Omega_p \ar[r, "\nu", two heads]& \W_p \, ,
\end{tikzcd}
\end{equation}
where $\pd$ is the restriction to $K\oo A$ of the $A$-linear map
$\delta_2\colon \bwedge^2 V\oo A \to \Omega_p$ by
\begin{gather}
\begin{aligned}
\label{eq:def-pd}
\delta_2(v_s\wedge v_t)
&= x_s\cdot dv_t - x_t\cdot dv_s
&&\text{for $2\le s,t\le n$},\\
\delta_2(v_1\wedge v_t)
&=dv_t
&&\text{for $2\le t \le n$}.
\end{aligned}
\end{gather}
\end{proposition}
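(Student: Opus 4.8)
The plan is to obtain the presentation \eqref{eq:k-otimes-a} by localizing the global presentation \eqref{eq:wqvk-op} of the Koszul sheaf, namely $\W(V,K) = \coker\{K\oo \mc{O}_{\P} \to \Omega(2)\}$, at the point $p$, and then to make the resulting map $K\oo A \to \Omega_p$ explicit in the local coordinates $x_2,\dots,x_n$. First I would recall that since twisting by $\mc{O}_{\P}(2)$ is an invertible operation and cokernels commute with localization (in fact with any flat base change, and in particular with taking stalks), the stalk $\W_p$ is the cokernel of the induced $A$-linear map $K\oo A \to \Omega_p$ obtained by restricting the sheaf map $K\oo \mc{O}_{\P}(-2) \to \Omega$ to $p$ and untwisting; this immediately gives the shape of \eqref{eq:k-otimes-a} with $\nu$ the canonical surjection, and it only remains to identify the map $\pd$ with the claimed formulas \eqref{eq:def-pd}.

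For that identification, I would trace through the construction of the sheaf map. It is induced by the Koszul differential $\delta_2\colon \bwedge^2 V \to V\oo \mc{O}_{\P}(1)$ composed with the surjection $V\oo\mc{O}_{\P}(-1)\surj \mc{O}_{\P}$ from the Euler sequence \eqref{eq:Euler}, so concretely on a basis bivector $v_s\wedge v_t$ one has $\delta_2(v_s\wedge v_t\oo f) = v_t\oo v_s f - v_s\oo v_t f$, and I must push this into $\Omega_p$ using the local description \eqref{eq:omegap}, i.e. $\Omega_p = \bigoplus_{j\ge 2} A\cdot dv_j$ with $dv_j = v_j - x_j v_1$. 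The point is that in the stalk the element $v_1$ is identified (after clearing $X_1$) with the unit section, and $v_j$ maps to $dv_j$ while $v_1$ maps to $0$ in $\Omega_p$; more precisely $v_j \equiv dv_j$ and $x_j v_1 \equiv 0$ modulo the image of the dual of $\mc{O}_{\P}\to 0$, and $v_1\equiv$ (the chosen trivialization) so that $v_j$ itself projects to $dv_j$. Carrying out this substitution on $\delta_2(v_s\wedge v_t) = v_t\cdot v_s - v_s\cdot v_t$ (with the local coordinate functions $x_s = X_s/X_1$ playing the role of the "value" of $v_s$ along the pencil, since $v_s \mapsto x_s\cdot v_1$ in the quotient $\mc{O}_{\P}$) yields $\delta_2(v_s\wedge v_t) = x_s\, dv_t - x_t\, dv_s$ for $2\le s,t\le n$, and the special case $s=1$ gives $\delta_2(v_1\wedge v_t) = dv_t$ because $x_1 = X_1/X_1 = 1$ and $dv_1 = v_1 - x_1 v_1 = 0$; this is exactly \eqref{eq:def-pd}. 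Then $\pd$ is simply the restriction of this $\delta_2$ to the subspace $K\oo A \subseteq \bwedge^2 V\oo A$, using the inclusion $K\inj\bwedge^2 V$.

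The main obstacle I anticipate is bookkeeping with the twists and the identifications of sections: one must be careful about the precise isomorphism $\Omega_p \cong \bigoplus A\, dv_j$ coming from \eqref{eq:Euler}, about the convention in $\delta_2(v_1\wedge v_2\oo f) = v_2\oo v_1 f - v_1\oo v_2 f$, and about the fact that "evaluating $v_s$ at the point $p=[e_1]$" along the standard affine chart replaces $v_s$ by its $x_s$-coefficient. There is also a sign/ordering issue to keep consistent between the formula given for $\delta_2$ in the introduction and the one appearing here. None of this is conceptually hard, but it is the one place where an off-by-a-sign or a misplaced twist would break the statement, so I would verify the formulas on a small example, e.g. $n=3$, before asserting the general case. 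Once \eqref{eq:def-pd} is verified, the proposition follows immediately, since \eqref{eq:k-otimes-a} is then just \eqref{eq:wqvk-op} localized at $p$.
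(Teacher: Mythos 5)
Your proposal is correct and follows essentially the same route as the paper: localize the global presentation $\W(V,K)=\coker\{K\oo\mc{O}_{\P}\to\Omega(2)\}$ at $p$ and identify the induced map with the Koszul differential written in the local frame $dv_j=v_j-x_j\cdot v_1$. The only quibble is the phrase ``$v_1$ maps to $0$ in $\Omega_p$'' (there is no such map; rather, the $v_1$-components cancel because $\delta_2$ lands in $\ker\{V\oo A\to A\}=\Omega_p$), but your actual computation $x_s\cdot v_t - x_t\cdot v_s = x_s\, dv_t - x_t\, dv_s$ is exactly right.
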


\begin{proof}
The Koszul differential $\delta_2\colon \bwedge^2 V \otimes S\to V\otimes S$
yields a map $\delta_2\colon \bwedge^2 V \otimes A\to V\otimes A$, which in
turn restricts to a map $\pd \colon K\otimes A\to \Omega_p$. We then obtain
the commuting diagram
\begin{equation}
\label{eq:pd-delta}
\begin{tikzcd}[column sep=22pt]
K \otimes A \ar[r, "\pd"] \ar[d, hook]
& \Omega_p \ar[r, "\nu", two heads] \ar[d, equal] & \W_p \\
\bwedge^2V \otimes A  \ar[r, "\delta_2"]& \Omega_p \ar[ur]
\end{tikzcd}
\end{equation}
and it is now readily verified that the map $\pd$ has the desired form.
\end{proof}

By abuse of notation, we will write $dv_t$ for the element of $\W_p$
which is the image under $\nu$ of the corresponding element $dv_t\in\Omega_p$.

\subsection{Resonance varieties and schemes}
\label{subsec:resvars}
It has been proven in \cite{PS-crelle} that the set-theoretic
support of the $S$-module $W(V,K)$ is given by the \defi{resonance},
\[
\RR(V,K)\coloneqq\Bigl\{a\in V^\vee  :  \text{ there exists $b\in
V^\vee$ such that $a\wedge b\in K^\perp\setminus \{0\}$} \Bigr\}\cup \{0\}.
\]
where $K^{\perp}\subseteq \bigwedge^2 V^{\vee}$. In other words, for
a given $K\subseteq \bigwedge^2 V$ we have that $W_q(V,K)=0$ for
$q\gg 0$ if and only if $\RR(V,K)=0$.  The annihilator of the Koszul module
$W(V,K)$, which we denote by $I(V,K)$, is a homogeneous ideal in $S$.
We let
\begin{equation}
\label{eq:def-res-affine-scheme}
\RR(V,K) \coloneqq \op{Spec}\bigl(S/I(V,K)\bigr)
\end{equation}
denote the \defi{affine resonance scheme},
which is the scheme-theoretic support of $W(V,K)$ inside $V^{\vee}$.
Since this is the only scheme structure we will use, there is no ambiguity
in using the same notation for the scheme $\RR(V,K)$ and its underlying
variety.

\begin{remark}
\label{rem:fitting}
There is another possible scheme structure
on $\RR(V,K)$, given by the
Fitting ideal $\Fitt_0 W(V,K)$. However, the scheme structure
given by the annihilator $I(V,K)$ is the minimal one, and is invariant
under closed embeddings of ambient affine spaces. This property of the
annihilator support will be used in the proof of Theorem~\ref{thm:sep->disjoint}. For a comparison of these two scheme structures in the case of right-angled Artin groups, see Example \ref{ex:4paths}.
\end{remark}

The scheme-theoretic support of the Koszul sheaf $\W(V,K)$ defined
in \eqref{eq:wqvk-op} is called the \defi{projective resonance scheme},
and is denoted by
\begin{equation}\label{eq:rproj-def}
\Rproj(V,K) \coloneqq \op{Proj}\bigl(S/I(V,K)\bigr) \, .
\end{equation}
Using the same convention as above, we will denote the underlying
projective resonance variety by $\Rproj(V,K)$, although our main
interest is in the scheme structure.

We isolate  here and in the later sections certain desirable properties
of the resonance schemes and their underlying varieties that arose from
the study of resonance varieties of complements of hyperplane arrangements.

\begin{definition}
\label{def:res-lin}
 We say that the resonance of $(V,K)$ is \defi{linear} if $\mc{R}(V,K)$ is
a union of linear subspaces of $V^{\vee}$.  Furthermore, we say that $\mc{R}(V,K)$ is
\defi{projectively disjoint} if the irreducible components of $\Rproj(V,K)$
are pairwise disjoint. Finally, we say that the resonance is
\defi{projectively reduced} if $\Rproj(V,K)$ is a reduced scheme.
\end{definition}

\begin{remark}
\label{rem:lines}
If $\Rproj(V,K)$ is a finite union of lines, then these lines are
necessarily projectively disjoint.
\end{remark}

\subsection{Projective geometry interpretation of resonance}
\label{subsec:proj-geom}
The projectivized resonance has a simple description in terms of
projective geometry, using the incidence variety of the Grassmannian.
Let $\Gr\coloneqq \Grass_2(V^\vee)$ be the
Grassmannian of $2$-planes in $V^\vee$, viewed as a
subset of $\mathbf{P}\big(\!\bigwedge^2V^\vee\big)$
via the Pl\"{u}cker embedding. Consider the diagram
\begin{equation}
\label{eq:proj-reson}
\begin{tikzcd}[column sep=24pt]
 \mathbf{P}\times \Gr
& \Xi\ar[r, "\pr_2"] \ar[d, "\pr_1"] \ar[l, hook']
&\Gr \ar[r, hook]
&\mathbf{P}\big(\!\bigwedge^2V^\vee\big) ,\\
 &\mathbf{P} &&
\end{tikzcd}
\end{equation}
where $\Xi=\bigl\{(p,L)\in \mathbf{P} \times \Gr : p\in L\bigr\} $
is the incidence variety and $\pr_1$ and $\pr_2$ are the two
projections as in \eqref{eq:proj-reson}. The
natural bijection between the points of $\Gr$ and the set of lines in
$\mathbf{P}$ is given by the correspondence
$\pr_1\circ \pr_2^{-1}$, which maps a point
$[a\wedge b]\in  \mathbf{G}$ to the line
$L_{ab}$ in $\mathbf{P}$ passing through $[a]$ and $[b]$.
The inverse of this map is given by
$L_{ab}\mapsto\mathrm{pr}_2( \mathrm{pr_1}^{-1}([a])\cap\ \mathrm{pr_1}^{-1}([b]))$.
The next lemma readily follows.

\begin{lemma}
\label{lem:resonance-proj}
Set-theoretically, the projective resonance variety is given by
\begin{align}
\label{eq:rvk-pr}
\mathbf{R}(V,K)&=
\mathrm{pr}_1\bigl(\mathrm{pr_2}^{-1}(\Gr\cap\mathbf{P}K^\perp)\bigr).
\\
\intertext{Moreover, the following inclusion holds}
\label{eq:GcapPKperp}
\Gr\cap\mathbf{P}K^\perp&\subseteq
\mathrm{pr}_2\bigl(\mathrm{pr_1}^{-1}(\mathbf{R}(V,K))\bigr).
\end{align}
\end{lemma}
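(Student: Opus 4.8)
The plan is to reduce both assertions to the linear-algebra content of the Pl\"ucker picture in \eqref{eq:proj-reson} and then verify the set-theoretic inclusions directly; there is no serious difficulty, so the argument is really a careful bookkeeping of the correspondences. The only input I would quote is the set-theoretic description of the underlying variety recalled just above: since $\mathbf{R}(V,K)=\op{Proj}(S/I(V,K))$, a point $[c]\in\P$ lies on $\mathbf{R}(V,K)$ precisely when $c\neq 0$ and there exists $b\in V^\vee$ with $c\wedge b\in K^\perp\setminus\{0\}$ --- the extra vertex $0$ in the affine resonance cone being irrelevant after passing to $\op{Proj}$. Correspondingly, I would first unwind the two incidence sets: $\mathrm{pr}_2^{-1}(\Gr\cap\mathbf{P}K^\perp)$ is the set of pairs $([c],L)$ with $[c]\in L=L_{ab}$ for some linearly independent $a,b\in V^\vee$ with $a\wedge b\in K^\perp$, while $\mathrm{pr}_1^{-1}(\mathbf{R}(V,K))$ is the set of pairs $([c],L)\in\Xi$ with $[c]\in\mathbf{R}(V,K)$.

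For \eqref{eq:rvk-pr} I would prove the two inclusions separately. The inclusion ``$\subseteq$'' is immediate: given $[c]\in\mathbf{R}(V,K)$, choose $b$ with $c\wedge b\in K^\perp\setminus\{0\}$; then $c$ and $b$ are linearly independent, $L_{cb}$ is a genuine point of $\Gr\cap\mathbf{P}K^\perp$, and $([c],L_{cb})\in\mathrm{pr}_2^{-1}(\Gr\cap\mathbf{P}K^\perp)$ maps to $[c]$ under $\mathrm{pr}_1$. For ``$\supseteq$'', if $[c]$ lies on a line $L_{ab}$ with $a\wedge b\in K^\perp\setminus\{0\}$, write $c=\alpha a+\beta b$ with $(\alpha,\beta)\neq(0,0)$; by bilinearity of the wedge, at least one of $c\wedge b=\alpha\,(a\wedge b)$ and $c\wedge a=-\beta\,(a\wedge b)$ is a nonzero element of $K^\perp$, so $[c]\in\mathbf{R}(V,K)$.

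For the inclusion \eqref{eq:GcapPKperp} I would simply note that if $L=L_{ab}\in\Gr\cap\mathbf{P}K^\perp$, then $[a]\in\mathbf{R}(V,K)$ because $a\wedge b\in K^\perp\setminus\{0\}$, so the pair $([a],L)\in\Xi$ lies in $\mathrm{pr}_1^{-1}(\mathbf{R}(V,K))$ and projects under $\mathrm{pr}_2$ to $L$.

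The only points that need the slightest care --- and where I would expect a careless argument to slip --- are: that a point of $\Gr$ (as opposed to an arbitrary point of $\mathbf{P}(\bigwedge^2 V^\vee)$) forces the two spanning vectors to be linearly independent, which is exactly what makes $L_{ab}$ a well-defined line; and the degenerate subcase in ``$\supseteq$'' where $c$ is proportional to $a$ or to $b$, which is why the argument is phrased as an alternative between $c\wedge b$ and $c\wedge a$. It is worth observing that \eqref{eq:GcapPKperp} is genuinely only an inclusion: $\mathrm{pr}_2(\mathrm{pr}_1^{-1}(\mathbf{R}(V,K)))$ also contains every line through a point of $\mathbf{R}(V,K)$, regardless of whether its Pl\"ucker point lies in $\mathbf{P}K^\perp$.
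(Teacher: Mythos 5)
Your proposal is correct and is exactly the verification the paper leaves implicit (the paper merely asserts that the lemma ``readily follows'' from the incidence-correspondence setup of \eqref{eq:proj-reson}). The two directions of \eqref{eq:rvk-pr} and the inclusion \eqref{eq:GcapPKperp} are handled just as intended, including the key points that decomposability plus nonvanishing of $a\wedge b$ gives linear independence, and that linearity of $K^\perp$ lets you replace $a\wedge b$ by $c\wedge a$ or $c\wedge b$ for any $[c]\in L_{ab}$.
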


In the particular case of hyperplane arrangements, the equality \eqref{eq:rvk-pr}
was previously established by Lima-Filho and Schenck in
\cite[Proposition 2.1]{Filho-Schenck}.  As illustrated in
Example \ref{ex:isotropic-res} below, the inclusion \eqref{eq:GcapPKperp}
is not an equality in general.

Since the projection $\mathrm{pr}_2$
realizes $\Xi$ as the projectivization of the universal rank-two bundle
on the Grassmannian, the projectivized resonance is
covered by lines. More precisely, if $[a]\in \mathbf{R}(V,K)$
and $0\ne a\wedge b\in K^\perp$, then the line $L_{ab}$ joining $[a]$ and $[b]$
is included in $\mathbf{R}(V,K)$.
Note that a point in the resonance may be contained in a
higher-dimensional linear subspace of $\mathbf{R}(V,K)$.
Specifically, if $[a]\in \mathbf{R}(V,K)$, then
\begin{equation}
\label{eq:ha}
H_a\coloneqq \bigl\{[b]\in \mathbf{P}: a\wedge b\in K^\perp\bigr\}
\end{equation}
is a linear subspace of $\mathbf{P}$ of dimension at least one
and completely contained in $\mathbf{R}(V,K)$.

The projective geometry description of resonance yields plenty of examples
of projective varieties that are not resonance varieties.

\begin{example}
\label{ex:generic-points}
Even among varieties
covered by lines, there are simple examples that are not resonance varieties.
Let  $\Gamma=\{p_1,\ldots,p_m\}\subseteq \mathbf{G}$ be
a set of points that are not contained in any hyperplane of
$\mathbf{P}(\bigwedge^2V^\vee)$. Then the set
$X\coloneqq \pr_1\circ \pr_2^{-1}(\Gamma)$
is the union of $m$ disjoint lines, $X=L_1\sqcup\cdots\sqcup L_m$.
Suppose $X=\mathbf{R}(V,K)$, for some subspace $K\subseteq \bigwedge^2V$.
By the above discussion, for any $[a]\in \mathbf{R}(V,K)$, the linear subspace
$H_a$ is contained in $\mathbf{R}(V,K)$. It follows that for any $j$ and any
$[a],[b]\in L_j$ we have $a\wedge b\in K^\perp$, which implies that
$\Gamma=\mathbf{G}\cap \mathbf{P}K^\perp$, which is a contradiction.
Hence, $X$ is not a resonance variety.
\end{example}

\begin{remark}
\label{rem:cjl}
The above situation is in stark contrast with what happens for the closely related
characteristic varieties. For instance, given any integrally
defined hypersurface $Y\subseteq (\C^*)^n$, there exists a finitely presented group $G$
with $H_1(G,\Z)=\Z^n$ for which the characteristic variety $\mathcal{V}(G)\coloneqq
\{\rho\in \Hom(G,\C^*) : H_1(G,\C_{\rho}) \ne 0\}$ is isomorphic to $Y\cup \{1\}$; see
\cite[Lemma 10.3]{SYZ}.
\end{remark}

\section{Isotropic and separable subspaces of resonance}
\label{sect:isotropic}

Recall that  $V$ is a finite-dimensional vector space and
$K\subseteq \bwedge^2 V$ is a linear subspace.

\begin{definition}
\label{def:isotropic}
A linear subspace $\ol{V}^{\vee}\subseteq V^{\vee}$ is said to be \defi{isotropic}
(with respect to $K$) if $\bwedge^2 \ol{V}^{\vee}\subseteq K^{\perp}$.
\end{definition}

The isotropicity property can be described by passing
to the quotient.  If $\pi\colon V\surj \ol{V}$ is the corresponding projection, recalling  that $K^{\perp}$ is the kernel of the  projection
$\bwedge^2 V^{\vee} \surj K^{\vee}$, setting
 $\ol{K}:=\bwedge^2(\pi)(K)$ we observe that   $\ol{V}^{\vee}$ is isotropic if and only if
$\ol{K}=0$.

\begin{definition}
\label{def:isotropic-res}
We say that the resonance variety $\mathcal{R}(V,K)$ is \defi{isotropic}
if it is linear and each of its irreducible components is isotropic.
\end{definition}

By definition, any isotropic subspace $\ol{V}^{\vee}\subseteq V^{\vee}$
is automatically contained in the resonance variety $\mathcal{R}(V,K)$.
Moreover, $\Rproj(V,K)$ is a union of isotropic lines; more precisely,
\begin{equation}
\label{eq:rproj-decomp}
\Rproj(V,K) =\bigcup_{a,b\in V^{\vee}: a\wedge b\in K^{\perp}} L_{ab}.
\end{equation}

\begin{example}
\label{ex:isotropic-res0}
Assume $V^{\vee}=\langle e_1, \ldots, e_n\rangle$ and set
$K^\perp = \langle e_1\wedge e_2\rangle\subseteq \bigwedge^2 V^{\vee}$.
Then clearly $\mathcal{R}(V,K)=\langle e_1, e_2\rangle$, which is isotropic.
\end{example}

On the other hand, a linear component of the resonance variety is not
necessarily isotropic, as shown by the following example.

\begin{example}
\label{ex:isotropic-res}
Consider a subspace $K\subseteq\bwedge^2 V$ of dimension $m$,
where $1\le m\le n-2$ and $n=\dim V$. Since the sheaf  $\Omega^1_{\P}(2)$
has rank $n-1$, formula \eqref{eq:wqvk-op} implies that
$\mc{R}(V,K)$ coincides with  $V^{\vee}$.
However, $\mc{R}(V,K)$ is not isotropic,
as $K^{\perp}$ is a proper subspace of $ \bigwedge^2 V^{\vee}$.
\end{example}

\begin{remark}
Another instance when the resonance coincides with the ambient space is
when $V$ decomposes as a non-trivial direct sum of $\k$-vector subspaces,
$V=U_1\oplus U_2$, so that $U_1^{\vee}\wedge U_2^{\vee}\subseteq K^{\perp}$.
Then it follows that $\mc{R}(V,K)=V^{\vee}$, see \cite[Lemma 5.2]{PS-mathann}.
\end{remark}

For the next definition,  let $E \coloneqq \bw V^{\vee}$
be the exterior algebra on the dual vector space $V^{\vee}$, and write
$\langle U \rangle_E$ for the ideal in $E$ generated by a subset $U\subseteq E$.

\begin{definition}
\label{def:separable} A linear space  $\ol{V}^{\vee}\subseteq V^{\vee}$
is \defi{separable} (with respect to $K\subseteq \bwedge^2 V$) if
\begin{equation}
\label{eq:separable-subspace}
K^{\perp} \cap \langle \ol{V}^{\vee}\rangle_E \subseteq
\bwedge^2 \ol{V}^{\vee}.
\end{equation}
We  say that $\ol{V}^{\vee}$ is \defi{strongly isotropic}
if it is separable and isotropic, that is,
\[
K^{\perp} \cap \langle \ol{V}^{\vee}\rangle_E =
\bwedge^2 \ol{V}^{\vee}.
\]

We say that the resonance variety $\mathcal{R}(V,K)$ is \defi{separable}
if  it is linear and each of its irreducible components is separable.
Likewise, the resonance is \defi{strongly isotropic} if it is isotropic
and separable.
\end{definition}

\begin{example}
\label{ex:extremes}
If $K^\perp=0$, then any subspace is separable with respect to $K$. At the
other end of the spectrum, if $K^\perp=\bwedge^2 V^{\vee}$, then the only separable
subspace is the trivial one.
\end{example}

The next example will be used in the proof of Theorem \ref{thm:sep->disjoint}.

\begin{example}
\label{ex:zero-res}
Suppose $\mc{R}(V,K)=V^{\vee}$. Then the resonance variety
is separable. Moreover, the scheme structure is
clearly reduced. If $\mc{R}(V,K)=\{0\}$, then the resonance variety
is automatically separable. The scheme structure is not necessarily reduced;
however, the projectivized resonance is empty, and thus reduced.
\end{example}

\begin{example}
\label{ex:non-sep-res}
An example of non-separable resonance is obtained
if $V^\vee=\langle e_1,e_2,e_3,e_4\rangle$ and
$K^\perp=\langle e_1\wedge e_2, e_1\wedge e_3+ e_2\wedge e_4\rangle$.
Then $\mathcal{R}(V,K)=\langle e_1, e_2\rangle$, which is not separable.
\end{example}

As shown in the next example, separable subspaces are not
necessarily contained in the resonance.

\begin{example}
\label{ex:separable-res}
Take again $V^\vee=\langle e_1,e_2,e_3,e_4\rangle$ and
$K^\perp = \langle e_1 \wedge e_2 + e_3 \wedge e_4\rangle$.
Then the subspace $\ol{V}^{\vee} = \langle e_1,e_2\rangle$ is separable, yet
$\mc{R}(V,K) = \{0\}$.
\end{example}

\subsection{A local view of separability}
\label{subsec:alt-sep}
If  $n=\dim V$ and $\ol{n}=\dim \ol{V}$,
fix a basis $(e_1,\ldots,e_n)$ of $V^{\vee}$ such that
$(e_1,\ldots,e_{\ol{n}})$ is a basis for $\ol{V}^{\vee}$.
Set $U\coloneqq\ker \big\{\pi\colon V\to\ol{V}\big\}$.
Letting $(v_1,\ldots,v_n)$ denote the dual basis of $V$, we obtain a
direct-sum decomposition,
\begin{equation}
\label{eq:decomp-bw2V}
\bwedge^2\, V = L \oplus M \oplus H,
\end{equation}
where
\begin{align}
\label{eq:decomp-lmod}
L &= \bigl\langle v_s \wedge v_t : s,t\leq \ol{n}\bigr \rangle \notag\cong \bwedge^2\, \ol{V},\\
M &= \bigl \langle v_s \wedge v_t : s\leq \ol{n} \mbox{ and }  t>\ol{n}\bigr \rangle \cong \ol{V}\otimes U,\\
H &= \bigl \langle v_s \wedge v_t : s,t>\ol{n}\bigr \rangle \notag\cong \bwedge^2\, U.
\end{align}

Observe that
\begin{equation}
\label{eq:mvee}
M^\vee=\big(\bwedge^2\,
V^\vee\cap\langle \ol{V}^{\vee}\rangle_E \big)/\bwedge^2\ol{V}^\vee.
\end{equation}
Consider now the map $\bwedge^2 \pi\colon \bwedge^2 V\to \bwedge^2\ol{V}$. Then
$\ker\big(\bwedge^2\pi\big)^\vee=\bwedge^2V^\vee /\bwedge^2\ol{V}^\vee$,
and hence  $M^\vee\subseteq \ker\big(\bwedge^2\pi\big)^\vee$,
inducing a surjection $\ol{\pi}\colon \ker\!\big(\bwedge^2\pi\big)\surj  M$.
Let
\begin{equation}
\label{eq:projection-m}
\begin{tikzcd}[column sep=18pt]
p_M\colon K\cap \ker\!\big(\bwedge^2\pi\big)\ar[r]&  M
\end{tikzcd}
\end{equation}
be the restriction of $\ol{\pi}$ to the subspace $K\cap \ker\!\big(\bwedge^2\pi\big)$.
The next lemma provides a convenient local criterion for verifying the
separability of $\ol{V}^\vee$, that we will often use for concrete applications.

\begin{lemma}
\label{lem:alt-sep}
With notation as above,
\begin{enumerate}[itemsep=2pt,topsep=-1pt]
\item \label{vee1}
$\coker(p_M)\cong \left((K^\perp\cap \langle \ol{V}^{\vee}\rangle_E)/
(K^\perp\cap \bigwedge^2\ol{V}^\vee\right)^{\vee}$.
\item \label{vee2}
The subspace $\ol{V}^\vee\subset V^\vee$ is separable if and only if
the map $p_M$ is surjective.
\end{enumerate}
\end{lemma}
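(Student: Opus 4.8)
The whole statement is a concrete dualization exercise, so I would reduce everything to linear algebra over $\k$ and carefully track which pairings are in play. First I would fix the basis $(e_1,\dots,e_n)$ of $V^\vee$ with $(e_1,\dots,e_{\ol n})$ a basis of $\ol V^\vee$, and the dual basis $(v_1,\dots,v_n)$ of $V$, so that the decomposition \eqref{eq:decomp-bw2V} of $\bwedge^2 V$ as $L\oplus M\oplus H$ is the one spanned by the $v_s\wedge v_t$ according to whether the indices are $\le\ol n$ or $>\ol n$. The dual basis of $\bwedge^2 V^\vee$ is spanned by the $e_s\wedge e_t$, and I would record the three corresponding pieces $L^\vee$, $M^\vee$, $H^\vee$ inside $\bwedge^2 V^\vee$: $L^\vee=\bwedge^2\ol V^\vee$, the piece $M^\vee$ is exactly $(\bwedge^2 V^\vee\cap\langle\ol V^\vee\rangle_E)/\bwedge^2\ol V^\vee$ as in \eqref{eq:mvee} (because $\langle\ol V^\vee\rangle_E$ in degree $2$ is spanned by the $e_s\wedge e_t$ with $s\le\ol n$, i.e.\ $L^\vee\oplus M^\vee$), and $H^\vee=\bwedge^2 U^{\mathrm{ann}}$ is the complementary piece with no $e_i$, $i\le\ol n$, factor.

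For part \eqref{vee1} I would start from the short exact sequence $0\to\ker(\bwedge^2\pi)\to\bwedge^2 V\xrightarrow{\bwedge^2\pi}\bwedge^2\ol V\to 0$ and note that $\bwedge^2\pi$ kills precisely the span of those $v_s\wedge v_t$ with at least one index $>\ol n$, i.e.\ $\ker(\bwedge^2\pi)=M\oplus H$, and its dual $\ker(\bwedge^2\pi)^\vee=\bwedge^2 V^\vee/\bwedge^2\ol V^\vee$ is spanned by $M^\vee\oplus H^\vee$. The map $\ol\pi$ is the projection $M\oplus H\surj M$, dual to the inclusion $M^\vee\hookrightarrow M^\vee\oplus H^\vee$. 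Now $p_M$ is $\ol\pi$ restricted to $K\cap\ker(\bwedge^2\pi)=K\cap(M\oplus H)$, so $\coker(p_M)=M/p_M(K\cap(M\oplus H))$. Dualizing, $\coker(p_M)^\vee$ is the subspace of $M^\vee$ of functionals vanishing on $p_M(K\cap(M\oplus H))$; I would check, using that $p_M$ is a projection, that this equals $\{\phi\in M^\vee:\phi\text{ (extended by }0\text{ on }H)\text{ vanishes on }K\cap(M\oplus H)\}$, and then identify it with $(K^\perp\cap(M^\vee\oplus H^\vee)+L^\vee)/L^\vee$ intersected appropriately. The clean way is: under the pairing between $\bwedge^2 V$ and $\bwedge^2 V^\vee$, the annihilator inside $M^\vee$ of $K\cap(M\oplus H)$ is the image in $M^\vee=(L^\vee\oplus M^\vee)/L^\vee$ of $K^\perp\cap(L^\vee\oplus M^\vee)=K^\perp\cap\langle\ol V^\vee\rangle_E$, modulo $K^\perp\cap L^\vee=K^\perp\cap\bwedge^2\ol V^\vee$; this is the assertion of \eqref{vee1}. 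I expect the one genuinely fiddly point to be verifying that annihilating $K\cap(M\oplus H)$ inside $M^\vee$ is the same as annihilating $K$ inside the full $L^\vee\oplus M^\vee$ and then quotienting by $L^\vee$ — this is where one uses that the decomposition is orthogonal, i.e.\ $L^\vee$ pairs trivially with $M\oplus H$ and $M^\vee\oplus H^\vee$ pairs trivially with $L$.

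Part \eqref{vee2} is then immediate: $p_M$ is surjective $\iff\coker(p_M)=0\iff$ (by \eqref{vee1}) $(K^\perp\cap\langle\ol V^\vee\rangle_E)/(K^\perp\cap\bwedge^2\ol V^\vee)=0\iff K^\perp\cap\langle\ol V^\vee\rangle_E\subseteq\bwedge^2\ol V^\vee$, since the reverse inclusion $\bwedge^2\ol V^\vee\subseteq K^\perp\cap\langle\ol V^\vee\rangle_E$ need not hold — but the quotient being zero says exactly $K^\perp\cap\langle\ol V^\vee\rangle_E\subseteq K^\perp\cap\bwedge^2\ol V^\vee\subseteq\bwedge^2\ol V^\vee$, which is precisely the separability condition \eqref{eq:separable-subspace}. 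So the only real content is the dualization in \eqref{vee1}, and everything else is a one-line deduction.
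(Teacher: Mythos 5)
Your proposal is correct and follows essentially the same route as the paper: both arguments rest on the decomposition $\bwedge^2 V = L\oplus M\oplus H$, the identification \eqref{eq:mvee} of $M^\vee$ with the degree-two part of $\langle \ol{V}^{\vee}\rangle_E$ modulo $\bwedge^2\ol{V}^\vee$, and a dualization of the inclusion $K\cap\ker\big(\bwedge^2\pi\big)\subseteq K$ --- the paper merely packages the bookkeeping as a $3\times 3$ diagram with the Snake Lemma, whereas you compute $\coker(p_M)^\vee$ directly as an annihilator inside $M^\vee$. The ``fiddly point'' you flag does go through: orthogonality of the decomposition gives one inclusion, and the reverse inclusion holds because a functional on $K$ vanishing on $K\cap(M\oplus H)$ factors through the projection of $K$ to $L$ and hence extends to an element of $L^\vee$, so that every annihilating $\phi\in M^\vee$ lifts to $K^\perp\cap\big(L^\vee\oplus M^\vee\big)$.
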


\begin{proof}
The exact sequence
\begin{equation*}
\label{eq:kv-exact}
\begin{tikzcd}[column sep=19pt]
0\ar[r]& K\cap\ker\big(\bwedge^2\pi\big) \ar[r]& K \ar[r]& \bwedge^2\ol{V}
\end{tikzcd}
\end{equation*}
gives rise by dualizing to the exact sequence
\begin{equation}
\label{eq:kv-dual-exact}
\begin{tikzcd}[column sep=19pt]
K^\perp\cap \bwedge^2 \ol{V}^{\vee} \ar[r]&
\bwedge^2\ol{V}^\vee \ar[r]&  K^\vee \ar[r]&
(K\cap\ker(\bwedge^2\pi))^\vee \ar[r] & 0,
\end{tikzcd}
\end{equation}
from which we infer that  the kernel of the map
$K^\vee\to (K\cap\ker(\bigwedge^2\pi))^\vee$ is equal to
$\bigwedge^2\ol{V}^\vee /(K^\perp\cap \bigwedge^2\ol{V}^\vee)$.
Consider now the diagram
\begin{equation}
\label{eq:alt-sep}
\begin{tikzcd}[cramped, sep=small]
  & 0 \arrow[d]
  & 0 \arrow[d]
  & 0 \arrow[d]
  \\
0 \arrow[r]
  & K^\perp\cap \bigwedge^2\ol{V}^\vee \arrow[r]\arrow[d]
  & \bigwedge^2\ol{V}^\vee \arrow[r]\arrow[d]
  & \bigwedge^2\ol{V}^\vee /(K^\perp\cap \bigwedge^2\ol{V}^\vee) \arrow[d]
  \arrow[r] & 0
  \\
0 \arrow[r]
  & K^\perp\cap \langle \ol{V}^{\vee}\rangle_E  \arrow[r]\arrow[d]
  & \bigwedge^2 V^\vee\cap \langle \ol{V}^{\vee}\rangle_E  \arrow[r]\arrow[d]
  & K^\vee \arrow[d]
  \\
0 \arrow[r]
  & (K^\perp\cap \langle \ol{V}^{\vee}\rangle_E)/(K^\perp\cap \bigwedge^2\ol{V}^\vee)  \arrow[r]\arrow[d]
  & M^\vee \arrow[r, "p_M^\vee"]\arrow[d]
  & \big(K\cap\ker(\bigwedge^2\pi)\big)^\vee \arrow[d]
  \\
  & 0
  & 0
  & 0
\end{tikzcd}.
\end{equation}

By \eqref{eq:mvee}, the
middle column is exact; the left column and the top row are clearly exact;
finally, the middle row is also exact.
Thus, the bottom row is also exact and the diagram commutes, by the Snake Lemma.
Dualizing the bottom row in diagram \eqref{eq:alt-sep} yields the first claim.
The second claim now follows from claim \eqref{vee1} and Definition \ref{def:separable}.
\end{proof}

When $\ol{V}^\vee$ is isotropic with respect to
$K$, the above separability criterion simplifies.
First observe that $\ol{V}^\vee$ is isotropic if and only if
$K\subseteq  \ker(\bigwedge^2\pi)$. We then have the following:

\begin{corollary}
\label{cor:isotropic-separable}
An isotropic subspace $\ol{V}^\vee\subseteq V^{\vee}$ is separable if and only if
the map $p_M\colon K\to M$ given by (\ref{eq:projection-m})  is surjective.
\end{corollary}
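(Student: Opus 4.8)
The plan is to obtain the corollary as an immediate specialization of Lemma~\ref{lem:alt-sep}, once the domain of the map $p_M$ is identified under the isotropicity hypothesis. First I would record the reformulation of isotropicity already noted after Definition~\ref{def:isotropic}: the subspace $\ol{V}^{\vee}$ is isotropic with respect to $K$ exactly when $\ol{K}=\big(\bwedge^2\pi\big)(K)=0$, that is, when $K\subseteq\ker\!\big(\bwedge^2\pi\big)$. This is purely formal, since $\ol{K}$ is by construction the image of $K$ under $\bwedge^2\pi$, and $K^{\perp}$ is the kernel of $\bwedge^2 V^{\vee}\surj K^{\vee}$.

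Given this, the intersection $K\cap\ker\!\big(\bwedge^2\pi\big)$ appearing in the definition \eqref{eq:projection-m} of $p_M$ collapses to $K$ itself, so $p_M$ is precisely the restriction to $K$ of the surjection $\ol{\pi}\colon\ker\!\big(\bwedge^2\pi\big)\surj M$. Feeding this into the second claim of Lemma~\ref{lem:alt-sep} --- which asserts that $\ol{V}^{\vee}$ is separable if and only if $p_M$ is surjective --- yields exactly the statement of the corollary, with the map now taking the form $p_M\colon K\to M$.

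There is essentially no obstacle here: the content is already contained in Lemma~\ref{lem:alt-sep}, and the only verification is the one-line identification of $\dom(p_M)$ just described. For a reader who prefers an argument not routed through that lemma, I would instead note that under isotropicity one has $K^{\perp}\cap\bwedge^2\ol{V}^{\vee}=\bwedge^2\ol{V}^{\vee}$, so the separability condition \eqref{eq:separable-subspace} becomes the equality $K^{\perp}\cap\langle\ol{V}^{\vee}\rangle_E=\bwedge^2\ol{V}^{\vee}$; the first claim of Lemma~\ref{lem:alt-sep} (or, equivalently, a direct computation starting from \eqref{eq:mvee}) then identifies this equality with the vanishing of $\coker(p_M)$, i.e.\ with the surjectivity of $p_M\colon K\to M$.
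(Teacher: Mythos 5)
Your proof is correct and follows exactly the paper's route: the paper simply observes that isotropicity is equivalent to $K\subseteq\ker\!\big(\bwedge^2\pi\big)$, so that the domain $K\cap\ker\!\big(\bwedge^2\pi\big)$ of $p_M$ collapses to $K$, and then invokes part \eqref{vee2} of Lemma \ref{lem:alt-sep}. Nothing further is needed.
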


We refer to Lemma \ref{lem:separable-multiplication} below for a dual version
of Corollary \ref{cor:isotropic-separable}. With the help of diagram \eqref{eq:proj-reson},
we now obtain the following projective-geometric interpretation of separability.

\begin{lemma}
\label{lem:proj-separable}
Let $\ol{V}^\vee \subseteq V^\vee$ be a linear subspace and denote by
$\ol{\mathbf{P}}\subseteq\mathbf{P}$ the associated projective subspace.
Then $\ol{V}^\vee$ is separable if and only if
\begin{equation}
\label{eq:separable-proj-subspace}
\Spn(\pr_2\pr_1^{-1}(\ol{\mathbf{P}}))\cap\mathbf{P}(K^\perp)
\subseteq \mathbf{P}\big(\bwedge^2\ol{V}^\vee\big).
\end{equation}
\end{lemma}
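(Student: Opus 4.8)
The plan is to unwind both sides of the claimed equivalence in terms of the span $\Spn\bigl(\pr_2\pr_1^{-1}(\ol{\P})\bigr)$ inside $\P\bigl(\bwedge^2 V^{\vee}\bigr)$ and to identify this span with the projectivization of the subspace $\bwedge^2 V^{\vee}\cap \langle\ol{V}^{\vee}\rangle_E$. Concretely, for a point $p=[a]\in\ol{\P}$, the fiber $\pr_1^{-1}([a])$ consists of all pairs $([a],L)$ with $[a]\in L$, and $\pr_2$ sends this to the set of Plücker points $[a\wedge b]$ as $b$ ranges over $V^{\vee}$; this is the projectivized image of the map $V^{\vee}\to\bwedge^2 V^{\vee}$, $b\mapsto a\wedge b$, whose image is $a\wedge V^{\vee}$. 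Letting $[a]$ vary over $\ol{\P}$, the union of these images spans precisely $\sum_{a\in\ol{V}^{\vee}} a\wedge V^{\vee}$, which is exactly $\bwedge^2 V^{\vee}\cap \langle\ol{V}^{\vee}\rangle_E$ — the degree-$2$ part of the ideal of $E$ generated by $\ol{V}^{\vee}$. Thus $\Spn\bigl(\pr_2\pr_1^{-1}(\ol{\P})\bigr) = \P\bigl(\bwedge^2 V^{\vee}\cap \langle\ol{V}^{\vee}\rangle_E\bigr)$.

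Granting this identification, the left-hand side of \eqref{eq:separable-proj-subspace} becomes $\P\bigl(\bwedge^2 V^{\vee}\cap\langle\ol{V}^{\vee}\rangle_E\bigr)\cap\P(K^{\perp})$, which is $\P\bigl(K^{\perp}\cap\langle\ol{V}^{\vee}\rangle_E\bigr)$ since $K^{\perp}\subseteq\bwedge^2 V^{\vee}$ automatically. The containment in $\P\bigl(\bwedge^2\ol{V}^{\vee}\bigr)$ of projective subspaces is equivalent to the containment $K^{\perp}\cap\langle\ol{V}^{\vee}\rangle_E\subseteq\bwedge^2\ol{V}^{\vee}$ of the underlying linear subspaces (a linear subspace is contained in another precisely when their projectivizations are), which is exactly the defining condition \eqref{eq:separable-subspace} for separability of $\ol{V}^{\vee}$. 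This finishes the equivalence.

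The one point requiring a little care — and the step I expect to be the main obstacle — is the spanning identity $\Spn\bigl(\pr_2\pr_1^{-1}(\ol{\P})\bigr) = \P\bigl(\bwedge^2 V^{\vee}\cap\langle\ol{V}^{\vee}\rangle_E\bigr)$. The inclusion ``$\subseteq$'' is immediate from the description of the fibers above, since each point $[a\wedge b]$ with $[a]\in\ol{\P}$ lies in $\langle\ol{V}^{\vee}\rangle_E$. For ``$\supseteq$'' one must check that decomposable elements $a\wedge b$ with $a\in\ol{V}^{\vee}$ and $b\in V^{\vee}$ arbitrary already span $\bwedge^2 V^{\vee}\cap\langle\ol{V}^{\vee}\rangle_E$; choosing a basis $(e_1,\dots,e_n)$ of $V^{\vee}$ adapted to $\ol{V}^{\vee}=\langle e_1,\dots,e_{\ol n}\rangle$, this degree-$2$ ideal component has basis $\{e_i\wedge e_j : i\le \ol n\}$, and every such basis vector is of the required decomposable form, so the inclusion holds. (One subtlety: $\pr_2\pr_1^{-1}(\ol{\P})$ is a cone of decomposable elements, not a linear space, which is why we take its linear span; the computation just given shows the span is all of the claimed subspace.) Everything else is a formal rewriting using the dictionary between $K^{\perp}\subseteq\bwedge^2 V^{\vee}$, the Plücker coordinates, and the ideal filtration of the exterior algebra $E$.
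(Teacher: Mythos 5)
Your proof is correct and follows exactly the paper's argument: the paper's entire proof consists of the observation that $\Spn\bigl(\pr_2\pr_1^{-1}(\ol{\mathbf{P}})\bigr)$ is the projectivization of the degree-two part of the exterior ideal $\langle\ol{V}^{\vee}\rangle_E$, after which the equivalence with Definition \ref{def:separable} is immediate. You have simply supplied the basis computation verifying that spanning identity, which the paper leaves implicit.
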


\begin{proof}
Note that the projective subspace $\Spn(\pr_2\pr_1^{-1}(\ol{\mathbf{P}}))
\subseteq \mathbf{P}\big(\bwedge^2V^\vee\big)
$
spanned by $\pr_2\pr_1^{-1}(\ol{\mathbf{P}})$ is
the projectivization of the space of quadrics in the exterior ideal
$\langle \ol{V}^\vee\rangle_E$. The claim follows.
\end{proof}

Lemma \ref{lem:alt-sep} provides useful information regarding the restriction
of the Koszul sheaf over the projectivization $\ol{\mathbf{P}}\coloneqq \mathbf{P}(\ol{V})$
of a separable subspace $\ol{V}$. Recall that $\ol{K}$ denotes the
image of $K$ under the projection $\bwedge^2 V\surj \bwedge^2 \ol{V}$
and that the conormal bundle $N_{\ol{\mathbf{P}}/\mathbf{P}}^{\vee}$ is isomorphic to
$\ker(\pi)\otimes\mathcal{O}_\mathbf{P}(-1)$.
We infer that the morphism
$\mathcal{W}(V,K)|_\mathbf{\ol{P}} \to\mathcal{W}(\ol{V},\ol{K})$
is an isomorphism provided
$(K\cap\ker(\bigwedge^2\pi))\otimes\mathcal{O}_\mathbf{\ol{P}}
\to \ker(\pi)\otimes\mathcal{O}_\mathbf{\ol{P}}(1) $ is surjective.
Given that $M$ is naturally identified with $\ker(\pi)\otimes \ol{V}$,
 Lemma \ref{lem:alt-sep} ensures that if $\ol{V}^\vee$ is separable, this morphism is
 already surjective on global sections, and hence the
 isomorphism $\mathcal{W}(V,K)|_\mathbf{\ol{P}} \isom \mathcal{W}(\ol{V},\ol{K})$
 is guaranteed. In the next section we shall prove a stronger statement, namely, that the map
 $\mathcal{W}(V,K) \to\mathcal{W}(\ol{V},\ol{K})$ is a
 local isomorphism at any point of $\ol{\mathbf{P}}$.

\subsection{Strong isotropy via the multiplication map}
\label{subsec:iso-mult}
We now recast Corollary \ref{cor:isotropic-separable} in a setting that will
prove to be useful when studying Koszul modules associated to vector bundles
on varieties. Assume $\ol{V}^\vee\subseteq V^\vee$ is an isotropic subspace,
put $U^\vee=V^\vee/\ol{V}^\vee$ with projection map $\rho\colon V^\vee\to U^\vee$,
and denote by $\phi\colon \bigwedge^2 V^\vee\to K^\vee$ the dual to the inclusion
map $K\inj \bwedge^2 V$.  By the isotropy hypothesis, the map $\phi$
induces a \emph{multiplication map},
\begin{equation*}
\label{eq:mult-map}
\begin{tikzcd}[column sep=19pt]
\mu\colon \ol{V}^\vee\otimes U^\vee\ar[r]& K^\vee,
\end{tikzcd}
\end{equation*}
by setting $\mu\bigl(\alpha\otimes \rho(\beta)\bigr)\coloneqq \phi(\alpha\wedge \beta)$.
Note that absent the isotropy condition, $\mu$ is not well-defined. With the notation
from the previous sections, under the natural identification $M\cong \ol{V}\otimes U$,
the map $\mu$ is dual to $p_M$. The following result which can be deduced
from Lemma \ref{lem:alt-sep}. For  the convenience of the reader, we also give here
a self-contained proof.

\begin{lemma}
\label{lem:separable-multiplication}
The subspace $\ol{V}^\vee$ is strongly isotropic if and only if  $\mu$ is injective.
\end{lemma}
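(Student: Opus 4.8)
The plan is to give a self-contained proof by fixing a linear complement and reducing strong isotropy to the injectivity of a single restriction of $\phi$. First I would choose a splitting $V^\vee = \ol{V}^\vee \oplus W^\vee$; it induces a direct-sum decomposition
\[
\bwedge^2 V^\vee = \bwedge^2 \ol{V}^\vee \;\oplus\; \bigl(\ol{V}^\vee \wedge W^\vee\bigr) \;\oplus\; \bwedge^2 W^\vee,
\]
in which the degree-$2$ component of the exterior ideal is $\langle\ol{V}^\vee\rangle_E \cap \bwedge^2 V^\vee = \bwedge^2 \ol{V}^\vee \oplus \bigl(\ol{V}^\vee \wedge W^\vee\bigr)$. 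Next I note that $\rho|_{W^\vee}\colon W^\vee \isom U^\vee$ is an isomorphism and that, by the very definition of $\mu$, one has $\mu\bigl(\alpha\otimes\rho(w)\bigr) = \phi(\alpha\wedge w)$ for $\alpha\in\ol{V}^\vee$ and $w\in W^\vee$; hence, under the canonical identification $\ol{V}^\vee\otimes W^\vee \cong \ol{V}^\vee\wedge W^\vee$, the composite $\mu\circ\bigl(\id_{\ol{V}^\vee}\otimes\rho|_{W^\vee}\bigr)$ coincides with $\phi|_{\ol{V}^\vee\wedge W^\vee}$. (What makes this comparison legitimate is exactly the isotropy hypothesis: replacing $w$ by another lift of $\rho(w)$ changes $\alpha\wedge w$ by an element of $\bwedge^2\ol{V}^\vee\subseteq K^\perp=\ker\phi$, so $\mu$ is well defined.) Since $\id_{\ol{V}^\vee}\otimes\rho|_{W^\vee}$ is an isomorphism, this reduces the lemma to the equivalence: $\mu$ is injective $\iff$ $K^\perp\cap\bigl(\ol{V}^\vee\wedge W^\vee\bigr)=0$.

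With this reformulation in hand, both implications are short. By isotropy $\bwedge^2\ol{V}^\vee\subseteq K^\perp$, so $\bwedge^2\ol{V}^\vee\subseteq K^\perp\cap\langle\ol{V}^\vee\rangle_E$ always, and strong isotropy is precisely the reverse inclusion. If $\mu$ is injective, I would take $x\in K^\perp\cap\langle\ol{V}^\vee\rangle_E$ and write $x=x_0+x_1$ with $x_0\in\bwedge^2\ol{V}^\vee$ and $x_1\in\ol{V}^\vee\wedge W^\vee$; then $\phi(x)=0$ and $\phi(x_0)=0$ force $\phi(x_1)=0$, hence $x_1\in K^\perp\cap(\ol{V}^\vee\wedge W^\vee)=0$ and $x\in\bwedge^2\ol{V}^\vee$, so $\ol{V}^\vee$ is strongly isotropic. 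Conversely, if $\ol{V}^\vee$ is strongly isotropic and $y\in\ol{V}^\vee\wedge W^\vee$ satisfies $\phi(y)=0$, then $y\in K^\perp\cap\langle\ol{V}^\vee\rangle_E=\bwedge^2\ol{V}^\vee$; since $\bwedge^2\ol{V}^\vee$ and $\ol{V}^\vee\wedge W^\vee$ are distinct summands of the decomposition above, $y=0$, so $\mu$ is injective.

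Alternatively, one can extract the statement directly from the framework already set up: under the identification $M\cong\ol{V}\otimes U$ the map $\mu$ is the $\k$-dual of $p_M$ from \eqref{eq:projection-m}, so $\mu$ is injective if and only if $p_M$ is surjective, which by Corollary \ref{cor:isotropic-separable} is equivalent to separability of the isotropic subspace $\ol{V}^\vee$, i.e.\ to strong isotropy. I do not expect any genuine obstacle in either route; the only point requiring care is the bookkeeping of identifications — verifying that $\mu$ is well defined and that it is indeed dual to $p_M$ (equivalently, that it restricts $\phi$ along $\ol{V}^\vee\wedge W^\vee$) — which is a routine check rather than a real difficulty.
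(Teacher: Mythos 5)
Your proof is correct and is essentially the paper's argument: the paper's self-contained proof does the same two-way check element-wise (writing $\omega=\sum\alpha_i\wedge\beta_i$ with the $\alpha_i$ linearly independent and using that lifts of $\rho(\beta_i)$ differ by elements of $\ol{V}^\vee$), while your choice of a splitting $V^\vee=\ol{V}^\vee\oplus W^\vee$ just packages the same bookkeeping as the identification of $\mu$ with $\phi|_{\ol{V}^\vee\wedge W^\vee}$. Your alternative route via duality with $p_M$ and Corollary \ref{cor:isotropic-separable} is exactly the deduction from Lemma \ref{lem:alt-sep} that the paper mentions immediately before giving its self-contained proof.
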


\begin{proof}
Assume first that $\mu$ is injective. Let
$\omega=\sum\alpha_i\wedge \beta_i\in K^\perp\cap\langle \ol{V}\rangle_E$,
with $\alpha_i\in \ol{V}^\vee$ linearly independent. Set $b_i\coloneqq \rho(\beta_i)$.
Since $\phi(\omega)=0$, it follows that $\mu(\sum\alpha_i\otimes b_i)=0$,
which implies $\sum\alpha_i\otimes b_i=0$. Since $\alpha_i$ are linearly
independent, the elements $b_i$ all vanish, hence $\beta_i\in \ol{V}^\vee$
for all $i$. In particular, $\omega\in\bigwedge^2\ol{V}^\vee$.

Conversely, if $\ol{V}^\vee$ is strongly isotropic, choose $w=\sum\alpha_i\otimes b_i\in\ker(\mu)$
with $\alpha_i\in\ol{V}^\vee$ linearly independent. Lifting each $b_i$ to $\beta_i\in V^\vee$,
we obtain an element $\omega=\sum\alpha_i\wedge\beta_i\in\ker(\phi)=K^\perp$.
Since $\omega$ belongs also to $\langle \ol{V}^\vee\rangle$ and since $\ol{V}^\vee$ is
isotropic, it follows that $\omega\in\bigwedge^2 \ol{V}^\vee$, and hence
$\omega=\sum\alpha'_j\wedge\beta'_j$ with $\alpha'_j,\beta'_j\in \ol{V}^\vee$.
Since $\rho(\beta'_j)=0$, we have that $w=0$.
\end{proof}

This form of Corollary \ref{cor:isotropic-separable} is particularly
useful in the case of vector bundles. We refer to Section \ref{subsec:bundles}
for a more detailed discussion.

\section{Koszul modules with separable resonance}
\label{sect:separate}
Our goal in this section is to study those Koszul modules
whose resonance varieties are separable, and to prove
Theorem~\ref{thm:sep->reduced}, part \eqref{t1-1}
and Theorem~\ref{thm:separable} from the Introduction.

\subsection{Bases in $K$ with respect to separable subspaces}
\label{subsec:sep-bases}
As usual, let $V$ be a finite-dimensional vector space and let
$K\subseteq \bwedge^2 V$ be a subspace.
The next lemma provides explicit bases in $K$, related to a given
separable linear subspace  $\ol{V}^{\vee}\subseteq V^{\vee}$.
Let $\bwedge^2\, V = L \oplus M \oplus H$ be the direct-sum
decomposition given by \eqref{eq:decomp-lmod}.

\begin{lemma}
\label{lem:2nd-basis}
Let $\ol{V}^{\vee}$ be a separable subspace of $V^{\vee}$ with respect
to $K$.  There exists then a basis of $K$ of the form
$\{\a_{s,t}:s\leq \ol{n}, t>\ol{n}\} \cup \{\b_1, \ldots, \b_N\}$,
where $N$ is a non-negative integer, such that, for each $s\leq \ol{n}$, $t>\ol{n}$,
and $1\leq j\leq N$, we have
 \begin{equation}
 \label{eq:abs}
 \a_{s,t} = v_s \wedge v_t + h_{s,t},\quad \b_j =\ell_j + h_j,
 \end{equation}
 for some collection of elements $\ell_j \in L$ and $h_{s,t}, h_j\in H$.
 \end{lemma}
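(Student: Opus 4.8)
The plan is to combine the direct-sum decomposition $\bwedge^2 V=L\oplus M\oplus H$ of \eqref{eq:decomp-bw2V} with the local separability criterion of Lemma~\ref{lem:alt-sep}. Write $\pr_M\colon\bwedge^2 V\to M$ for the projection along $L\oplus H$. Since $\pi\colon V\to\ol V$ kills the dual basis vectors $v_t$ with $t>\ol n$ and restricts to an isomorphism on $\langle v_1,\dots,v_{\ol n}\rangle$, the induced map $\bwedge^2\pi$ carries $L$ isomorphically onto $\bwedge^2\ol V$ and annihilates $M\oplus H$; hence $\ker\!\big(\bwedge^2\pi\big)=M\oplus H$, and the map $p_M$ of \eqref{eq:projection-m} is precisely the restriction of $\pr_M$ to $K\cap(M\oplus H)$. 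By hypothesis $\ol V^\vee$ is separable, so Lemma~\ref{lem:alt-sep}\eqref{vee2} guarantees that $p_M$ is surjective.

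First I would construct the elements $\a_{s,t}$. For each pair $s\le\ol n$, $t>\ol n$, the bivector $v_s\wedge v_t$ is one of the basis vectors of $M$, so surjectivity of $p_M$ yields an element $\a_{s,t}\in K\cap(M\oplus H)$ with $\pr_M(\a_{s,t})=v_s\wedge v_t$. Lying in $M\oplus H$ with $M$-component $v_s\wedge v_t$, this element automatically has the required shape $\a_{s,t}=v_s\wedge v_t+h_{s,t}$ with $h_{s,t}\in H$. Because the $v_s\wedge v_t$ are distinct members of a basis of $M$, the family $\{\a_{s,t}\}$ is linearly independent, and its $\k$-linear span meets $L\oplus H$ (hence also $K\cap(L\oplus H)$) only in $0$.

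To finish I would split $K$ along $\pr_M$. The restriction $\pr_M|_K$ is surjective, since its image already contains all the $v_s\wedge v_t$, and its kernel is $K\cap(L\oplus H)$; as $\pr_M$ carries the span of the $\a_{s,t}$ isomorphically onto $M$, every $w\in K$ differs from an element of $\langle\a_{s,t}\rangle_\k$ by an element of $K\cap(L\oplus H)$, so $K=\langle\a_{s,t}\rangle_\k\oplus\big(K\cap(L\oplus H)\big)$. Choosing any basis $\b_1,\dots,\b_N$ of $K\cap(L\oplus H)$ and decomposing each $\b_j=\ell_j+h_j$ according to $L\oplus H$ then produces the asserted basis of $K$. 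I do not anticipate a serious obstacle: the one substantive point is identifying $p_M$ with $\pr_M|_{K\cap(M\oplus H)}$ and invoking separability, which is exactly what forces the $\a_{s,t}$ to sit inside $M\oplus H$ rather than merely inside $K$; the remainder is routine bookkeeping with the triple decomposition of $\bwedge^2 V$.
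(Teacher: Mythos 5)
Your argument is correct and follows essentially the same route as the paper: separability gives surjectivity of $p_M$ via Lemma~\ref{lem:alt-sep}\eqref{vee2}, the $\a_{s,t}$ are lifts of the basis of $M$ inside $K\cap(M\oplus H)$, and the $\b_j$ complete these to a basis of $K$. Your explicit splitting $K=\langle\a_{s,t}\rangle_\k\oplus\bigl(K\cap(L\oplus H)\bigr)$ is in fact a slightly more careful rendering of the paper's choice of complement (which must contain $\ker(p_M)=K\cap H$, as the remark following the lemma indicates), so there is nothing to fix.
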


\begin{proof}
By Lemma \ref{lem:alt-sep}, the map
$p_M$ is surjective.  Hence, we can lift each $v_s\wedge v_t\in M$ to
an element $\a_{s,t}\in K\cap (M\oplus H)$. We write
$\alpha_{s,t}=v_s\wedge v_t+h_{s,t}$, where $h_{s,t}\in H$.
If we take $(\b_1,\ldots,\b_N)$ to be a basis of an algebraic complement of
$K\cap (M\oplus H)$ in $K$, the elements $\b_j$
have the desired form, namely, $\b_j=\ell_j+h_j$, for some $\ell_j\in L$
and $h_j\in H$.
\end{proof}

Note that for some of the elements $\beta_j$ in the above lemma (e.g., those
contained in the kernel of $p_M$ from \eqref{eq:projection-m}), the corresponding
element $\ell_j$ is zero.

\subsection{Koszul sheaves and separable components}
\label{subsec:K-sheaves}
Consider the map of Koszul modules
$\pi \colon W(V,K)\to W(\ol{V}, \ol{K})$ from \eqref{eq:onto-pi-W}
and the corresponding map of Koszul sheaves,
\begin{equation}
\label{eq:uppi-W}
\begin{tikzcd}[column sep=20pt]
\uppi\colon \mc{W}(V,K) \ar[r, two heads]& \mc{W}(\ol{V},\ol{K}).
\end{tikzcd}
\end{equation}
We write $\mc{W} = \mc{W}(V,K)$ and
$\ol{\mc{W}} = \mc{W}(\ol{V},\ol{K})$, and we let
$\ol{\P} = \mathbf{P}\bigl(\ol{V}^{\vee}\bigr) \hookrightarrow
\P=\mathbf{P}\bigl(V^{\vee}\bigr)$
be the inclusion map. Fix a point $p\in \P$.
Write $p=[e_1]$ for $0\ne e_1\in \ol{V}^{\vee}$ and complete $\{e_1\}$ to
bases of $\ol{V}^{\vee}$ and $V^{\vee}$ as before.
By Proposition \ref{prop:stalk-wp}, the stalk of $\W$ at $p$ has presentation
$K \oo A \xrightarrow{\,\pd\,} \Omega_p \xrightarrow{\,\nu\,} \W_p \to 0$,
where $\pd$ is the restriction to $K\oo A$ of the map defined on $
\bwedge^2 V\oo A$ by \eqref{eq:def-pd}.

Assume now that $\ol{V}^{\vee}$ is a separable subspace of
$V^{\vee}$ with respect to $K$, and consider the basis of $K$
obtained in Lemma~\ref{lem:2nd-basis}.

\begin{lemma}
\label{lem:dvt=0}
For $t=\ol{n}+1,\ldots,n$, we have that $dv_t=0$ in $\W_p$.
\end{lemma}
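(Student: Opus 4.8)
The plan is to exploit the special basis of $K$ from Lemma~\ref{lem:2nd-basis} together with the explicit formula \eqref{eq:def-pd} for the stalk presentation map $\pd$. Recall that $\W_p$ is the cokernel of $\pd\colon K\oo A\to\Omega_p$, so to show $dv_t=0$ in $\W_p$ for $t>\ol n$ it suffices to exhibit, for each such $t$, an element of $K\oo A$ whose image under $\pd$ equals $dv_t\in\Omega_p$. The natural candidate is built from the basis element $\a_{1,t}=v_1\wedge v_t+h_{1,t}$ supplied by Lemma~\ref{lem:2nd-basis} (taking the index $s=1$, which is allowed since $1\le\ol n$). Applying \eqref{eq:def-pd} we get $\delta_2(v_1\wedge v_t)=dv_t$, and $\delta_2(h_{1,t})$ is a combination of terms $x_s\cdot dv_{t'}-x_{t'}\cdot dv_s$ with all indices $>\ol n$, i.e.\ an element of $\m\cdot\Omega_p$ — more precisely, of $\m\cdot\sum_{j>\ol n}A\,dv_j$.

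So the first step is to record that $\pd(\a_{1,t}\oo 1)=dv_t+(\text{element of }\m\cdot\Omega_p')$, where $\Omega_p'\coloneqq\bigoplus_{j>\ol n}A\,dv_j\subseteq\Omega_p$. Doing this simultaneously over all $t>\ol n$, the images $\{\pd(\a_{1,t}\oo1)\}_{t>\ol n}$ form a system of $(n-\ol n)$ elements in $\Omega_p'$ whose ``matrix'' with respect to the basis $(dv_t)_{t>\ol n}$ is $\mathrm{Id}+(\text{entries in }\m)$. Since $A$ is local with maximal ideal $\m$, this matrix is invertible over $A$; hence the $A$-submodule of $\W_p$ generated by the classes $\{dv_t:t>\ol n\}$ is killed by an invertible matrix, forcing each $dv_t=0$ in $\W_p$. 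Equivalently: the submodule of $K\oo A$ spanned by the $\a_{1,t}$ already surjects, via $\pd$, onto $\Omega_p'$ inside $\W_p$'s presentation, so $\Omega_p'$ maps to $0$ in $\W_p$.

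I expect the only real point requiring care is the bookkeeping in the second step — confirming that $\delta_2(h_{s,t})\in\m\cdot\Omega_p$, which is immediate from \eqref{eq:def-pd} since every monomial $v_{s'}\wedge v_{t'}$ occurring in $h_{s,t}\in H$ has $s',t'>\ol n$, so its image is $x_{s'}dv_{t'}-x_{t'}dv_{s'}$ with $x_{s'},x_{t'}\in\m$ — and then invoking Nakayama-style invertibility of $\mathrm{Id}+(\m)$. One should also note that the elements $\b_j$ of the basis play no role here: they are not needed to kill the $dv_t$, and will instead enter the companion computation (presumably the next lemma) describing the remaining presentation of $\W_p$ in terms of the $\ell_j$. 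Thus the proof reduces to the two short observations above plus the local-ring invertibility argument, with no genuine obstacle beyond careful indexing.
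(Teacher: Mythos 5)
Your proposal is correct and follows essentially the same route as the paper: both hinge on the basis elements $\a_{1,t}=v_1\wedge v_t+h_{1,t}$ from Lemma~\ref{lem:2nd-basis} and on the observation that $\delta_2(H)$ lands in $\mf{m}$ times the span of the $dv_j$ with $j>\ol{n}$. The paper phrases the final step as $\mc{M}\subseteq\mf{m}\cdot\mc{M}$ plus Nakayama, while you phrase it as invertibility of an $\mathrm{Id}+(\mf{m})$ matrix over the local ring $A$; these are the same argument.
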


\begin{proof}
We let $\mc{M}\subseteq\W_p$ denote the $A$-submodule generated by
$dv_{\ol{n}+1},\ldots,dv_n$. To prove that $\mc{M}=0$, it suffices by Nakayama's
lemma to show that $\mc{M} \subseteq \mf{m}\cdot\mc{M}$. We note that
by \eqref{eq:decomp-bw2V} and \eqref{eq:def-pd} we have that
\begin{equation}
\label{eq:nupd-h=0}
\nu(\pd(h)) \in \mf{m}\cdot\mc{M}\:\mbox{ for all $h\in H$}.
\end{equation}
Since
\[
0 = \nu(\pd(\a_{1,t})) = \nu(\delta_2(\alpha_{1,t }))=\nu(dv_t)+ \nu(\delta_2(h_{1,t})),
\]
it follows that $dv_t\in \mf{m}\cdot\mc{M}$ for $t=\ol{n}+1,\ldots,n$,
showing that $\mc{M} \subseteq \mf{m}\cdot\mc{M}$ and concluding the proof.
\end{proof}

Combining Lemma~\ref{lem:dvt=0} with the following result will allow us
to conclude that locally at $p$, the Koszul sheaf $\W$ is scheme-theoretically
supported on $\ol{\P}=\mathbf{P}\bigl(\ol{V}^{\vee}\bigr)$.

\begin{lemma}
\label{lem:xtdvs=0}
We have that $x_t\cdot dv_s = 0$ in $\W_p$, for all $2\leq s\leq \ol{n}$ and $t>\ol{n}$.
\end{lemma}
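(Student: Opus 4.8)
The plan is to mimic the argument of Lemma \ref{lem:dvt=0}, but now working modulo the submodule already known to vanish. First I would invoke Lemma \ref{lem:dvt=0} to get $dv_t = 0$ in $\W_p$ for all $t > \ol n$; this lets me compute in $\W_p$ treating all the "$H$-directions" as zero. More precisely, for any $h\in H$, which by \eqref{eq:decomp-lmod} is a combination of wedges $v_a\wedge v_b$ with $a,b>\ol n$, formula \eqref{eq:def-pd} gives $\delta_2(v_a\wedge v_b) = x_a\, dv_b - x_b\, dv_a$, so $\nu(\pd(h))$ is an $A$-linear combination of the $dv_c$ with $c>\ol n$, hence already zero in $\W_p$ by Lemma \ref{lem:dvt=0}. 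Likewise any $\ell\in L$ is a combination of $v_a\wedge v_b$ with $a,b\le \ol n$, and by \eqref{eq:def-pd} (the case $2\le s,t\le n$, resp. the case $v_1\wedge v_t$) $\nu(\pd(\ell))$ is an $A$-linear combination of $dv_c$ with $2\le c\le \ol n$.

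Next I would use the basis $\{\a_{s,t} : s\le\ol n,\ t>\ol n\}\cup\{\b_1,\dots,\b_N\}$ of $K$ from Lemma \ref{lem:2nd-basis}, with $\a_{s,t} = v_s\wedge v_t + h_{s,t}$ and $h_{s,t}\in H$. Since $\a_{s,t}\in K$, we have $\nu(\pd(\a_{s,t})) = 0$ in $\W_p$. Applying \eqref{eq:def-pd} to the leading term $v_s\wedge v_t$ (with $2\le s\le\ol n$ and $t>\ol n$) gives $\delta_2(v_s\wedge v_t) = x_s\, dv_t - x_t\, dv_s$, and the correction term contributes $\delta_2(h_{s,t})$, which as noted above becomes zero in $\W_p$. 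Therefore in $\W_p$ we get $0 = x_s\, dv_t - x_t\, dv_s$. By Lemma \ref{lem:dvt=0}, $dv_t = 0$, so the first term drops and we are left with $x_t\cdot dv_s = 0$ in $\W_p$, which is exactly the claim.

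I expect the only subtlety to be bookkeeping: being careful that the basis element $\a_{s,t}$ from Lemma \ref{lem:2nd-basis} is available precisely for the index range $s\le\ol n,\ t>\ol n$ appearing in the statement (so the element $v_s\wedge v_t$ does lie in $M$ and can be lifted), and that the range $s\ge 2$ is what makes $dv_s$ a genuine generator of $\Omega_p$ as in \eqref{eq:omegap} rather than the (absent) $dv_1$. Beyond that, the computation is entirely a matter of substituting \eqref{eq:def-pd} and quoting Lemma \ref{lem:dvt=0}; there is no real obstacle. The short proof would read roughly:

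\begin{proof}
Fix $2\le s\le\ol n$ and $t>\ol n$. By Lemma \ref{lem:2nd-basis}, $K$ contains an element $\a_{s,t} = v_s\wedge v_t + h_{s,t}$ with $h_{s,t}\in H$. Since $h_{s,t}$ is an $A$-linear combination of wedges $v_a\wedge v_b$ with $a,b>\ol n$, formula \eqref{eq:def-pd} shows that $\nu(\delta_2(h_{s,t}))$ lies in the $A$-submodule of $\W_p$ generated by $dv_{\ol n+1},\dots,dv_n$, which vanishes by Lemma \ref{lem:dvt=0}. Hence, applying $\nu\circ\pd$ to $\a_{s,t}\in K$ and using \eqref{eq:def-pd},
\[
0 = \nu(\pd(\a_{s,t})) = x_s\cdot dv_t - x_t\cdot dv_s + \nu(\delta_2(h_{s,t})) = -x_t\cdot dv_s
\]
in $\W_p$, where we also used $dv_t = 0$ from Lemma \ref{lem:dvt=0}. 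Thus $x_t\cdot dv_s = 0$ in $\W_p$.
\end{proof}
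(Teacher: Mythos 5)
Your proof is correct and follows essentially the same route as the paper: both apply $\nu\circ\pd$ to the basis element $\a_{s,t}=v_s\wedge v_t+h_{s,t}$ from Lemma \ref{lem:2nd-basis}, use \eqref{eq:def-pd} to extract $x_s\,dv_t-x_t\,dv_s$, and kill both $dv_t$ and the contribution of $h_{s,t}$ via Lemma \ref{lem:dvt=0} (the paper quotes the observation \eqref{eq:nupd-h=0} for the latter, which you re-derive on the spot). Nothing is missing.
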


\begin{proof}
As in the proof of Lemma~\ref{lem:dvt=0}, we have that
\begin{equation}
\label{eq:nupd-ast}
0= \nu(\pd(\a_{s,t}))  =\nu( x_s\cdot dv_t - x_t\cdot dv_s) + \nu(\delta_2(h_{s,t})).
\end{equation}
Using \eqref{eq:nupd-h=0} together with Lemma~\ref{lem:dvt=0} we
conclude that $\nu(x_t\cdot dv_s)=0$, as desired.
\end{proof}

\begin{proposition}
\label{prop:uppi-local-isom}
Assume $\ol{V}^{\vee}$ is a separable linear subspace contained in $\RR(V,K)$.
Then the map $\uppi\colon \mc{W}(V,K) \surj \mc{W}(\ol{V},\ol{K})$
induces an isomorphism on stalks at each point $p\in\ol{\P}$.
\end{proposition}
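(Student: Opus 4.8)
The plan is to work locally at a point $p=[e_1]\in\ol{\P}$ and compare the presentations of the two stalks $\W_p$ and $\ol{\W}_p$ given by Proposition~\ref{prop:stalk-wp}. First I would set up notation as in Section~\ref{subsec:stalks-koszul}: complete $\{e_1\}$ to a basis $(e_1,\dots,e_n)$ of $V^\vee$ with $(e_1,\dots,e_{\ol n})$ a basis of $\ol V^\vee$, let $A=\mc O_{\P,p}$, $\ol A=\mc O_{\ol\P,p}$, and note $A=\k[x_2,\dots,x_n]_{\mf m}$, $\ol A=\k[x_2,\dots,x_{\ol n}]_{\ol{\mf m}}$, so that $\ol A=A/(x_{\ol n+1},\dots,x_n)A$. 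The stalk $\Omega_{\P,p}$ is the free $A$-module on $dv_2,\dots,dv_n$, while $\Omega_{\ol\P,p}$ is the free $\ol A$-module on $dv_2,\dots,dv_{\ol n}$. The surjection $\uppi\colon\W_p\surj\ol\W_p$ is induced by sending $dv_j\mapsto dv_j$ for $j\le\ol n$ and $dv_t\mapsto 0$ for $t>\ol n$ (this is the sheaf map~\eqref{eq:onto-pi-W-sheaf} read through the presentation~\eqref{eq:k-otimes-a}). So the whole point is to prove injectivity of $\uppi$ on stalks.

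The key input is that Lemmas~\ref{lem:dvt=0} and~\ref{lem:xtdvs=0} already show, in $\W_p$, that $dv_t=0$ for all $t>\ol n$ and that $x_t\cdot dv_s=0$ for $2\le s\le\ol n$, $t>\ol n$. The first relation says $\W_p$ is generated as an $A$-module by $dv_2,\dots,dv_{\ol n}$; the second says that the variables $x_{\ol n+1},\dots,x_n$ kill those generators, so $(x_{\ol n+1},\dots,x_n)A\subseteq\Ann_A(\W_p)$ and hence $\W_p$ is naturally an $\ol A=A/(x_{\ol n+1},\dots,x_n)$-module. Thus both $\W_p$ and $\ol\W_p$ are $\ol A$-modules, and $\uppi$ is an $\ol A$-linear surjection. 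To finish I would compute a presentation of $\W_p$ over $\ol A$: starting from $K\oo A\xrightarrow{\pd}\Omega_p\to\W_p\to 0$, after killing $dv_t$ ($t>\ol n$) and tensoring with $\ol A$, the target becomes the free $\ol A$-module $\bigoplus_{j=2}^{\ol n}\ol A\cdot dv_j$. Using the separable basis of $K$ from Lemma~\ref{lem:2nd-basis}, $K=\langle\alpha_{s,t}:s\le\ol n,t>\ol n\rangle\oplus\langle\beta_1,\dots,\beta_N\rangle$ with $\alpha_{s,t}=v_s\wedge v_t+h_{s,t}$ and $\beta_j=\ell_j+h_j$, $\ell_j\in L\cong\bwedge^2\ol V$, $h_{s,t},h_j\in H\cong\bwedge^2 U$: the images under $\pd$ of the $\alpha_{s,t}$ and of the $H$-parts $h_{s,t},h_j$ all lie in the ideal $(x_{\ol n+1},\dots,x_n)$ applied to the $dv_j$ ($j\le\ol n$) plus multiples of $dv_t$ ($t>\ol n$) — by the formulas~\eqref{eq:def-pd} for $\delta_2$ on $M\oplus H$ — hence become $0$ in the free $\ol A$-module after the reductions. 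What survives is exactly $\pd(\ell_j)$, which is the image of $\beta_j$ under the Koszul differential for the smaller pair, i.e. the presentation relation for $\ol\W_p=\coker(\ol K\oo\ol A\to\Omega_{\ol\P,p})$. Since $(\bwedge^2\pi)(K)=\ol K$ is spanned precisely by the $\ell_j$ (the $\alpha_{s,t}$ having zero $L$-component), this is a presentation of $\ol\W_p$. Therefore $\W_p\cong\ol\W_p$ as $\ol A$-modules and $\uppi$ is an isomorphism on stalks.

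The main obstacle I anticipate is the bookkeeping in the last step: one must check carefully that after imposing the relations $dv_t=0$ ($t>\ol n$) and passing to $\ol A$, the relations coming from $\alpha_{s,t}$ and from the $H$-components $h_{s,t},h_j$ contribute \emph{nothing} new — i.e. they are consequences of $dv_t=0$ and $x_t dv_s=0$ (equivalently they map into $(x_{\ol n+1},\dots,x_n)$-multiples of surviving generators) — so that the cokernel is computed by the $\ell_j$ alone. Concretely, $\delta_2(v_s\wedge v_t)=x_s dv_t-x_t dv_s$ for $s\le\ol n<t$ and $\delta_2(v_s\wedge v_t)=x_s dv_t-x_t dv_s$ for $s,t>\ol n$, and $\delta_2(v_1\wedge v_t)=dv_t$; in all of these every term is either a multiple of some $dv_t$ with $t>\ol n$ or lies in $(x_{\ol n+1},\dots,x_n)$, so it dies. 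The cleanest way to organize this is to first observe that $\uppi$, being surjective and $\ol A$-linear between $\ol A$-modules, is an isomorphism iff the induced map on presentations is, and then to verify that the square
\[
\begin{tikzcd}[column sep=24pt]
\ol K\oo\ol A \ar[r,"\ol\pd"] & \Omega_{\ol\P,p} \ar[d,equal] \\
K\oo\ol A \ar[u,two heads] \ar[r] & \bigoplus_{j=2}^{\ol n}\ol A\cdot dv_j
\end{tikzcd}
\]
has the property that the image of the bottom map equals the image of $\ol\pd$ — which is exactly the content of the computation with the separable basis. An alternative, even shorter, route: Lemma~\ref{lem:dvt=0} and Lemma~\ref{lem:xtdvs=0} together with~\eqref{eq:nupd-h=0} show the kernel of $\Omega_{\ol\P,p}\to\W_p$ contains $\ol\pd(\ol K\oo\ol A)$, giving a surjection $\ol\W_p\surj\W_p$ inverse to $\uppi$; combined with $\uppi$ surjective and the modules being of finite type, surjectivity in both directions over the Noetherian local ring $\ol A$ forces $\uppi$ to be an isomorphism. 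Either way, once the three preparatory lemmas are in hand the proof is short.
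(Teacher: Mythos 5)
Your proof is correct and follows essentially the same route as the paper: both use Lemmas~\ref{lem:dvt=0} and~\ref{lem:xtdvs=0} to present $\W_p$ over $B=\mc{O}_{\ol{\P},p}$ as $\coker\bigl\{K\oo B\to\ol{\Omega}_p\bigr\}$ and then check that the relations from $M\oplus H$ die so that the map factors through $\ol{K}\oo B$, recovering the presentation of $\ol{\W}_p$. The only cosmetic difference is that you track this via the explicit separable basis of Lemma~\ref{lem:2nd-basis}, whereas the paper simply observes that $\pd(v_s\wedge v_t)=0$ in $\ol{\Omega}_p$ whenever $t>\ol{n}$.
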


\begin{proof}
If $B\coloneqq  A/(x_{\ol{n}+1},\ldots,x_n)$ denotes the local ring $\mc{O}_{\ol{\P},p}$
of $\ol{\P}$ at $p$, then Lemmas~\ref{lem:dvt=0} and~\ref{lem:xtdvs=0} show
that the map $\nu\colon \Omega_p \surj \W_p$ factors
through $\ol{\Omega}_p$, where $\ol{\Omega}=\Omega_{\ol{\P}}$.
We thus obtain an alternative presentation of $\W_p$, this time over
$B$, given by
\begin{equation}
\label{eq:kperp-sum}
\begin{tikzcd}[column sep=20pt]
K \oo B \ar[r, "\pd"] & \ol{\Omega}_p \ar[r, two heads] & \W_p \, .
\end{tikzcd}
\end{equation}
Observe that $\pd(v_s\wedge v_t) = 0$ if $t>\ol{n}$, and therefore the map $\pd$
factors through $\ol{K} \oo B$. Since $\ol{\W}_p$ has presentation
\begin{equation}
\label{eq:kperp-sum-bis}
\begin{tikzcd}[column sep=20pt]
\ol{K}\oo B \ar[r]& \ol{\Omega}_p \ar[r, two heads] & \ol{\W}_p\, ,
\end{tikzcd}
\end{equation}
we conclude that the natural map $\mc{W}_p \to \ol{\W}_p$ is an isomorphism,
thereby completing the proof of Proposition~\ref{prop:uppi-local-isom}.
\end{proof}

\subsection{Separability and reduced scheme structure}
\label{subsec:sep-koszul}
Next, we apply Proposition~\ref{prop:uppi-local-isom} to obtain
a characterization of the separable irreducible components of the
resonance scheme.

\begin{theorem}
\label{thm:sep->disjoint}
Each separable irreducible component of $\Rproj(V,K)$ is a reduced, isolated
component of projectivized resonance.
\end{theorem}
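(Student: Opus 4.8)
The plan is to exploit Proposition~\ref{prop:uppi-local-isom}, which is the main technical engine, and to supplement it with the scheme-theoretic consequences of the identification $\mc{W}(V,0)=\Omega(2)$. Let $\ol{\P}=\mathbf{P}(\ol{V}^{\vee})$ be a separable component of $\Rproj(V,K)$; since the component is contained in $\RR(V,K)$ and is separable, Proposition~\ref{prop:uppi-local-isom} tells us that at every point $p\in\ol{\P}$ the surjection $\uppi\colon \mc{W}(V,K)\to\mc{W}(\ol{V},\ol{K})$ is an isomorphism on stalks. The first step is to transfer this statement from sheaves on $\P$ to stalks of the module $W(V,K)$, using that $\Rproj(V,K)=\op{Proj}(S/I(V,K))$ is the scheme-theoretic support of $\mc{W}(V,K)$. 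Concretely, for any homogeneous prime $\mf{p}$ corresponding to a point of $\ol{\P}$ (other than the irrelevant ideal), the localization $W(V,K)_{\mf{p}}$ is identified with $\mc{W}(V,K)_p$ up to the usual twist, and likewise $\widetilde{W}(\ol{V},\ol{K})_{\mf{p}}\cong \mc{W}(\ol{V},\ol{K})_p$; so Proposition~\ref{prop:uppi-local-isom} gives $W(V,K)_{\mf{p}}\isom \widetilde{W}(\ol{V},\ol{K})_{\mf{p}}$.

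Granting that, the second step is to read off that $\ol{\P}$ is an \emph{isolated} component. A component of $\Rproj(V,K)$ is isolated precisely when, at its generic point, the stalk of $\mc{W}(V,K)$ is supported only there; but at every $p\in\ol{\P}$ the stalk equals the stalk of $\mc{W}(\ol{V},\ol{K})=\ol{\W}$, whose support is all of $\ol{\P}$. In particular the germ of $\Rproj(V,K)$ at a general point of $\ol{\P}$ coincides with the germ of $\ol{\P}$, so no other component of $\Rproj(V,K)$ can pass through a general point of $\ol{\P}$; equivalently, $\ol{\P}$ does not meet the union of the other components along a dense subset, which for irreducible components forces $\ol{\P}$ to be isolated. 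The third step is reducedness: at a general point $p\in\ol{\P}$ we have $\mc{W}(V,K)_p\cong \ol{\W}_p$, and from the presentation~\eqref{eq:kperp-sum-bis} together with $\mc{W}(\ol{V},0)=\Omega_{\ol{\P}}(2)$ one sees that $\ol{\W}$ restricted to the smooth locus of $\ol{\P}$ (i.e. away from the locus where $\ol{K}\to$ conormal data degenerates) is a quotient of a vector bundle by a subbundle, hence locally free, hence its annihilator ideal at $p$ is the full prime $I(\ol{V},0)$, which is radical. Thus the local ring of $\Rproj(V,K)$ at the generic point of $\ol{\P}$ is reduced, and since that is the only associated prime of $W(V,K)$ supported at $\ol{\P}$ (isolatedness kills embedded primes there), the scheme $\Rproj(V,K)$ is reduced along $\ol{\P}$.

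The main obstacle I anticipate is the passage in the third step from ``the stalk equals $\ol{\W}_p$'' to ``the stalk is $\mc{O}$-torsion-free / the annihilator is radical at the generic point.'' One has to be careful because $\ol{\W}=\mc{W}(\ol{V},\ol{K})$ need not be locally free everywhere on $\ol{\P}$; the cleanest route is to argue only at the \emph{generic} point $\eta$ of $\ol{\P}$, where $(S/I(\ol{V},\ol{K}))_{\eta}$ is a field (being the local ring of the integral scheme $\ol{\P}$ at its generic point, since $\ol{\P}=\Rproj(\ol{V},\ol{K})$ is a projective space, hence integral and reduced), and then invoke that the annihilator scheme structure is the minimal one and is stable under the closed embedding $\ol{\P}\hookrightarrow\P$ — exactly the property of $I(V,K)$ highlighted in Remark~\ref{rem:fitting}. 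This last point is where the remark's comment that ``this property will be used in the proof of Theorem~\ref{thm:sep->disjoint}'' comes in: it guarantees that the reducedness computed intrinsically on $\ol{\P}$ is not spoiled when we view $\ol{\P}$ inside $\P$. I would organize the write-up so that isolatedness is established first (purely from support considerations) and then reducedness is a one-line consequence of ``isolated $+$ generically reduced $\Rightarrow$ reduced along that component.''
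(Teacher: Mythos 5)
You have correctly identified Proposition~\ref{prop:uppi-local-isom} as the engine, and your first step (transferring the stalk isomorphism $\mc{W}(V,K)_p \cong \ol{\W}_p$ to points of $\ol{\P}$) matches the paper. The gap lies in how you extract the two conclusions from it: both of your globalization steps rest on false general principles. First, ``no other component passes through a \emph{general} point of $\ol{\P}$, hence $\ol{\P}$ does not meet the other components along a dense subset, hence $\ol{\P}$ is isolated'' is not a valid implication: two distinct irreducible components always meet (if at all) in a proper, non-dense closed subset, so excluding intersections at general points excludes nothing --- two lines crossing at a point already defeat this argument. Second, ``isolated $+$ generically reduced $\Rightarrow$ reduced along that component'' fails because of embedded primes: the annihilator of a module can be, say, $(x^2,xy)\subseteq \k[x,y]$ (the annihilator of $\k[x,y]/(x^2,xy)$), whose support is the single isolated, generically reduced component $\{x=0\}$, yet the scheme is not reduced. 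Your parenthetical ``isolatedness kills embedded primes there'' is not a theorem, and ruling out exactly such embedded behaviour is the content of the statement you are trying to prove.

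The fix --- and the paper's actual argument --- is simply to use the stalk isomorphism at \emph{every} point $p\in\ol{\P}$, which Proposition~\ref{prop:uppi-local-isom} already provides and which you even record before weakening it to general points. Since $\ol{\W}=i_*\mc{W}(\ol{V},\ol{K})$ is pushed forward from $\ol{\P}$, its stalk at $p$ is annihilated by $I(\ol{\P})_p$; hence locally at $p$ the scheme $\Rproj(V,K)$ is a closed subscheme of the reduced linear space $\ol{\P}=P_{\rm{red}}$, while set-theoretically it contains $\ol{\P}$ near $p$. These two inclusions force the germ of $\Rproj(V,K)$ at $p$ to coincide with the germ of $\ol{\P}$, for every $p\in\ol{\P}$. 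This at once shows that no other component $Q$ can pass through any point of $\ol{\P}$ (otherwise $Q$ would locally be contained in $P_{\rm{red}}$, contradicting $Q\not\subseteq P_{\rm{red}}$) and that $P=P_{\rm{red}}$. In particular the detour through local freeness of $\ol{\W}$ on a degeneracy-free locus, and the discussion of associated primes, are unnecessary.
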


\begin{proof}
Let $P=\mathbf{P}\bigl(\ol{V}^{\vee}\bigr)$ be a separable component
of $\Rproj(V,K)$ (possibly non-reduced). Suppose that
$P$ intersects some other component $Q$ (not necessarily linear), and
let $p\in P\cap Q$ be any point. Using Proposition~\ref{prop:uppi-local-isom},
we infer that locally at $p$ the resonance scheme $\Rproj(V,K)$ is a closed
subscheme of $\ol{\P}=P_{\rm{red}}$,
which is a contradiction, since $P_{\rm{red}}$ does not contain $Q$.
Therefore, $P$ is an isolated irreducible component, i.e., a connected
component of $\Rproj(V,K)$.

To prove that the scheme $P$ is reduced, consider any point $p\in P$. Applying
Proposition~\ref{prop:uppi-local-isom} as in the previous paragraph,
we deduce that locally at $p$, the component $P$ is contained in
$\ol{\P}=P_{\rm{red}}$, and so $P = P_{\rm{red}}$.
\end{proof}

A consequence of this theorem proves
(in a stronger form) Theorem~\ref{thm:sep->reduced}, part \eqref{t1-1}.

\begin{corollary}
\label{cor:sep-red}
If the whole resonance is separable, then it is also projectively disjoint
and reduced.
\end{corollary}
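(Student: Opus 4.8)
The plan is to deduce Corollary~\ref{cor:sep-red} as a more or less immediate consequence of Theorem~\ref{thm:sep->disjoint}, once we have unwound what ``the whole resonance is separable'' means. By Definition~\ref{def:separable}, separability of $\RR(V,K)$ presupposes that the resonance is linear, so we may write $\RR(V,K) = \ol{V}_1^{\vee}\cup\cdots\cup\ol{V}_k^{\vee}$ with each $\ol{V}_t^{\vee}$ a linear subspace of $V^{\vee}$, and each $\ol{V}_t^{\vee}$ separable with respect to $K$. Passing to projective space, the irreducible components of $\Rproj(V,K)$ are exactly the projective subspaces $P_t := \mathbf{P}(\ol{V}_t^{\vee})$, $t=1,\dots,k$. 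Here one should remark that a priori $\Rproj(V,K)$ carries the (possibly non-reduced) scheme structure from \eqref{eq:rproj-def}, so that each $P_t$ is an irreducible component of this scheme whose underlying reduced subscheme is $\mathbf{P}(\ol{V}_t^{\vee})$.

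Next I would apply Theorem~\ref{thm:sep->disjoint} to each component in turn. Since $\ol{V}_t^{\vee}$ is separable, Theorem~\ref{thm:sep->disjoint} tells us that $P_t$ is a reduced, isolated component of $\Rproj(V,K)$ — that is, $P_t$ is a connected component of the scheme and $P_t = (P_t)_{\mathrm{red}} = \mathbf{P}(\ol{V}_t^{\vee})$. Doing this for every $t$ from $1$ to $k$, we conclude that $\Rproj(V,K)$ is the disjoint union of its components $P_1,\dots,P_k$, each of which is reduced. Disjointness of the $P_t$ gives precisely that the resonance is projectively disjoint in the sense of Definition~\ref{def:res-lin}, and reducedness of each $P_t$, together with the fact that they are the connected components of $\Rproj(V,K)$, gives that $\Rproj(V,K)$ itself is a reduced scheme, i.e.\ the resonance is projectively reduced.

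I do not anticipate a serious obstacle here: Theorem~\ref{thm:sep->disjoint} is the substantive input, and the corollary is essentially the observation that ``separability of the whole resonance'' means ``every component is separable,'' so that the component-by-component conclusion of the theorem can be aggregated. The one point that deserves a sentence of care is the passage from ``each component is reduced and isolated'' to ``the whole scheme is reduced'': this uses that a scheme which is a finite disjoint union of its irreducible components, each of which is reduced, is itself reduced. Equivalently, there is no embedded component to worry about precisely because Theorem~\ref{thm:sep->disjoint} already rules out any overlap between $P_t$ and any other component. One should also note in passing that the degenerate cases are covered: if $\RR(V,K) = \{0\}$ then $\Rproj(V,K) = \varnothing$ is vacuously reduced and projectively disjoint (cf.\ Example~\ref{ex:zero-res}), and if $\RR(V,K) = V^{\vee}$ then there is a single component and the statement is Example~\ref{ex:zero-res} again.
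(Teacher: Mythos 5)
Your proposal is correct and matches the paper's approach: the corollary is stated there as an immediate consequence of Theorem~\ref{thm:sep->disjoint}, obtained exactly as you do by applying that theorem to each (separable) component and aggregating. Your extra remarks on the disjoint-union-of-reduced-components point and the degenerate cases are harmless elaborations of the same argument.
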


As we shall show in Example \ref{ex:proj reduced non separable},
the converse of Theorem \ref{thm:sep->disjoint} is not necessarily true.

\subsection{Decomposition of separable Koszul modules}
\label{subsec:decomp-sep}

Suppose $\mc{R}(V,K)$ is linear, with components
$\overline{V}_1^{\vee},\dots ,\overline{V}_k^{\vee}$.
For each $1\le t\le k$, the inclusion $\overline{V}_t^{\vee} \subseteq V^{\vee}$
corresponds to a linear projection, $\pi_t\colon V\onto \overline{V}_t$.
We set $\overline{K}_t \coloneqq \big(\bwedge^2\pi_t \big)(K)$,
and obtain in this way Koszul modules $W(\overline{V}_t, \overline{K}_t)$,
together with natural surjective maps
$\pi_t\colon W(V,K) \onto \widetilde{W}(\overline{V}_t, \overline{K}_t)$
as in \eqref{eq:onto-pi-W}, as well as corresponding maps of Koszul sheaves,
$\uppi_t\colon \mc{W}\onto \mc{W}_t \equalscolon \mc{W}(\overline{V}_t, \overline{K}_t)$.

The next theorem proves Theorem~\ref{thm:separable} from the Introduction.

\begin{theorem}
\label{thm:sepKoszul-iso}
Suppose $W(V,K)$ is a separable Koszul module. Then the morphism
\begin{equation}
\label{eq:Pi map}
\begin{tikzcd}[column sep=18pt]
\Pi\coloneqq ( \pi_1,\dots ,\pi_k) \colon
W(V,K) \ar[r]& \bigoplus_{t=1}^k \widetilde{W}(\overline{V}_t, \overline{K}_t),
\end{tikzcd}
\end{equation}
is an isomorphism in sufficiently large degrees.
\end{theorem}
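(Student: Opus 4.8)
The plan is to prove that $\Pi$ is an isomorphism in large degrees by checking separately that it is surjective and injective on the sheaf level away from the points where the relevant local isomorphisms fail, and then translating this back into a statement about graded modules in high degrees. First, I would record the obvious map of Koszul sheaves $\boldsymbol{\Pi}\colon \mc{W}(V,K)\to \bigoplus_{t=1}^k \mc{W}_t$ obtained by assembling the surjections $\uppi_t$ from \eqref{eq:uppi-W}. Since $\RR(V,K)$ is linear with components $\ol{V}_1^\vee,\dots,\ol{V}_k^\vee$ and $W(V,K)$ is separable, each $\ol{V}_t^\vee$ is a separable subspace contained in $\RR(V,K)$, so Theorem~\ref{thm:sep->disjoint} applies: each $\ol{\P}_t\coloneqq\mathbf{P}(\ol{V}_t^\vee)$ is a reduced connected component of $\Rproj(V,K)$, and the $\ol{\P}_t$ are pairwise disjoint. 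Consequently the supports of $\mc{W}_t$ are disjoint, and $\Rproj(V,K)=\bigsqcup_{t=1}^k \ol{\P}_t$ as schemes.

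The key local input is Proposition~\ref{prop:uppi-local-isom}: at every point $p\in\ol{\P}_t$, the map $\uppi_t$ induces an isomorphism $\mc{W}_p\isom (\mc{W}_t)_p$ on stalks, while for $s\ne t$ the stalk $(\mc{W}_s)_p$ vanishes because $p\notin\ol{\P}_s$ by disjointness. Hence at each such $p$ the component map $\boldsymbol{\Pi}_p\colon \mc{W}_p\to \bigoplus_s (\mc{W}_s)_p$ is precisely the isomorphism onto the $t$-th summand. Since $\Rproj(V,K)$ equals the union of the $\ol{\P}_t$ (that is the support of $\mc{W}$), the map $\boldsymbol{\Pi}$ is an isomorphism of sheaves on a neighborhood of $\Rproj(V,K)$; off that locus both source and all targets vanish. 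Therefore $\boldsymbol{\Pi}\colon \mc{W}(V,K)\isom \bigoplus_{t=1}^k \mc{W}_t$ is an isomorphism of coherent sheaves on $\P$.

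Finally I would pass from sheaves back to modules in high degrees using the standard dictionary: for a coherent sheaf $\FF$ on $\P=\mathbf{P}(V^\vee)$ and the finitely generated graded $S$-module $N$ inducing it, one has $N_q\cong H^0(\P,\FF(q))$ for $q\gg 0$ (the kernel and cokernel of $N\to \bigoplus_q H^0(\P,\FF(q))$ are finitely generated and annihilated by a power of the irrelevant ideal, hence vanish in large degrees). Applying this to $N=W(V,K)$ with $\FF=\mc{W}(V,K)$, and to $N=\wtl{W}(\ol{V}_t,\ol{K}_t)$ with $\FF=\mc{W}_t$ (noting $\mc{W}_t$ is the sheaf induced by $\wtl{W}(\ol{V}_t,\ol{K}_t)$ just as in \S\ref{subsec:WVK-diffs}), and taking $H^0(\P,-(q))$ of the sheaf isomorphism $\boldsymbol{\Pi}$, we get
\[
W_q(V,K)\;\cong\;H^0\bigl(\P,\mc{W}(V,K)(q)\bigr)\;\cong\;\bigoplus_{t=1}^k H^0\bigl(\P,\mc{W}_t(q)\bigr)\;\cong\;\bigoplus_{t=1}^k \wtl{W}_q(\ol{V}_t,\ol{K}_t)
\]
for all $q\gg 0$, and one checks this identification is induced by $\Pi$ itself. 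This is exactly the assertion that $\Pi$ is an isomorphism in sufficiently large degrees, and it also recovers the dimension formula of Theorem~\ref{thm:separable}.

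I expect the main obstacle to be the bookkeeping around the distinction between $W(\ol{V}_t,\ol{K}_t)$ and $\wtl{W}(\ol{V}_t,\ol{K}_t)$ — the latter is the $S$-module (not $\ol{S}_t$-module) that genuinely induces $\mc{W}_t$ on $\P$, and it is the one appearing in \eqref{eq:Pi map} — together with verifying cleanly that the sheaf isomorphism $\boldsymbol{\Pi}$ is literally the sheafification of $\Pi$, so that taking global sections in high degrees yields $\Pi$ rather than merely some abstract isomorphism. The geometric heart of the argument, namely disjointness of components and the local isomorphism, is already supplied by Theorem~\ref{thm:sep->disjoint} and Proposition~\ref{prop:uppi-local-isom}.
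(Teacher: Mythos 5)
Your argument is correct and follows essentially the same route as the paper: assemble the sheaf map $\boldsymbol{\Pi}$, use Proposition~\ref{prop:uppi-local-isom} together with the disjointness of the separable components (Theorem~\ref{thm:sep->disjoint}/Corollary~\ref{cor:sep-red}) to see it is a stalkwise isomorphism everywhere, and then invoke the standard equivalence between sheaf isomorphisms and graded-module isomorphisms in large degrees. Your explicit handling of the vanishing of $(\mc{W}_s)_p$ for $s\ne t$ and of the role of $\widetilde{W}(\ol{V}_t,\ol{K}_t)$ as the $S$-module inducing $\mc{W}_t$ merely spells out details the paper leaves implicit.
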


\begin{proof}
Consider the map of sheaves
\begin{equation}
\label{eq:ww map}
\begin{tikzcd}[column sep=18pt]
\boldsymbol{\Pi} \coloneqq ( \uppi_1,\dots ,\uppi_k) \colon
\W  \ar[r]& \bigoplus_{t=1}^k \W_t .
\end{tikzcd}
\end{equation}

Let $p\in\Rproj(V,K)$ a point; then $p$ belongs to, say, $\overline{V}_t^{\vee}$.
Since $\overline{V}_t^{\vee}$ is a separable subspace, Proposition \ref{prop:uppi-local-isom}
implies that $\uppi_t$ is an isomorphism at $p$. Moreover, outside $\Rproj(V,K)$,
the map $\uppi_t$ is also an isomorphism, since the stalks on each side are zero.
Corollary \ref{cor:sep-red}
now shows that the map $\boldsymbol{\Pi}$  is an isomorphism.
Since $\boldsymbol{\Pi}$ is the sheafification of $\Pi$, it follows
that $\Pi$ is an isomorphism in degrees $q\gg 0$.
\end{proof}

\section{Isotropic components of the resonance}
\label{sec:isotropic}

As mentioned in Section \ref{subsec:sep-koszul}, the projective
resonance can be reduced in the absence of separability. We will
show in this section that if the resonance is isotropic, then the two
conditions---projectively reduced and, respectively,
separable---are actually equivalent.

Suppose  $\ol{V}^{\vee} \subseteq V^{\vee}$ is a linear, irreducible component
of $\mc{R}(V,K)$ which is isotropic, and let~$P$ denote the corresponding linear
component of the resonance scheme $\Rproj(V,K)$. The reduced
subscheme structure on $P$ is given by $P_{\rm{red}}=\ol{\P}$, where
$\ol{\P} = \mathbf{P}\bigl(\ol{V}^{\vee}\bigr)$. The goal of this section is to analyze
the condition that $P$ is reduced. We prove the following theorem.

\begin{theorem}
\label{thm:iso-reduced}
Suppose that $\ol{V}$ is an isotropic component of $\mc{R}(V,K)$,
and let $P$ denote the corresponding component of $\Rproj(V,K)$.
The following are equivalent:
\begin{enumerate}
\item\label{it:all-p} $P$ is reduced.
\item\label{it:one-p} $P$ is generically reduced.
\item\label{it:str-iso} $\ol{V}$ is strongly isotropic.
\end{enumerate}
\end{theorem}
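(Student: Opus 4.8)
The plan is to prove the chain of implications $\eqref{it:all-p}\Rightarrow\eqref{it:one-p}\Rightarrow\eqref{it:str-iso}\Rightarrow\eqref{it:all-p}$, the last of which is essentially Theorem~\ref{thm:sep->disjoint} (strong isotropy means separable and isotropic, and a separable component is reduced). The implication $\eqref{it:all-p}\Rightarrow\eqref{it:one-p}$ is trivial. So the real content is $\eqref{it:one-p}\Rightarrow\eqref{it:str-iso}$: if the linear component $P$ is generically reduced, then $\ol{V}$ is strongly isotropic, i.e.\ by Lemma~\ref{lem:separable-multiplication} the multiplication map $\mu\colon \ol{V}^{\vee}\otimes U^{\vee}\to K^{\vee}$ is injective (where $U^{\vee}=V^{\vee}/\ol{V}^{\vee}$). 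Equivalently, by Corollary~\ref{cor:isotropic-separable}, the map $p_M\colon K\to M\cong \ol{V}\otimes U$ of \eqref{eq:projection-m} is surjective; note that since $\ol{V}^\vee$ is isotropic we have $K\subseteq\ker(\bwedge^2\pi)$, so $p_M$ is simply the composite $K\hookrightarrow \bwedge^2 V\onto M$ (projection onto the middle summand in \eqref{eq:decomp-bw2V}).

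The strategy is to work locally at a general point $p\in\ol{\mathbf P}=P_{\mathrm{red}}$ using the stalk presentation of Proposition~\ref{prop:stalk-wp}. Pick $p=[e_1]$ with $e_1\in\ol V^\vee$, complete to bases as in \S\ref{subsec:stalks-koszul}, and recall $\W_p$ has presentation $K\otimes A\xrightarrow{\pd}\Omega_p\xrightarrow{\nu}\W_p\to0$ with $\Omega_p=\bigoplus_{j=2}^n A\,dv_j$ and $\pd$ given by \eqref{eq:def-pd}. Because $\ol V^\vee$ is isotropic, for $s,t\le\ol n$ the element $v_s\wedge v_t$ lies in $K^\perp$, so $K$ contains no component in $L$; thus every element of $K$ projects into $M\oplus H$, and $\pd$ restricted to such an element $m+h$ (with $m\in M$, $h\in H$) has image $\nu(\delta_2(m))+\nu(\delta_2(h))$ where, by \eqref{eq:def-pd}, $\delta_2(M)\subseteq\sum_{t>\ol n}A\,dv_t + \mathfrak m\cdot(\text{stuff})$ and $\delta_2(H)\subseteq\mathfrak m\cdot\Omega_p$ (each basis generator $v_s\wedge v_t$ of $H$ has $s,t>\ol n$, hence $\delta_2(v_s\wedge v_t)=x_s dv_t-x_t dv_s\in\mathfrak m\,\Omega_p$). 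Reducing mod $\mathfrak m$, the fiber $\W_p\otimes\k$ is the cokernel of the map $K\to\bigoplus_{j=2}^n\k\,dv_j$ sending an element of $K$ to the linear (degree-$0$) part of $\pd$, which is precisely $p_M$ followed by the inclusion $M\otimes\k\cong\bigoplus_{t>\ol n}\k\,dv_t\hookrightarrow\bigoplus_{j=2}^n\k\,dv_j$. Hence
\[
\dim_{\k}\bigl(\W\otimes\k(p)\bigr) \;=\; (\ol n-1) \;+\; \dim\coker(p_M),
\]
where the $(\ol n-1)$ counts the $dv_j$ with $2\le j\le\ol n$ (which $p_M$ never hits). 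On the other hand, $P$ is generically reduced at $p$ means that, at the generic point of $P$, $\W$ is a free module over the local ring of $\ol{\mathbf P}$ of rank $\mathrm{rk}\,\W|_{\ol{\mathbf P}}$; by semicontinuity of fiber dimension and the fact that $p$ is general, $\dim_\k(\W\otimes\k(p))$ equals this generic rank, which by the isomorphism $\W(\ol V,0)=\Omega_{\ol{\mathbf P}}(2)$ and \eqref{eq:wqvk-op} is $\ol n-1$ when $\ol K=0$. Isotropy gives exactly $\ol K=0$. Comparing, $\dim\coker(p_M)=0$, so $p_M$ is surjective, i.e.\ $\ol V$ is strongly isotropic.

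I expect the main obstacle to be making the semicontinuity/genericity step rigorous: one must argue that generic reducedness of the scheme $P$ (a condition on the structure sheaf $\mathcal O_P=\mathcal O_{\Rproj(V,K)}$ near the generic point of $P$) forces $\W$—which is only the scheme-theoretic support's defining module up to radical—to have the ``right'' fiber dimension at a general point of $P$. The cleanest route is probably to compare the two presentations directly: by the local analysis above, at any $p\in\ol{\mathbf P}$ the stalk $\W_p$ is generated over $A$ by $dv_2,\dots,dv_n$ with relations generated (as one checks from Lemma~\ref{lem:2nd-basis}-type bookkeeping in the isotropic case) by the $\pd(\alpha)$, $\alpha\in K$; the kernel of $p_M$ contributes relations lying in $\mathfrak m\,\Omega_p$, which by Nakayama cannot kill any of the $dv_t$, so if $p_M$ is not surjective then $dv_t\neq0$ in the fiber for some $t>\ol n$, giving $\Rproj(V,K)$ a tangent direction transverse to $\ol{\mathbf P}$ at $p$—explicitly the ideal $I(V,K)_p$ fails to contain $x_t$ modulo $\mathfrak m^2$—which is incompatible with $P$ being generically reduced along $\ol{\mathbf P}$. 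I would phrase this last incompatibility by noting that, locally, $\Spec(A/I(V,K))\supseteq \ol{\mathbf P}$ and the presence of such a tangent vector shows the embedding dimension at the generic point of $P$ exceeds $\dim P$, contradicting generic reducedness; this requires a short argument that the annihilator scheme structure (Remark~\ref{rem:fitting}) detects this, which should follow because the support of $\W$ is cut out exactly by $\Ann\W$. Once $\eqref{it:one-p}\Rightarrow\eqref{it:str-iso}$ is in hand, I close the loop by invoking Theorem~\ref{thm:sep->disjoint}: strongly isotropic $\Rightarrow$ separable $\Rightarrow$ $P$ is a reduced, isolated component, giving $\eqref{it:str-iso}\Rightarrow\eqref{it:all-p}$.
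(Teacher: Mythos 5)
Your skeleton agrees with the paper's: $\eqref{it:all-p}\Rightarrow\eqref{it:one-p}$ is trivial, $\eqref{it:str-iso}\Rightarrow\eqref{it:all-p}$ is Theorem~\ref{thm:sep->disjoint}, and the content is $\eqref{it:one-p}\Rightarrow\eqref{it:str-iso}$, which both you and the paper attack by analyzing $\W_p$ at a suitable point $p=[e_1]\in\ol{\P}$ and aiming at surjectivity of $p_M$. But your execution of that implication has a genuine gap at its central step: the degree-zero part of $\pd|_{K}$ at $p$ is \emph{not} $p_M$. By \eqref{eq:def-pd}, $\delta_2(v_s\wedge v_t)=x_s\,dv_t-x_t\,dv_s\in\mf{m}\cdot\Omega_p$ for \emph{all} $2\le s,t\le n$ (including $s\le\ol{n}<t$), so reducing mod $\mf{m}$ only the $v_1\wedge v_t$ components of elements of $K$ survive. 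Hence $\W\otimes\k(p)$ computes the cokernel of the composite $K\xrightarrow{p_M}M=\ol{V}\otimes U\to\langle v_1\rangle\otimes U$ (an $(n-\ol{n})$-dimensional quotient of the $\ol{n}(n-\ol{n})$-dimensional $M$), not $\coker(p_M)$; your identification ``$M\otimes\k\cong\bigoplus_{t>\ol{n}}\k\cdot dv_t$'' is already a dimension mismatch. The consequence is not merely cosmetic: even if you knew the fiber dimension were $\ol{n}-1$ at \emph{every} point of $\ol{\P}$, you would only conclude that $\ker(\mu)$ (notation of Lemma~\ref{lem:separable-multiplication}) contains no nonzero decomposable tensor $e\otimes b$, which is strictly weaker than $\ker(\mu)=0$, i.e., than strong isotropy — a rank-two element of $\ker(\mu)$ is invisible to all the fibers. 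This is exactly why the paper, after establishing that $\uppi\colon\W\to\ol{\Omega}(2)$ is a local isomorphism at $p$, runs the dimension count \eqref{eq:local-res1}--\eqref{eq:dim pmk} over $A/\mf{m}^2$ rather than over $A/\mf{m}$: it is the first-order neighborhood (length $n$, not $1$) that sees all of $M$.

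There is a second, related gap in your ``semicontinuity'' step. Generic reducedness says the local ring of $\Rproj(V,K)$ at the generic point of $P$ is a field, hence that $\W$ is generically locally free on $P_{\rm red}$; it does \emph{not} by itself determine that rank, so the equality $\dim_\k(\W\otimes\k(p))=\ol{n}-1$ (equivalently, that $\uppi$ is an isomorphism on the fiber at $p$) is itself a nontrivial claim. The paper proves it by taking $p$ to be a reduced point lying on no component other than $P$, and showing that any $g\in V^{\vee}$ with $g\wedge f\in K^{\perp}$ (where $p=[f]$) must lie in $\ol{V}^{\vee}$, because the line $L_{f,g}$ is contained in $\Rproj(V,K)$ and $P$ is the only component through $p$ — see the Claim and diagram \eqref{eq:triangle}. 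Your proposal is missing both this geometric input and the passage to first-order data; your closing Nakayama sketch (``if $p_M$ is not surjective then some $dv_t\ne 0$ in the fiber'') fails for the same reason as above, since $p_M$ can fail to be surjective while still covering the $v_1\wedge v_t$ slice.
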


\begin{proof}
It is clear that $\eqref{it:all-p} \Rightarrow \eqref{it:one-p}$. Since strongly
isotropic components are separable, it follows from Theorem~\ref{thm:sep->disjoint}
that the implication $\eqref{it:str-iso} \Rightarrow \eqref{it:all-p}$ also holds.
The only implication left to prove is therefore $\eqref{it:one-p} \Rightarrow \eqref{it:str-iso}$.
Let $\pi\colon V \to \ol{V}$ be the associated surjective homomorphism.
Recall
that $\ol{V}^{\vee}$ is isotropic if and only if $(\bwedge^2\pi)(K) = 0$. We then
obtain as in \eqref{eq:onto-pi-W} a surjective homomorphism of Koszul modules,
$\pi\colon W(V,K) \onto W(\ol{V},0)$, and a corresponding surjection at the level of
sheaves, $\uppi\colon \mc{W}(V,K) \onto \mc{W}(\ol{V},0) = \ol{\Omega}(2)$,
where $\ol{\Omega}=\Omega_{\ol{\P}}$ is viewed as a
sheaf on $\P$ via the closed immersion $\ol{\P}\hookrightarrow\P$.

Suppose now that $p=[e_1]$ is a reduced point of $P$, where $0\neq e_1\in\ol{V}^{\vee}$.
Upon choosing bases for $\ol{V}^{\vee}$ and $V^{\vee}$ as in Section~\ref{sect:separate},
consider the decomposition $\bwedge^2 V = L \oplus M \oplus H$ given by
\eqref{eq:decomp-bw2V} and \eqref{eq:decomp-lmod},
and let $p_M\colon K\to M$ denote the restriction to $K$
of the second-component  projection $\bwedge^2 V\to  M$.
As shown in Corollary \ref{cor:isotropic-separable},
condition \eqref{it:str-iso} is equivalent to $p_M$ being surjective;
the remainder of the proof will focus on establishing this surjectivity.
Since the reduced locus of $P$ is non-empty, it is dense in $P$, and so it is not
contained in any other irreducible component of $\Rproj(V,K)$. We will therefore
assume that the point $p$ is chosen to lie only on the component $P$ of
$\Rproj(V,K)$. It follows that $\mc{W}(V,K)$ is supported on $\ol{\P}$ locally
at $p$. We prove the following claim, which is the key technical point of our proof.

\begin{claim*}
\label{lem:uppi-loc-iso}
The map $\uppi\colon \mc{W}(V,K) \onto \ol{\Omega}(2)$ is a local isomorphism
at the point $p$.
\end{claim*}

\begin{proof}[Proof of Claim]
Since $\mc{W}(V,K)$ is scheme-theoretically supported on $\ol{\P}$ at $p$,
and since $\ol{\Omega}$ is a locally free sheaf on $\ol{\P}$, we can think of
$\uppi$ locally at $p$ as a split surjection of sheaves on $\ol{\P}$.
To conclude, it is then enough to
check that it is an isomorphism on the fiber at $p$.

Recall from \eqref{eq:wqvk-op} that
$\W(V,K) =\coker\bigl\{ K\oo \mc{O}_{\P} \lra \Omega(2)\bigr\}$.  We need to
prove the exactness on the fiber at $p$ of the exact sequence
\begin{equation}
\label{eq:W-to-Omega}
\begin{tikzcd}[column sep=20pt]
K\oo \mc{O}_{\P} \ar[r]& \Omega(2) \ar[r]& \ol{\Omega}(2) \ar[r]& 0 \, .
\end{tikzcd}
\end{equation}
Exactness on the right follows because
$\ol{\P}\hookrightarrow\P$ is a closed immersion. For the next argument, which
is independent on the earlier choice of bases, we write $f=e_1\in V^{\vee}$, so that $p=[f]$.
Recalling that $\ol{V}^{\vee}\subseteq V^{\vee}$,
there exists a linear form
$\ol{f}\colon \ol{V}\to\k$ such that $f=\ol{f}\circ \pi$.
The restriction of \eqref{eq:W-to-Omega} yields a complex,
\begin{equation*}
\label{eq:fvee-kvee}
\begin{tikzcd}[column sep=19pt]
K  \ar[r]& \ker(f) \ar[r]& \ker(\ol{f}),
\end{tikzcd}
\end{equation*}
where the second map is induced by $\pi$, and the first one is induced
by the Koszul differential $a\wedge b \mapsto f(a)\cdot b-f(b)\cdot a$.
By duality, its exactness reduces to that of
\begin{equation}
\label{eq:3term-Kvee}
\begin{tikzcd}[column sep=19pt]
 \ker(\ol{f})^{\vee}   \ar[r]& \ker(f)^{\vee}  \ar[r]& K^{\vee},
\end{tikzcd}
\end{equation}
where  the
second is the map $\psi$ in the commutative diagram:
\begin{equation}
\label{eq:triangle}
\begin{tikzcd}[column sep=30pt,row sep=28pt]
V^{\vee} \ar[r,"\wedge f"] \ar[dr, two heads] &
\bwedge^2 V^{\vee} \ar[r, two heads] & K^{\vee}  .\\
& \ker(f)^{\vee} \ar[u, "\wedge f"'] \ar{ur}[swap]{\psi}& &
\end{tikzcd}
\end{equation}
The exactness of \eqref{eq:3term-Kvee} amounts to the following statement:
if $g\in V^{\vee}$ has the property that $g\wedge f\in K^{\perp}$, then the
restriction of $g$ to $\ker(f)$ is obtained via composition with $\pi$ from
a linear form on $\ker(\ol{f})$.  This follows if we can prove
that $g\in\ol{V}^{\vee}$. In view of \eqref{eq:def-WVK}, the condition
that $g\wedge f\in K^{\perp}$ implies
that $\bigl\langle g,f \bigr\rangle \subseteq \mc{R}(V,K)$, which in turn
implies that the line $L_{f,g}$ through $p=[f]$
and $[g]$ lies in $\Rproj(V,K)$. Since the only component of $\Rproj(V,K)$
containing $p$ is $P$, it follows that $L_{f,g}\subseteq P$.  In
particular, $g\in\ol{V}^{\vee}$,  completing the proof of the claim.
\end{proof}

Having established the main claim, we proceed with the proof of the
surjectivity of $p_M$. We let $A=\mc{O}_{\P,p}$ as in \eqref{eq:akk}.
Using the presentation of $\W_p$ from \eqref{eq:k-otimes-a}, the
description \eqref{eq:omegap} of the stalk at $p$ of $\Omega$,
and the analogous description of $\ol{\Omega}_p$, we derive
from the main claim the existence of an exact sequence
\begin{equation}
\label{eq:local-res1}
\begin{tikzcd}[column sep=20pt]
K \oo A \ar[r, "\pd"]& \bigoplus_{j=2}^n A \cdot dv_j \ar[r]&
 \bigoplus_{j=2}^{\ol{n}} B \cdot d\ol{v}_j \ar[r]& 0 \, ,
\end{tikzcd}
\end{equation}
where $B = A/(x_{\ol{n}+1},\dots,x_n)\cong\mc{O}_{\ol{\P},p}$,
$\ol{v}_j = \pi(v_j)\in\ol{V}$, and $d\ol{v}_j = \ol{v}_j - x_j\cdot\ol{v}_1$.
Since $\ol{V}^{\vee}$ is isotropic, it follows that $K^{\perp}\supseteq L^{\vee}$,
and therefore $K\subseteq M\oplus H$, which yields
\begin{equation}
\label{eq:kerpM}
\ker(p_M) = K \cap H.
\end{equation}

If we let $\mf{m}=(x_2,\dots,x_n)$ denote the maximal ideal of $A$, we
infer using \eqref{eq:def-pd} that $\pd$ sends $(K\cap H)\oo A$ into
$\bigoplus_{j=\ol{n}+1}^n \mf{m}\cdot dv_j$. We then obtain from \eqref{eq:local-res1}
a commutative diagram,
\begin{equation}
\label{eq:kh diagram}
\begin{tikzcd}[column sep=16pt, row sep=12pt]
(K \cap H)\oo A \ar[r] \ar[d] & \ds\bigoplus_{j=\ol{n}+1}^n
\mf{m}\cdot dv_j \ar[d] \ar[r] & 0 \ar[d] &  \\
K \oo A \ar[r] & \ds\bigoplus_{j=2}^n A \cdot dv_j \ar[r]
& \ds\bigoplus_{j=2}^{\ol{n}} B \cdot d\ol{v}_j \ar[r] & 0 \, ,
\end{tikzcd}
\end{equation}
where the vertical maps are inclusions. Using \eqref{eq:kerpM}, we
infer that $p_M(K) \cong K/(K\cap H)$.  Taking cokernels of the vertical maps
in diagram \eqref{eq:kh diagram} gives rise to an exact sequence,
\begin{equation}
\label{eq:pmk-seq}
\begin{tikzcd}[column sep=18pt]
p_M(K) \oo A \ar[r, "\partial"] & \bigg(\ds\bigoplus_{j=2}^{\ol{n}}
A \cdot dv_j\bigg) \bigoplus \bigg(\ds\bigoplus_{j=\ol{n}+1}^n
\k \cdot dv_j\bigg) \ar[r]& \ds\bigoplus_{j=2}^{\ol{n}} B \cdot d\ol{v}_j \ar[r]& 0 \, ,
\end{tikzcd}
\end{equation}
where $A/\mf{m} \cong \k$. Tensoring over $A$ with $A/\mf{m}^2$ preserves
right exactness; therefore, since
$\k\oo_A A/\mf{m}^2 = \k$ and $B\oo_A A/\mf{m}^2 = B/\mf{m}^2$,
we obtain a further exact sequence,
{\small
\begin{equation}
\label{eq:local-res2}
\begin{tikzcd}[column sep=18pt]
p_M(K) \oo A/\mf{m}^2 \ar[r, "\partial"]
& \bigg(\ds\bigoplus_{j=2}^{\ol{n}} A/\mf{m}^2 \cdot dv_j\bigg) \bigoplus
\bigg(\ds\bigoplus_{j=\ol{n}+1}^n \k \cdot dv_j\bigg)\ar[r]
& \ds\bigoplus_{j=2}^{\ol{n}} B/\mf{m}^2 \cdot d\ol{v}_j \ar[r]& 0\, .
\end{tikzcd}
\end{equation}
}

It follows from \eqref{eq:def-pd} that $p_M(K) \oo (\mf{m}/\mf{m}^2)\subseteq \ker(\pd)$,
so \eqref{eq:local-res2} yields the exact sequence,
{\small
\begin{equation}
\label{eq:local-res3}
\begin{tikzcd}[column sep=18pt]
p_M(K) \oo A/\mf{m} \ar[r, "\pd"]&\bigg(\ds\bigoplus_{j=2}^{\ol{n}}
A/\mf{m}^2 \cdot dv_j\bigg) \bigoplus \bigg(\ds\bigoplus_{j=\ol{n}+1}^n \k \cdot
dv_j\bigg) \ar[r]& \ds\bigoplus_{j=2}^{\ol{n}} B/\mf{m}^2 \cdot d\ol{v}_j  \ar[r]& 0\, .
\end{tikzcd}
\end{equation}
}

Viewing this as an exact sequence of $\k$-vector spaces, and noting that
$\dim_{\k}(A/\mf{m}^2) = n$ and $\dim_{\k}(B/\mf{m}^2) = \ol{n}$, we conclude that
\begin{align}
\label{eq:dim pmk}
\dim(p_M(K)) &\geq (\ol{n}-1)\cdot n + (n-\ol{n}) - (\ol{n}-1)\cdot \ol{n} = \ol{n}\cdot(n-\ol{n}) = \dim(M).
\end{align}
Since $p_M(K)\subseteq M$, this shows that $p_M$ is
surjective, thus concluding the proof of Theorem~\ref{thm:iso-reduced}.
\end{proof}

As a consequence of this theorem, we infer that separability is equivalent to reducedness
in the isotropic case, thereby completing the proof of Theorem~\ref{thm:sep->reduced}, part \eqref{t1-2}.

\begin{corollary}
\label{cor:sep=reduced-if-iso}
 Suppose that $\mc{R}(V,K)$ is isotropic. Then $\mc{R}(V,K)$ is
 strongly isotropic if and only if $\Rproj(V,K)$ is reduced.
\end{corollary}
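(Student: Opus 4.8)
The plan is to reduce the statement to Theorem~\ref{thm:iso-reduced}, applied one irreducible component at a time. By Definition~\ref{def:isotropic-res}, the hypothesis that $\mc{R}(V,K)$ is isotropic means that it is linear, so I would begin by writing $\mc{R}(V,K)=\ol{V}_1^{\vee}\cup\cdots\cup\ol{V}_k^{\vee}$ with each $\ol{V}_t^{\vee}$ an isotropic linear subspace of $V^{\vee}$, and letting $P_t\subseteq\Rproj(V,K)$ denote the corresponding irreducible component, with $(P_t)_{\rm{red}}=\mathbf{P}\bigl(\ol{V}_t^{\vee}\bigr)$.

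For the implication ``strongly isotropic $\Rightarrow$ reduced'': by Definition~\ref{def:separable}, strong isotropy of the resonance entails in particular that $\mc{R}(V,K)$ is separable, i.e. every $\ol{V}_t^{\vee}$ is separable. Then Corollary~\ref{cor:sep-red} applies verbatim and yields that $\Rproj(V,K)$ is reduced (and projectively disjoint). Equivalently, one could feed each strongly isotropic $\ol{V}_t^{\vee}$ into the implication $\eqref{it:str-iso}\Rightarrow\eqref{it:all-p}$ of Theorem~\ref{thm:iso-reduced} to get that each $P_t$ is reduced, and then invoke Theorem~\ref{thm:sep->disjoint} to see that the $P_t$ are pairwise disjoint, so that their union carries the reduced structure.

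For the converse, I would assume $\Rproj(V,K)$ is reduced and argue componentwise. For each $t$, the local ring of $\Rproj(V,K)$ at the generic point $\eta_t$ of $P_t$ is reduced; since $\eta_t$ corresponds to a minimal prime of $S/I(V,K)$, that local ring is also zero-dimensional, hence a field, so $P_t$ is generically reduced. Because $\ol{V}_t^{\vee}$ is isotropic, the implication $\eqref{it:one-p}\Rightarrow\eqref{it:str-iso}$ of Theorem~\ref{thm:iso-reduced} shows that $\ol{V}_t^{\vee}$ is strongly isotropic, in particular separable. As this holds for all $t$, the resonance $\mc{R}(V,K)$ is both isotropic and separable, hence strongly isotropic by Definition~\ref{def:separable}.

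I do not expect a genuine obstacle: the substance is entirely contained in Theorem~\ref{thm:iso-reduced}, and the deduction is bookkeeping. The only point requiring a moment's care is translating the global properties of $\Rproj(V,K)$ (reducedness, strong isotropy) into the componentwise statements consumed by the cited results — in particular the routine fact that a reduced ambient scheme has generically reduced irreducible components, since a reduced Noetherian ring localizes to a field at each of its minimal primes.
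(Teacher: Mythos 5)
Your proposal is correct and follows essentially the same route as the paper: the forward implication via separability and Corollary~\ref{cor:sep-red} (equivalently Theorem~\ref{thm:sep->reduced}\eqref{t1-1}), and the converse via Theorem~\ref{thm:iso-reduced} applied to each isotropic component. The extra bookkeeping you supply — that reducedness of $\Rproj(V,K)$ gives generic reducedness of each component by localizing at minimal primes — is exactly the (implicit) content of the paper's one-line appeal to Theorem~\ref{thm:iso-reduced}.
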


\begin{proof}
 If the resonance variety is strongly isotropic, then it is separable, so by
Theorem~\ref{thm:sep->reduced} part \eqref{t1-1}, we conclude that $\Rproj(V,K)$
 is reduced. The converse follows from Theorem~\ref{thm:iso-reduced}.
\end{proof}

\section{Resonance for vector bundles on curves}
\label{subsec:bundles}

\subsection{Vector bundles on projective varieties}
\label{subsec:vb-proj}
In this section we study the scheme-theoretic properties of resonance varieties
associated to vector bundles. We fix a smooth projective variety $X$,
a vector bundle $E$ on $X$ and consider the  map
\begin{equation}
\label{eq:det2}
\begin{tikzcd}[column sep=18pt]
d_2 \colon \bwedge^2 H^0(X, E)\ar[r]& H^0\bigl(X,\bwedge^2 E\bigr).
\end{tikzcd}
\end{equation}

Set $V\coloneqq H^0(X,E)^{\vee}$ and $K^{\perp}\coloneqq \ker(d_2)\subseteq\bwedge^2 V^{\vee}$.
Following \cite{AFRW}, we consider the associated Koszul module $W(X,E)\coloneqq W(V,K)$
and resonance variety $\RR(X,E)\coloneqq \RR(V,K)$. As pointed out in \cite[Proposition 4.2]{AFRW},
a section $0\neq s\in H^0(X,E)$ lies in $\RR(X,E)$ if an only it spans a \emph{subpencil}\/ of $E$, that is,
$s\in H^0(X,A)\subseteq H^0(X,E)$, where $A\hookrightarrow E$ is a line subbundle with
$h^0(X,A)\geq 2$. Thus, one is led to describe the resonance variety $\RR(X,E)$ in terms
of the variety of subpencils of $E$.

We observe that the projective resonance $\Rproj(X,E)$ of a
vector bundle enjoys a nice geometric property.

\begin{proposition}
\label{prop:max}
Let $E$ be a vector bundle over a smooth projective variety $X$. Then
\begin{enumerate}[itemsep=2pt, topsep=-1pt]
\item \label{mx1}
The projective resonance scheme $\Rproj(X,E)$ comes equipped with
a regular morphism $\chi\colon \Rproj(X,E)\rightarrow \mathcal{Z}$, where
\[
\mathcal{Z}=\Bigl\{A\hookrightarrow E: \text{$A$ is a saturated line
subbundle of $E$ with $ h^0(X,A)\geq 2$} \Bigr\}.
\]
Moreover, all fibres of $\chi$ are positive dimensional projective spaces.

\item \label{mx2}
$\Rproj(X,E)$ is linear if and only if  $\mathcal{Z}$ is finite.
\end{enumerate}
\end{proposition}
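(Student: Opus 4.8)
The plan is to first describe $\Rproj(X,E)$ set-theoretically via saturations, then build the morphism $\chi$ from a universal family, and finally deduce part~\eqref{mx2} from the rigidity of projective space. For $0\neq s\in H^0(X,E)$, set $A_s:=\ker\big(E\to(E/\mathcal O_X s)/\Tors\big)$, the saturation of $\mathcal O_X\cdot s$ inside $E$. Since $X$ is smooth, $A_s$ is reflexive of rank one, hence a line bundle, and $E/A_s$ is torsion-free, so $A_s$ is a saturated line subbundle of $E$. The first point I would establish is that $A_s$ is the \emph{unique} saturated line subbundle $A$ with $s\in H^0(X,A)$: indeed $\mathcal O_X s\subseteq A\subseteq E$ with $A$ saturated forces $A_s\subseteq A$ (minimality of the saturation), and then $A/A_s$ is torsion but embeds into the torsion-free sheaf $E/A_s$, hence $A=A_s$. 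Combined with the description of $\RR(X,E)$ in terms of subpencils recalled above, this gives, set-theoretically,
\[
\Rproj(X,E)=\bigcup_{A\in\mathcal Z}\mathbf P\big(H^0(X,A)\big)\subseteq\mathbf P=\mathbf P\big(H^0(X,E)\big),
\]
a union of \emph{pairwise disjoint} linear subspaces — disjoint because a common point $[s]$ of $\mathbf P(H^0(X,A))$ and $\mathbf P(H^0(X,A'))$ would force $A=A_s=A'$ — each of dimension $h^0(X,A)-1\geq 1$.

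\textbf{Construction of $\chi$ (part \eqref{mx1}).} I would realize $\mathcal Z$ as a locally closed subscheme of the disjoint union, over the possible Hilbert polynomials of line subbundles of $E$, of Quot schemes $\mathrm{Quot}_X(E)$: the open locus of quotients $E\twoheadrightarrow Q$ with $Q$ torsion-free of rank $\rk(E)-1$ (so the kernel is a saturated line subbundle), intersected with the closed locus where $h^0$ of the kernel is at least $2$. By the universal property of $\mathrm{Quot}$, producing $\chi$ amounts to exhibiting a quotient of $p_X^*E$ on $X\times\Rproj(X,E)$, flat over $\Rproj(X,E)$, with fibre $E/A_s$ over $[s]$. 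For this I would use the universal section: the tautological inclusion $\mathcal O_{\mathbf P}(-1)\hookrightarrow H^0(X,E)\otimes\mathcal O_{\mathbf P}$ followed by evaluation $H^0(X,E)\otimes\mathcal O_X\to E$ gives on $X\times\mathbf P$ a section $\tilde s$ of $p_X^*E\otimes p_{\mathbf P}^*\mathcal O_{\mathbf P}(1)$ specializing to $s$ along $\{[s]\}\times X$; restricting to $X\times\Rproj(X,E)$ and taking the relative saturation of $\im(\tilde s)$ yields the desired family, whence $\chi$. The only delicate point is checking that this relative saturation is flat over $\Rproj(X,E)$ (equivalently, that it genuinely defines a morphism to $\mathrm{Quot}$); over $\Rproj(X,E)_{\mathrm{red}}$ this is the standard relative-saturation construction, which is what the applications require — this is the main obstacle. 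For the fibres: $\chi^{-1}(A)$ is exactly the locus of $\Rproj(X,E)$ where $\tilde s$ factors through the pullback of $A\hookrightarrow E$, equivalently where the point of $\mathbf P$ lies in $\mathbf P(H^0(X,A))$; so $\chi^{-1}(A)=\Rproj(X,E)\cap\mathbf P(H^0(X,A))=\mathbf P(H^0(X,A))$ (with its reduced structure, since $\mathbf P(H^0(X,A))\subseteq\Rproj(X,E)$ set-theoretically), a projective space of dimension $h^0(X,A)-1\geq 1$ by the definition of $\mathcal Z$. Note also that $\chi$ is surjective: every $A\in\mathcal Z$ has $h^0(X,A)\geq 2$, so $\mathbf P(H^0(X,A))\neq\varnothing$ and $\chi$ maps it to $A$.

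\textbf{Linearity versus finiteness (part \eqref{mx2}).} If $\mathcal Z$ is finite, the displayed formula writes $\Rproj(X,E)$ as a finite union of linear subspaces of $\mathbf P$, so its underlying variety is linear (these are in fact its pairwise disjoint irreducible components). Conversely, assume $\Rproj(X,E)$ is linear, with irreducible components $\mathbf P_1,\dots,\mathbf P_k$, each a linear subspace $\mathbf P_i\cong\mathbf P^{d_i}$. Restricting $\chi$ gives a morphism $\mathbf P^{d_i}\to\mathcal Z$ whose fibre over $A$ is the linear subspace $\mathbf P_i\cap\mathbf P(H^0(X,A))$. For $[s]$ in the dense open subset of $\mathbf P_i$ avoiding the other components, the projective space $\mathbf P(H^0(X,A_s))$ is irreducible, contained in $\Rproj(X,E)$ and contains $[s]$, hence lies inside $\mathbf P_i$; thus the fibre through such $[s]$ equals $\mathbf P(H^0(X,A_s))$ and is positive-dimensional, so the general fibre of $\chi|_{\mathbf P_i}$ is positive-dimensional. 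But a non-constant morphism out of $\mathbf P^{d_i}$ pulls back an ample class to a positive multiple of the hyperplane class, hence is finite onto its image and cannot have a positive-dimensional fibre; therefore $\chi|_{\mathbf P_i}$ is constant. Consequently $\mathcal Z=\chi\big(\Rproj(X,E)\big)=\bigcup_{i=1}^k\chi(\mathbf P_i)$ is finite.
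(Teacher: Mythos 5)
Your proof is correct and follows the same overall strategy as the paper: saturate the rank-one subsheaf generated by $s$ to define $\chi$, identify the fibres with the linear spaces $\mathbf{P}\bigl(H^0(X,A)\bigr)$, and use the rigidity of projective space for part~\eqref{mx2}. The details differ in instructive ways, though. For the fibre description, the paper argues via projective geometry: it shows that if two lines $\ell_1,\ell_2\subseteq\Rproj(X,E)$ meet in a point, then the sections spanning them generate a common rank-one subsheaf, so the linear span of $\ell_1$ and $\ell_2$ lies in $\Rproj(X,E)$; combined with saturatedness of $A$ this gives $\chi^{-1}(A)=\mathbf{P}H^0(X,A)$. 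You instead prove that $A_s$ is the \emph{unique} saturated line subbundle whose space of sections contains $s$, which yields directly the decomposition of $\Rproj(X,E)$ as a disjoint union of the $\mathbf{P}\bigl(H^0(X,A)\bigr)$ --- a cleaner and slightly stronger statement. You are also more scrupulous than the paper about what ``regular morphism to $\mathcal{Z}$'' means: the paper simply sets $\chi([s])\coloneqq[A\hookrightarrow E]$ without giving $\mathcal{Z}$ a scheme structure, whereas you realize $\mathcal{Z}$ inside a union of Quot schemes and build $\chi$ from the universal section; the flatness of the relative saturation that you flag is a genuine subtlety (the Hilbert polynomial of $A_s$ can jump), but the paper does not address it either, and working on reduced structures suffices for the statement as it is used. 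Finally, the paper dismisses part~\eqref{mx2} as a direct consequence of part~\eqref{mx1}; your argument --- the general fibre of $\chi$ restricted to a linear component is positive-dimensional, while a non-constant morphism out of $\mathbf{P}^{d}$ has finite fibres, so $\chi$ is constant on each component and $\mathcal{Z}$ is finite --- is exactly the intended justification, spelled out correctly.
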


\begin{proof}
Given $[s]\in \Rproj(X,E)$, with $0\neq s\in H^0(X,E)$, let $A$ be the saturation
of the rank one subsheaf of $E$ generated by $s$. Since $[s]\in \Rproj(X,E)$,
there exists $s'\in H^0(X,E)$ which is not a scalar multiple of $s$, with
$d_2(s\wedge s')=0$. This means that $s'$ is a multiple of $s$ at the generic point,
and therefore $s'$ defines a section of $A$, in particular $h^0(X,A)\geq 2$.
We set
\[
\chi\bigl([s]\bigr)\coloneqq [A\hookrightarrow E]\in \mathcal{Z}.
\]

In order to establish \eqref{mx1}, it suffices to prove that if $\ell_1$ and $\ell_2$
are two lines in $\mathbf{P}=\mathbf{P}(V^{\vee})$ with $\ell_1\cap\ell_2=\{[s]\}$
and $\ell_1,\ell_2\subseteq \Rproj(X,E)$, then $\overline{\ell_1, \ell_2}\subseteq \Rproj(X,E)$.
Each line $\ell_i$ corresponds to a subspace
$\langle s, s_i\rangle\subseteq H^0(X, E)$ that generates a rank-one subsheaf
$A_i\hookrightarrow E$ with $h^0(X, A_i)\geq 2$. In a general fibre $E(x)$ of
$E$ the values $s(x), s_i(x)$ generate a subspace of dimension at most one.
Therefore, the space spanned by the sections $s, s_1, s_2$ generates a
rank-one subsheaf $A$ of $E$, and
$\ell_1,\ell_2\subseteq \mathbf{P}H^0(X,A)\subseteq \Rproj(X,E)$.
The sheaf $A$ being saturated, $\chi^{-1}(A)=\P H^0(X, A)\subseteq \Rproj(X,E)$.
Part \eqref{mx2}  is a direct consequence of \eqref{mx1}.
\end{proof}

\subsection{Rank $2$ vector bundles on curves}
\label{subsec:vb-curves}
We now specialize to the case when $X$ is a smooth curve of genus $g\geq 2$. For a line bundle
$L\in \Pic^d(X)$, let $\SU_X(2,L)$ be the moduli space of $S$-equivalence classes
of semistable rank $2$ vector bundles $E$ on $X$ with $\bwedge^2 E\cong L$.
It is known that $\SU_X(2,L)$ is a Fano variety of dimension $3g-3$, see \cite{Ram}.
We shall undertake a more systematic study of the geometry of the subpencils
of given degree of a rank $2$ semistable vector bundle.

For $\frac{g+2}{2}\leq a\leq g+1$, we denote by
$W^1_a(X)\coloneqq \bigl\{A\in \Pic^a(X): h^0(X,A)\geq 2\bigr\}$ the
Brill--Noether variety of pencils of degree $a$ on $X$,
see \cite[Chapter 4]{ACGH}. If $X$ is a general curve
of genus $g$, then $W^1_a(X)$ is equidimensional of
dimension $2a-g-2$ and the Petri map
\begin{equation}
\label{Petrimap}
\begin{tikzcd}[column sep=18pt]
\mu_A\colon H^0(X,A)\otimes H^0(X, \omega_X\otimes A^{\vee})\ar[r]& H^0(X,\omega_X)
\end{tikzcd}
\end{equation}
obtained by multiplication of sections is injective for each pencil $A\in W^1_a(X)$,
see \cite[Theorem 1.7]{ACGH} or \cite{Gi}.
For a vector bundle $E\in \SU_X(2,L)$ and a line subbundle $A\hookrightarrow E$
(which by twisting by $A^{\vee}$ implies that $H^0\bigl(X, E\otimes A^{\vee}\bigr)\neq 0$),
we introduce the \emph{twisted Petri map},
\begin{equation}
\label{eq:twPetri}
\beta=\beta_{E,A}\colon H^0\bigl(X,E\otimes A^{\vee}\bigr)\otimes
H^0\bigl(X, E^{\vee}\otimes \omega_X\otimes A\bigr)\longrightarrow H^0(X,\omega_X),
\end{equation}
obtained by composing the multiplication map of global sections followed by the map
induced at the level of global sections by the twist by $\omega_X$ of the trace map
$E\otimes E^{\vee}\rightarrow \mathcal{O}_X$.

We first describe in terms of extensions when the resonance $\RR(X,E)$ is strongly isotropic.

\begin{lemma}
\label{lemma:strongis}
Let $E$ be a rank $2$ vector bundle on a curve $X$ expressed as an extension
\begin{equation}
\label{eq:extrk2}
\begin{tikzcd}[column sep=18pt]
0\ar[r]& A \ar[r]& E \ar[r, "j"]& L\otimes A^{\vee}\ar[r]& 0,
\end{tikzcd}
\end{equation}
where $A$ is a pencil on $X$ with $h^0(X,A)=2$. Let $F\coloneqq \abs{A}$ be the
base locus of $A$ and let $e\in \Ext^1\bigl(L\otimes A^{\vee}, A\bigr)\cong
H^0\bigl(X, \omega_X+L-2A\bigr)^{\vee}$ be the extension class corresponding to $E$.
Then $H^0(X,A)\subseteq H^0(X,E)$ is not strongly isotropic if and only if there
exists a non-zero element $u\in H^0\bigl(X, L(-2A+F)\bigr)$ such that
\begin{equation}
\label{eq:smoothness1}
u\cdot \bigl(H^0(X,A)\otimes H^0(X, \omega_X\otimes A^{\vee})\bigr) \subseteq \ker(e).
\end{equation}
\end{lemma}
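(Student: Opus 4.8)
The plan is to invoke the multiplication-map criterion of Lemma~\ref{lem:separable-multiplication}. In the present situation $V^{\vee}=H^0(X,E)$, $K^{\perp}=\ker(d_2)$ with $d_2\colon\bwedge^2 H^0(X,E)\to H^0(X,L)$ (using $\bwedge^2 E\cong L$), and $\ol{V}^{\vee}=H^0(X,A)\subseteq H^0(X,E)$ via the inclusion $A\hookrightarrow E$. First I would verify that $\ol{V}^{\vee}$ is isotropic: since $h^0(X,A)=2$, the line $\bwedge^2 H^0(X,A)$ is spanned by $s_1\wedge s_2$ for a basis $\{s_1,s_2\}$ of $H^0(X,A)$, and $d_2(s_1\wedge s_2)$ vanishes identically because $s_1(x),s_2(x)$ both lie in the line $A(x)\subseteq E(x)$ for every $x$. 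By Lemma~\ref{lem:separable-multiplication}, $H^0(X,A)$ is then strongly isotropic if and only if the multiplication map $\mu\colon\ol{V}^{\vee}\otimes U^{\vee}\to K^{\vee}$ is injective, where $U^{\vee}=H^0(X,E)/H^0(X,A)$ and $K^{\vee}$ is identified with $\operatorname{im}(d_2)\subseteq H^0(X,L)$. So the whole argument reduces to deciding when $\mu$ fails to be injective.

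To make $\mu$ concrete I would use the long exact cohomology sequence of \eqref{eq:extrk2},
\[
0\longrightarrow H^0(X,A)\longrightarrow H^0(X,E)\xrightarrow{j}H^0(X,L\otimes A^{\vee})\xrightarrow{\partial}H^1(X,A),
\]
in which $\partial$ is cup product with the extension class $e\in\Ext^1(L\otimes A^{\vee},A)=H^1(X,A^{2}\otimes L^{-1})$; thus $U^{\vee}$ is identified with $W_0\coloneqq\operatorname{im}(j)=\ker(\partial)\subseteq H^0(X,L\otimes A^{\vee})$. Using the fibrewise isomorphism $\bwedge^2 E\cong A\otimes(L\otimes A^{\vee})$ (legitimate because one of the two factors lies in $A$), one checks that $\mu(\alpha\otimes j(\beta))=\alpha\cdot j(\beta)$ for $\alpha\in H^0(X,A)$ and $\beta\in H^0(X,E)$; that is, $\mu$ is the restriction to $H^0(X,A)\otimes W_0$ of the ordinary multiplication map $m\colon H^0(X,A)\otimes H^0(X,L\otimes A^{\vee})\to H^0(X,L)$. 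Hence it remains to decide when $\ker(m)\cap\bigl(H^0(X,A)\otimes W_0\bigr)\ne 0$.

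The kernel of $m$ I would compute by the base-point-free pencil trick. Writing $\sigma_F$ for the canonical section cutting out the base divisor $F=|A|$ and setting $s_i'=s_i/\sigma_F\in H^0(X,A(-F))$, the pencil $\langle s_1',s_2'\rangle$ is base-point-free; tensoring its Koszul sequence with $L\otimes A^{\vee}$ and passing to cohomology identifies $\ker(m)$ with $H^0\bigl(X,L(-2A+F)\bigr)$, the element attached to $u$ being $\alpha_u\coloneqq s_1\otimes(s_2'u)-s_2\otimes(s_1'u)$ with $s_i'u\in H^0(X,L\otimes A^{\vee})$. Since $s_1,s_2$ are linearly independent, $\alpha_u$ lies in $H^0(X,A)\otimes W_0$ precisely when both $s_1'u$ and $s_2'u$ belong to $W_0$. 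I would then translate membership in $W_0$ by Serre duality: $H^1(X,A)\cong H^0(X,\omega_X\otimes A^{\vee})^{\vee}$, and the compatibility of cup product with Serre duality gives, for $w\in H^0(X,L\otimes A^{\vee})$, that $e\cup w=0$ if and only if $w\cdot H^0(X,\omega_X\otimes A^{\vee})\subseteq\ker(e)$ inside $H^0(X,\omega_X+L-2A)$. Applying this to $w=s_1'u$ and $w=s_2'u$, and noting that $\langle s_1',s_2'\rangle\cdot u\cdot H^0(X,\omega_X\otimes A^{\vee})$ equals $u\cdot\mu_A\bigl(H^0(X,A)\otimes H^0(X,\omega_X\otimes A^{\vee})\bigr)$ — the products $s\eta$ in the image of the Petri map $\mu_A$ of \eqref{Petrimap} all vanish along $F$, so multiplication by $u\in H^0(X,L(-2A+F))$ does land in $H^0(X,\omega_X+L-2A)$ — I recover exactly the stated condition, with failure of injectivity of $\mu$, i.e. failure of strong isotropy of $H^0(X,A)$, equivalent to the existence of such a nonzero $u$.

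The main obstacle is the bookkeeping in the pencil-trick step: one must carry the base divisor $F$ through all the line-bundle twists to see that $\ker(m)$ is governed by $H^0(X,L(-2A+F))$ rather than by $H^0(X,L-2A)$, and to check that the sections $s_i'u$, $(s_i'u)\eta$ and $u\,s\,\eta$ land in precisely the spaces where $W_0$ and $\ker(e)$ live. The remaining ingredients — the isotropy check, the identification of $\mu$ with multiplication, and the Serre-duality description of $W_0$ — are routine.
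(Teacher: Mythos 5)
Your proposal is correct and follows essentially the same route as the paper: reduce to the injectivity of the multiplication map via Lemma~\ref{lem:separable-multiplication}, identify $U^{\vee}$ with $\im(j)\subseteq H^0(X,L\otimes A^{\vee})$, compute $\ker(m)$ by the base-point-free pencil trick (yielding $H^0(X,L-2A+F)$), and describe $\im(j)$ as $\{s : s\cdot H^0(X,\omega_X\otimes A^{\vee})\subseteq\ker(e)\}$ via the coboundary/Serre duality. Your treatment of the base divisor $F$ and the explicit verification of isotropy are slightly more detailed than the paper's, but the argument is the same.
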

Note that in (\ref{eq:smoothness1}) the left hand side is viewed as a linear subspace of
$H^0(X, \omega_X+L-2A)$ via the multiplication map.

\begin{proof}
We apply Lemma \ref{lem:separable-multiplication}, where with the notations introduced there,
$V^{\vee}=H^0(X,E)$,
$\overline{V}^{\vee}=H^0(X,A)$,
$U^{\vee}=j\bigl(H^0(X,E)\bigr)\subseteq H^0(X, L\otimes A^{\vee})$, and
$K^{\vee}=\im(d)$, with $d$ being the determinant map given by \eqref{eq:det2}.
The map
\[
\begin{tikzcd}[column sep=18pt]
\mu\colon H^0(X,A)\otimes j\bigl(H^0(X,E)\bigr)\ar[r]& H^0(X,L)
\end{tikzcd}
\]
described in Lemma \ref{lem:separable-multiplication} can be viewed as the
restriction of the multiplication map of sections
$H^0(X,A)\otimes H^0(X, L\otimes A^{\vee})\rightarrow H^0(X,L)$.
Observe also that $H^0(X,A)\subseteq H^0(X,E)$ is clearly isotropic,
so it remains to determine when it is a  separable subspace.

Assume  the map $\mu$ is not injective. If $H^0(X, A)=\langle t_1, t_2\rangle$, via the
Base Point Free Pencil Trick \cite[p.~126]{ACGH}, every element in $\ker(\mu)$ is of the
form $t_1\otimes \bigl(t_2\cdot (-u)\bigr)+t_2\otimes \bigl(t_1\cdot u\bigr)$,
where $0\neq u\in H^0(X,L-2A+F)$. If
\[
\begin{tikzcd}[column sep=18pt]
m\colon H^0(X, L-A)\otimes H^0(X, \omega_X-A)\ar[r]& H^0(X, \omega_X+L-2A)
\end{tikzcd}
\]
denotes the multiplication map, then
\[
\im\Bigl\{j\colon H^0(E)\rightarrow H^0(L\otimes A^{\vee})\Bigr\}=
\Big\{s\in H^0(L\otimes A^{\vee}):
m\bigl(s\otimes H^0(\omega_X\otimes A^{\vee})\bigr)\subseteq \ker(e)\Bigr\},
\]
where the extension class $e$ is viewed as an element of $H^0(X,\omega_X+L-2A)^{\vee}$.
It follows that if $H^0(X,A)$ is not separable, then
$\im(\mu_A)\cdot \langle u\rangle \subseteq \ker(e)$, and this completes the proof.
\end{proof}

\begin{corollary}
\label{cor:pencilsrig}
Let $E$ be a rank $2$ vector bundle on $X$ as before, such that $\RR(X,E)$ is
strongly isotropic. If $A\hookrightarrow E$ is a subpencil with $h^0(X,A)=2$,
then $h^0(X,E\otimes A^{\vee})=1$.
\end{corollary}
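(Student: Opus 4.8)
The plan is a proof by contradiction that reduces the claim to Lemma~\ref{lemma:strongis}. Note first that, since $\bwedge^2 E\cong L$, twisting the extension \eqref{eq:extrk2} by $A^{\vee}$ produces a short exact sequence
\[
0\longrightarrow\mathcal{O}_X\longrightarrow E\otimes A^{\vee}\longrightarrow L\otimes A^{-2}\longrightarrow 0 ,
\]
so that $h^0(X,E\otimes A^{\vee})\geq h^0(X,\mathcal{O}_X)=1$ always holds, and $h^0(X,E\otimes A^{\vee})=1$ is equivalent to the injectivity of the connecting map $\delta\colon H^0(X,L-2A)\to H^1(X,\mathcal{O}_X)$ in the associated long exact sequence. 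So I would suppose instead that $\delta$ has a nonzero element $t$ in its kernel, and aim to contradict the strong isotropicity of $\RR(X,E)$.

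The next step is to translate ``$t\in\ker(\delta)$'' into a condition amenable to Lemma~\ref{lemma:strongis}. The map $\delta$ is cup product against the extension class $e\in\Ext^1(L\otimes A^{\vee},A)\cong H^0(X,\omega_X+L-2A)^{\vee}$, which is precisely the class appearing in Lemma~\ref{lemma:strongis}. Combining the identification $H^1(X,\mathcal{O}_X)\cong H^0(X,\omega_X)^{\vee}$ with the compatibility of cup product and Serre duality, one sees that $t\in\ker(\delta)$ exactly when $t\cdot H^0(X,\omega_X)\subseteq\ker(e)$, the multiplication landing in $H^0(X,\omega_X+L-2A)$. Since $\im(\mu_A)\subseteq H^0(X,\omega_X)$ and $H^0(X,L-2A)\subseteq H^0(X,L-2A+F)$ (the divisor $F$ of Lemma~\ref{lemma:strongis} being effective), the element $u\coloneqq t$, now viewed inside $H^0(X,L-2A+F)$, is nonzero and satisfies $u\cdot\im(\mu_A)\subseteq t\cdot H^0(X,\omega_X)\subseteq\ker(e)$. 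By Lemma~\ref{lemma:strongis}, this means that $H^0(X,A)\subseteq H^0(X,E)$ is \emph{not} strongly isotropic.

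Finally I would derive the contradiction. As $\RR(X,E)$ is strongly isotropic, it is in particular linear, so by Proposition~\ref{prop:max} the set $\mathcal{Z}$ is finite and the irreducible components of $\Rproj(X,E)$ are exactly the linear subspaces $\mathbf{P}H^0(X,B)$ attached to the saturated line subbundles $B\hookrightarrow E$ with $h^0(X,B)\geq 2$; since $A$ is one such $B$, the subspace $H^0(X,A)$ is a strongly isotropic component of the resonance, contradicting the previous paragraph. Hence $h^0(X,E\otimes A^{\vee})=1$. I expect the only genuinely delicate point to be the middle step: checking carefully that the connecting homomorphism of the twisted sequence is cup product with the \emph{same} class $e$ that enters Lemma~\ref{lemma:strongis}, and that Serre duality converts its kernel into the membership condition $t\cdot H^0(X,\omega_X)\subseteq\ker(e)$; the remaining steps are bookkeeping.
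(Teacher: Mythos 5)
Your proof is correct and follows essentially the same route as the paper's: twist the extension \eqref{eq:extrk2} by $A^{\vee}$, observe that $h^0(X,E\otimes A^{\vee})\geq 2$ forces a nonzero element $u$ in the kernel of the coboundary map $H^0(X,L-2A)\to H^0(X,\omega_X)^{\vee}$, translate this via Serre duality into $u\cdot H^0(X,\omega_X)\subseteq\ker(e)$, and invoke Lemma~\ref{lemma:strongis} to contradict strong isotropicity. Your extra care in checking that $\im(\mu_A)\subseteq H^0(X,\omega_X)$ and that $H^0(X,A)$ is genuinely a component of the resonance (via Proposition~\ref{prop:max}) fills in details the paper leaves implicit.
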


\begin{proof}
We consider an extension such as the one in Lemma \ref{lemma:strongis}, with $h^0(X,A)=2$;
we may clearly assume that $A$ is base point free. Assuming $h^0(X,E\otimes A^{\vee})\geq 2$,
we obtain the non-injectivity of the coboundary map $\partial\colon H^0(X,L-2A)\rightarrow H^0(X, \omega_X)^{\vee}$ obtained by twisting the extension (\ref{eq:extrk2}) by $A^{\vee}$ and taking
cohomology in the corresponding exact sequence. It follows that there exists a non-zero element
$u\in H^0(X, L-2A)$ such that $u \cdot H^0(X, \omega_X)\subseteq \ker(e)$, where the left hand
side is viewed as a subspace of $H^0(X, \omega_X+L-2A)$ via the multiplication map. Applying
Lemma \ref{lemma:strongis}, this contradicts the fact that $H^0(X,A)$ is a strongly isotropic component
of $\RR(X,E)$.
\end{proof}

\begin{lemma}
\label{lemma:nohigher}
Let $X$ be a general curve of genus $g$ and $L\in \mathrm{Pic}^d(X)$ be a line
bundle of degree $d\leq 3g+1$. Then a general vector bundle $E\in \SU_X(2,L)$
carries no line subbundles $A\hookrightarrow E$ satisfying either
$\deg(A)>g+1$ or $h^0(X,A)>2$.
\end{lemma}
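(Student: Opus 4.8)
The plan is to reduce the statement to two dimension counts inside the moduli space $\SU_X(2,L)$, which is irreducible of dimension $3g-3$ by \cite{Ram}. Since a general $E\in\SU_X(2,L)$ is stable, it suffices to show that, for each integer $a$, the locus $\Sigma_a$ of stable bundles admitting a saturated line subbundle $A\hookrightarrow E$ with $\deg A=a\ge g+2$, and the locus $\Sigma'_a$ of stable bundles admitting a saturated line subbundle $A\hookrightarrow E$ with $\deg A=a\le g+1$ and $h^0(X,A)\ge 3$, each have dimension $<3g-3$. A general $E$ then lies outside the closures of the finitely many nonempty such loci, hence has all its saturated line subbundles of degree $\le g+1$ and with $h^0\le 2$; passing to saturations (which only increases both $\deg A$ and $h^0(X,A)$) then disposes of arbitrary line subbundles as well.

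To estimate these loci I use extensions. If $A\hookrightarrow E$ is a saturated line subbundle of a stable $E\in\SU_X(2,L)$, then stability forces $a:=\deg A<d/2$, and the sequence $0\to A\to E\to L\otimes A^\vee\to 0$ is non-split, since otherwise $L\otimes A^\vee$, of degree $d-a>d/2$, would destabilize $E$. Thus $E$ is the extension classified by a point of $\mathbb{P}\Ext^1(L\otimes A^\vee,A)=\mathbb{P}H^1(X,2A-L)$; and since $\deg(2A-L)=2a-d<0$ we have $h^0(X,2A-L)=0$, so $h^1(X,2A-L)=d-2a+g-1$ by Riemann--Roch, independently of $A$. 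Consequently, for any subvariety $T\subseteq\Pic^a(X)$, the stable bundles admitting a saturated line subbundle $A$ with $\deg A=a$ and $[A]\in T$ are covered by a family of extensions of dimension $\dim T+(d-2a+g-2)$, so this locus has dimension at most $\dim T+(d-2a+g-2)$.

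Now the two counts. For $\Sigma_a$ take $T=\Pic^a(X)$: then $\dim\Sigma_a\le g+(d-2a+g-2)=2g-2+d-2a$, which for $d\le 3g+1$ and $a\ge g+2$ is $\le 3g-5<3g-3$. For $\Sigma'_a$ take $T=W^2_a(X)=\{A\in\Pic^a(X):h^0(X,A)\ge 3\}$. For a general curve, Brill--Noether theory (\cite[Chapter 4]{ACGH}) gives $W^2_a(X)=\emptyset$ unless $\rho(g,2,a)=3a-2g-6\ge 0$, in which case $\dim W^2_a(X)=\rho(g,2,a)$ — the bound $a\le g+1$ keeping $\rho\le g-3$, away from the degenerate regime $W^2_a=\Pic^a$. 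Hence $\dim\Sigma'_a\le(3a-2g-6)+(d-2a+g-2)=a+d-g-8$, which for $a\le g+1$ and $d\le 3g+1$ is $\le 3g-6<3g-3$. Since only finitely many integers $a$ yield a nonempty $\Sigma_a$ or $\Sigma'_a$, a general $E$ avoids all of them, which proves the lemma.

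The only external inputs are $\dim\SU_X(2,L)=3g-3$ and the Brill--Noether dimension and non-existence statements for $W^2_a(X)$ on a general curve; the remaining steps are elementary, so I do not anticipate a serious obstacle. The point requiring care is the bookkeeping over the ranges of $a$: one should in particular observe that the borderline case $\deg A=g+1$, $h^0(X,A)=2$ — for which the analogous count gives only the bound $3g-3$ — is correctly \emph{not} excluded, consistently with Theorem~\ref{thm:sepvb2}\eqref{higgs2}.
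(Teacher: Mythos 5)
Your proof is correct and follows essentially the same strategy as the paper's: both arguments bound the dimension of the locus of bundles arising as extensions $0\to A\to E\to L\otimes A^{\vee}\to 0$ with $A$ ranging over a suitable Brill--Noether locus, and compare with $\dim \SU_X(2,L)=3g-3$. The differences are minor. For the bound $\deg(A)\le g+1$ the paper invokes Laumon's result that a general (very stable) $E$ has maximal line subbundle degree $\lfloor\tfrac{d+g-1}{2}\rfloor-g+1$, whereas you reach the same conclusion by running the extension count with $T=\Pic^a(X)$; the two are equivalent, your version being self-contained and the paper's quoting a sharper known fact. For the case $h^0(X,A)\ge 3$, you observe that stability forces $2a<d$, hence $h^0(X,2A-L)=0$ and $\dim\P\Ext^1(L\otimes A^{\vee},A)=d-2a+g-2$ outright; this lets you work with $W^2_a(X)$ alone (using $W^r_a\subseteq W^2_a$) and avoid the paper's separate Clifford-inequality case for $h^0(X,2A-L)\ge 2$, which is in any event vacuous for saturated subbundles of stable bundles. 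Your bookkeeping ($3g-5$ and $3g-6$ versus $3g-3$) checks out, and the reduction to saturated subbundles at the start is correctly handled.
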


\begin{proof}
Suppose we are given an extension
$0\rightarrow A\rightarrow E\rightarrow L\otimes A^{\vee}\rightarrow 0$,
where $A$ is a line bundle of degree $a$. Using results of Laumon \cite{La}
(see also \cite{LN}), the generic bundle $E\in \SU_X(2,L)$ is very stable in which case
the maximal degree of a line subbundle of $E$ equals $\lfloor \frac{d+g-1}{2}\rfloor -g+1$.
Since $d\leq 3g+1$, we quickly obtain $a\leq g+1$.

In order to deal with the second statement we perform a parameter count. Assume that for
every $E\in \SU_X(2,L)$, we have an extension as above with $h^0(X,A)=r+1\geq 3$,
that is, with $A\in W^r_a(X)$. Thus $E$ corresponds to an extension class
$e\in \P \Ext^1(L\otimes A^{\vee}, A)$. If $h^0(X,2A-L)\leq 1$, then, by
Riemann--Roch, $h^0(X, \omega_X+L-2A)\leq g+d-2a$. It follows that
the number of parameters on which vector bundles $E$ appearing as
such extensions is bounded above by
\begin{align*}
\dim \P \Ext^1\bigl(L\otimes A^{\vee}, A)+\dim W^r_a(X)
&\leq g-1+d-2a+g-(r+1)(g-a+r)\\
&=-r^2-1+(a-g-1)r+g-a+d\\
&\leq 4g-a-r^2-(g+1-a)r\\
&\leq 2g+a-6 \leq 3g-5 < \dim \SU_X(2,L).
\end{align*}

If, on the other hand, $h^0(X, 2A-L)\geq 2$, we can apply Clifford's inequality and
write $h^0(X, \omega_X+L-2A)\leq g-a+\frac{d}{2}$; since $d\leq 3g+1$, one
again obtains that a general stable vector bundle $E\in \SU_X(2,L)$
does not appear in this way.
\end{proof}

\subsection{The variety of subpencils of a vector bundle}
\label{subsec:var-sub}
Before stating the next result, recall that
$\theta\in H^2\bigl(\Pic^a(X),\Q\bigr)$ is the class of the
theta divisor, see \cite[Chapter 7]{ACGH}.

\begin{proposition}
\label{prop:twistedBN}
Let $X$ be a general curve of genus $g$. Fix a positive integer $d$, a line bundle
$L\in \Pic^d(X)$, and a vector bundle $E\in \SU_X(2,L)$.

\begin{enumerate}[itemsep=3.5pt]
\item \label{bi}
For $\frac{d-2g+2}{2}\leq a\leq g+1$, each irreducible component of the variety of
subpencils
\[
W^1_a(E)\coloneqq \Bigl\{A\in W^1_a(X): h^0(X, E\otimes A^{\vee})\geq 2 \Bigr\}
\]
has dimension at least $d-3g-1$.

\item  \label{bii}
If $W^1_a(E)$ is of the expected dimension $d-3g-1$, its cohomology class equals
\[
[W^1_a(E)]=2^{2g+2a-d-1}\frac{\theta^{4g-d+1}}{(2g+2a-d-1)!\cdot (g-a+1)!\cdot
(g-a+2)!}\in H^*\bigl(\Pic^a(X),\Q\bigr).
\]

\item  \label{biii}
If $E$ is very stable and $\RR(X,E)$ is strongly isotropic, let  $A\in W^1_a(E)$ be such
that $h^0(X,A)=2$. Then $W^1_a(E)$ is smooth of dimension $d-3g-1$ at the point $[A]$ if and only if
there exists no element $0\neq \upsilon \in H^0(X,A)\otimes H^0(X, \omega_X\otimes A^{\vee})$
such that
\begin{equation}
\label{eq:smoothness2}
\upsilon \cdot H^0(X, L-2A)\subseteq \ker(e).
\end{equation}
\end{enumerate}
\end{proposition}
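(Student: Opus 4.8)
The plan is to derive all three parts from the standard dictionary for degeneracy loci (as in \cite{ACGH}, Chapters~IV and~VIII), applied on $\Pic^a(X)$ to the universal maps attached to a Poincar\'e line bundle $\mathcal{A}$ on $\Pic^a(X)\times X$ and to $p_X^*E\otimes\mathcal{A}^{-1}$. Inside $\Pic^a(X)$ the locus $W^1_a(E)$ is the intersection of the classical Brill--Noether locus $W^1_a(X)$ with the twisted locus $\mathcal{D}(E)=\bigl\{A: H^0(X,E\otimes A^\vee)\neq 0\bigr\}$, each being the degeneracy locus of a two-term complex of vector bundles obtained by the usual descent of $Rp_*$ along $p\colon\Pic^a(X)\times X\to\Pic^a(X)$, of respective expected codimensions $2(g-a+1)$ and $1-\chi(E\otimes A^\vee)=2g+2a-d-1$. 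For $X$ general, classical Brill--Noether theory gives that $W^1_a(X)$ is reduced of the expected dimension $2a-g-2$ and Cohen--Macaulay, so the degeneracy locus cut on it by the second complex has every component of codimension at most $2g+2a-d-1$; hence every component of $W^1_a(E)$ has dimension at least $(2a-g-2)-(2g+2a-d-1)=d-3g-1$, which is part~\eqref{bi}.

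For part~\eqref{bii}, when this intersection is proper one has $[W^1_a(E)]=[W^1_a(X)]\cdot[\mathcal{D}(E)]$ in $H^*\bigl(\Pic^a(X),\Q\bigr)$. The first factor is the classical class $[W^1_a(X)]=\theta^{2(g-a+1)}/\bigl((g-a+1)!\,(g-a+2)!\bigr)$. For the second, Thom--Porteous expresses $[\mathcal{D}(E)]$ as the Chern class $c_{2g+2a-d-1}$ of the index bundle $-Rp_*\bigl(p_X^*E\otimes\mathcal{A}^{-1}\bigr)$; computing its Chern character by Grothendieck--Riemann--Roch from $\operatorname{ch}(E)=2+d\eta$ (with $\eta$ the point class of $X$), $\operatorname{ch}(\mathcal{A}^{-1})$, and $\operatorname{td}(X)=1+(1-g)\eta$, the only term surviving in codimension $2g+2a-d-1$ equals, after the standard $\theta$-calculus on the Jacobian ($\gamma^2=-2\eta\theta$ and $\int_{\Pic^a(X)}\theta^g=g!$), the class $(2\theta)^{2g+2a-d-1}/(2g+2a-d-1)!$\,. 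Multiplying the two factors (the exponents add to $4g-d+1$) gives the stated formula, the power of $2$ reflecting that $E$ has rank $2$; here the only non-formal points are the GRR computation and the properness and reducedness of the intersection for general $(X,E)$.

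Part~\eqref{biii} is where the hypotheses on $E$ enter. Write $E$ as an extension $0\to A\to E\to L\otimes A^\vee\to 0$ with class $e\in\Ext^1(L\otimes A^\vee,A)=H^0(X,\omega_X+L-2A)^\vee$, so $T_{[A]}\Pic^a(X)=H^0(X,\omega_X)^\vee$. Because $W^1_a(E)$ is the scheme-theoretic intersection $W^1_a(X)\cap\mathcal{D}(E)$, its Zariski tangent space at $[A]$ is the intersection of the two tangent spaces, which by the standard description of tangent spaces to these degeneracy loci is $\bigl(\operatorname{Im}\mu_A+\operatorname{Im}\beta_{E,A}\bigr)^\perp$, where $\mu_A$ is the Petri map \eqref{Petrimap} and $\beta_{E,A}$ the twisted Petri map \eqref{eq:twPetri} (at the point $[A]$ with $h^0(A)=2$). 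By part~\eqref{bi}, $W^1_a(E)$ is smooth of dimension $d-3g-1$ at $[A]$ if and only if $\dim\bigl(\operatorname{Im}\mu_A+\operatorname{Im}\beta_{E,A}\bigr)=4g-d+1$. I would then record three facts: (a) for $X$ general $\mu_A$ is injective, so $\dim\operatorname{Im}\mu_A=2(g-a+1)$; (b) since $\RR(X,E)$ is strongly isotropic, Corollary~\ref{cor:pencilsrig} gives $h^0(E\otimes A^\vee)=1$, spanned by the section corresponding to $A\hookrightarrow E$, and chasing the twist of the extension and its Serre dual identifies $\operatorname{Im}\beta_{E,A}=\bigl\{\phi\in H^0(\omega_X): \phi\cdot H^0(L-2A)\subseteq\ker(e)\bigr\}=(\operatorname{Im}\partial_e)^\perp$ for the coboundary $\partial_e\colon H^0(L-2A)\to H^1(\mathcal{O}_X)$; (c) a nonzero section of $\omega_X\otimes L^{-1}\otimes A^{2}=\mathcal{H}om\bigl(L\otimes A^\vee,\,A\otimes\omega_X\bigr)$ would produce a nonzero nilpotent Higgs field $E\twoheadrightarrow L\otimes A^\vee\to A\otimes\omega_X\hookrightarrow E\otimes\omega_X$, so very stability forces $h^1(L-2A)=0$, whence $\ker\beta_{E,A}=0$. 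Feeding (a)--(c) into $\dim(\operatorname{Im}\mu_A+\operatorname{Im}\beta_{E,A})=\dim\operatorname{Im}\mu_A+\dim\operatorname{Im}\beta_{E,A}-\dim(\operatorname{Im}\mu_A\cap\operatorname{Im}\beta_{E,A})=(4g-d+1)-\dim(\operatorname{Im}\mu_A\cap\operatorname{Im}\beta_{E,A})$ shows that the required equality holds precisely when $\operatorname{Im}\mu_A\cap\operatorname{Im}\beta_{E,A}=0$; by the injectivity of $\mu_A$ and the description in (b) — via the base-point-free pencil trick, exactly as in the proof of Lemma~\ref{lemma:strongis} — this is equivalent to the nonexistence of a nonzero $\upsilon\in H^0(A)\otimes H^0(\omega_X\otimes A^\vee)$ with $\upsilon\cdot H^0(L-2A)\subseteq\ker(e)$, which is the assertion of \eqref{biii}.

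The main obstacle is part~\eqref{biii}: carefully tracking the Serre-duality identifications needed to pin down $\operatorname{Im}\beta_{E,A}$ as $(\operatorname{Im}\partial_e)^\perp$, and recognizing that very stability is exactly what removes the excess coming from $h^1(L-2A)$ (equivalently, from nilpotent Higgs fields supported on the subpencil). Parts~\eqref{bi} and~\eqref{bii} are formal consequences of the theory of degeneracy loci once the two universal complexes on $\Pic^a(X)$ are written down, the only substantive input being the Cohen--Macaulayness and properness statements for general $(X,E)$.
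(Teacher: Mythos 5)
Your proposal is correct and follows essentially the same route as the paper: parts \eqref{bi} and \eqref{bii} via the standard degeneracy-locus dimension bound and Porteous/Chern-class computation on the Jacobian (the paper sets up a single evaluation-map degeneracy locus over $W^1_a(X)$ using an auxiliary divisor $D$ and quotes $c(-\EE)=e^{2\theta}$ rather than intersecting two loci and running GRR, but the content is identical), and part \eqref{biii} via the tangent-space identification $T_{[A]}W^1_a(E)=(\im\mu_A+\im\beta_{E,A})^{\perp}$, the dimension count reducing smoothness to $\im\mu_A\cap\im\beta_{E,A}=0$, and the identification of $\im\beta_{E,A}$ with $\{\phi: \phi\cdot H^0(L-2A)\subseteq\ker(e)\}$ via the coboundary of the twisted extension. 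Your explicit observation that very stability kills $h^1(L-2A)$ through nilpotent Higgs fields is exactly the mechanism the paper invokes when asserting injectivity of $\beta_{E,A}$.
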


Note that the left hand side in \eqref{eq:smoothness2} is viewed via the multiplication map as
a subspace of $H^0(X, \omega_X+L-2A)$. Also note that Theorem \ref{thm:sepvb2} asserts
that the hypotheses on the vector bundle $E$ appearing in part (3)
of Proposition \ref{prop:twistedBN} are satisfied for a general vector bundle
$E\in \SU_X(2,L)$.

\begin{proof}
Denoting by $\PP$ the restriction to $X\times W^1_a(X)$ of the Poincar\'{e} bundle on
$X\times \Pic^a(X)$, we have that $\PP|_{X\times \{A\}}\cong A$,
for every $A\in W^1_a(X)$. Let
\[
\pi_1\colon X\times W^1_a(X)\rightarrow X \: \mbox{ and }\:
\pi_2\colon X\times W^1_a(X)\rightarrow W^1_a(X)
\]
be the two projection maps.

We fix an effective divisor $D\coloneqq p_1+\cdots+p_{b}$  of large degree
$b\coloneqq \deg(D)\gg 0$  and consider the following vector bundles
over $W^1_a(X)$,
\[
\EE\coloneqq \bigl(\pi_2\bigr)_*\Bigl(\pi_1^*\bigl(E(D)\bigr)\otimes \PP^{\vee}\Bigr)
\: \mbox{ and } \:
\FF\coloneqq\bigl(\pi_2)_*\Bigl(\pi_1^*(E(D))\otimes \PP^{\vee}|_{D}\Bigr).
\]
Note that $\rk(\EE)=d-2a+2-2g+2b$ and $\rk(\FF)=2b$ and that our
assumption on $a$ amounts to the inequality $\rk(\EE)\leq \rk(\FF)$.
There exists a vector bundle morphism $\chi\colon \EE\rightarrow \FF$ which fibrewise
corresponds to the evaluation map
\[
\begin{tikzcd}[column sep=18pt]
H^0\Bigl(X, E(D)\otimes A^{\vee}\Bigr)\ar[r]&
H^0\Bigl(X, E(D)\otimes A^{\vee}|_{D}\Bigr).
\end{tikzcd}
\]
Then we can realize $W^1_a(E)$ as the locus where $\chi$ fails to be injective.
Using the general theory of degeneracy loci, cf.~\cite[Chapter 3]{ACGH}, we
conclude that each component of $W^1_a(E)$ has dimension at least
\begin{align*}
\dim  W^1_a(X)-\rk(\FF)+\rk(\EE)-1
&=2a-g-2-2b+(d-2a+2-2g+2b)-1\\=d-3g-1.
\end{align*}
This proves part \eqref{bi}.

Applying the Porteous formula (see \cite[Theorem 4.2]{ACGH}), we compute the
virtual class of $W^1_a(E)$ (which equals its actual cohomology class when
$\dim W^1_a(E)=d-3g-1$ as expected), and we find that
\[
\bigl[W^1_a(E)]^{\mathrm{virt}}=c_{2g+2a-d-1}(\FF-\EE)
=c_{2g+2a-d-1}(-\EE),
\]
where we used that $\FF$ is algebraically equivalent to the trivial bundle over $W^1_a(X)$,
and thus $c(\FF)=1$, see \cite[p.~309]{ACGH}. Furthermore,
$c(-\EE)=e^{\mathrm{rk}(E)\cdot \theta}=e^{2\theta}$,
whereas it is well-known that
\[
[W^1_a(X)]=\frac{1}{(g-a+1)!\cdot (g-a+2)!} \, \theta^{2g-2a+2}\in
H^{2(2g-2a+2)}\bigl(\Pic^d(X), \Q\bigr),
\]
see \cite[Theorem 4.4]{ACGH}. Putting all these facts together, we obtain the equalities
\begin{align*}
\bigl[W^1_a(E)\big]^{\mathrm{virt}}&=c_{2a+2g-d-1}\bigl(e^{2\theta}\bigr)\big|_{W^1_a(X)}\\
&=\frac{2^{2a+2g-d-1}}{(2g+2a-d-1)!}\cdot \frac{\theta^{4g-d+1}}{(g-a+1)!\cdot (g-a+2)!}.
\end{align*}
This finishes the proof of part \eqref{bii}.

We now proceed to prove \eqref{biii} and begin by describing the tangent space
of $W^1_a(E)$ at a point $[A]$ using the usual identification
$T_{[A]}\bigl(\mathrm{Pic}^d(X)\bigr)\cong H^1(X, \mathcal{O}_X)\cong
H^0(X,\omega_X)^{\vee}$. From Brill--Noether theory we know that
$T_{[A]}(W^1_a(X))=\bigl(\im(\mu_A)\bigr)^{\perp}\subseteq H^1(X, \mathcal{O}_X)$,
see \cite[Chapter 4]{ACGH}. Consider the cup product map
\[
\begin{tikzcd}[column sep=20pt]
\cup\colon H^0(X, E\otimes A^{\vee})\otimes H^1(X, \mathcal{O}_X)\ar[r]
& H^1(X, E\otimes A^{\vee}),
\end{tikzcd}
\]
write $S\coloneqq \Spec\bigl(\k[t]/(t^2)\bigr)$, and denote by $\mathcal{A}$ the line
bundle on $X\times S$ corresponding to the deformation of the line bundle $A$
parametrized by a tangent vector $\varphi\in H^1(X, \mathcal{O}_X)$. Then via a
Kodaira--Spencer argument (see also \cite[\S 3.2]{HHN}), the section
$s\in H^0(X, E\otimes A^{\vee})$ can be extended to a section
$0\neq \widetilde{s}\in H^0(X\times S, E\otimes \mathcal{A}^{\vee})$
if and only $s\cup \varphi=0$.  We conclude that the tangent space
at the point $[A]$ of $W^1_a(E)$ consists of those vectors
$\varphi\in T_{[A]}\bigl(W^1_a(X)\bigr)$ such that
$s\cup \varphi=0\in H^1(X, E\otimes A^{\vee})$, for every
$s\in H^0(X,E\otimes A^{\vee})$. Via Serre duality, we have that
$H^1(X,E\otimes A^{\vee})\cong H^0\bigl(X,E^{\vee}\otimes \omega_X\otimes A\bigr)^{\vee}$;
therefore, we obtain
\begin{equation}
\label{eq:tgspace}
T_{[A]}\bigl(W^1_a(E)\bigr)=\Bigl\{\varphi \in H^0(X,\omega_X)^{\vee}:
\varphi|_{H^0(X,A)\otimes H^0(X,\omega_X\otimes A^{\vee})+\im(\beta_{E,A})}=0\Bigr\},
\end{equation}
where $\beta_{E,A}$ is the twisted Petri map introduced in (\ref{eq:twPetri}).

We continue with the proof of \eqref{biii} and assume  $E\in \SU_X(2,L)$ is a very stable
vector bundle such that $\RR(X,E)$ is strongly isotropic and choose $[A]\in W^1_a(E)$
to be an element such that $h^0(X,A)=2$. Using Corollary \ref{cor:pencilsrig}, we have that
$h^0(X, E\otimes A^{\vee})=1$, whereas the very stability of $E$ guarantees
that the map $\beta\coloneqq \beta_{E,A}$ is in fact injective; in particular,
\[
\dim  \im(\beta)\geq h^0(X, E^{\vee}\otimes \omega_X\otimes A)=2a+2g-d-1.
\]
Since $X$ is general, the Petri map $\mu_A$ is injective, hence
$\dim \im(\mu_A)=2(g-a+1)$ and
\[
\dim \im(\beta)+\dim \im(\mu_A)=4g-d+1.
\]
Thus $W^1_a(E)$ is smooth of the expected dimension $d-3g-1$ at the point $[A]$ if and only if
\begin{equation}
\label{eq:int}
\im(\beta)\cap \im(\mu_A)=0.
\end{equation}

Assume \eqref{eq:int} does not hold, in which case there are non-zero sections
$s\in H^0(X, E\otimes A^{\vee})$ and $t\in H^0\bigl(X, E^{\vee}\otimes \omega_X\otimes A)$
such that $\beta(s\otimes t)\in H^0(X,A)\otimes H^0(X, \omega_X\otimes A^{\vee})\subseteq H^0(X, \omega_X)$.

In order to make this condition more explicit, we write down the exact sequence obtained by tensoring \eqref{eq:extrk2}, using that  $E\cong E^{\vee}(L)$ and then taking cohomology:

\begin{tikzcd}[column sep=20pt]
0\ar[r]&  H^0\bigl(X, E^{\vee}\otimes \omega_X\otimes A\bigr)  \ar[rr, "\beta(s\otimes -)"]
&& H^0(X, \omega_X)  \ar[r, "\pd"] & H^0(X, L-2A)^{\vee}
\end{tikzcd}

Assume $\beta(s\otimes t)\in H^0(X,A)\otimes H^0(X, \omega_X\otimes A^{\vee})$ for
$t\in H^0\bigl(X, E^{\vee}\otimes \omega_X\otimes A\bigr)$. We set
$\upsilon\coloneqq \beta(s\otimes t)\in H^0(X,A)\otimes H^0(X,\omega_X\otimes A^{\vee})$
and denote by
\begin{equation}
\label{eq:nu}
\begin{tikzcd}[column sep=20pt]
\nu\colon \Bigl(H^0(X,A)\otimes H^0(X, \omega_X\otimes A^{\vee})\Bigr)\otimes
H^0(X,L-2A)\ar[r]& H^0(X,\omega_X)
\end{tikzcd}
\end{equation}
the multiplication map, where we identify $H^0(X,A)\otimes H^0(X, \omega_X\otimes A^{\vee})$
with its image under the map $\mu_A$.  We then obtain that $W^1_a(E)$ is smooth of the expected
dimension at the point $[A]$ if and only if there exist no element
$0\neq \upsilon \in H^0(X,A)\otimes H^0(X,\omega_X\otimes A^{\vee})$ such that
\begin{equation}
\label{eq:smooth_twBN}
\nu\bigl(\upsilon \cdot H^0(X, L-2A)\bigr)\subseteq \ker(e),
\end{equation}
where recall that $e\in \Ext^1(L\otimes A^{\vee}, A)$ is the extension class of the vector bundle $E$.
\end{proof}

\begin{remark}
\label{rem:w1a}
The argument given in Proposition \ref{prop:twistedBN} also shows that when
$a<\frac{d-2g+2}{2}$, the equality $W^1_a(E)=W^1_a(X)$ holds.
\end{remark}

The following result shows that in the extremal case $d=3g+1$, the strong isotropicity of the
resonance $\RR(X,E)$ is equivalent to the smoothness of all determinantal loci $W^1_a(E)$.

\begin{theorem}
\label{thm:equivalences}
Let $X$ be a general curve of genus $g$ and let $E\in \SU_X(2,L)$ be a
very stable vector bundle of degree $3g+1$.

\begin{enumerate}[itemsep=2pt,topsep=-1pt]
\item \label{ist1}
If $\RR(X,E)$ is strongly isotropic, then $W^1_a(E)$ is smooth and zero-dimensional
for each $\frac{g+2}{2}\leq a\leq g+1$.

\item \label{ist2}
The variety $W^1_{g+1}(E)$ is smooth, zero-dimensional, and consists of $2^g$
reduced points.
\end{enumerate}
\end{theorem}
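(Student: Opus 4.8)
The plan is to deduce both statements from Proposition~\ref{prop:twistedBN}, using for part~\eqref{ist1} the characterization of strong isotropicity in Lemma~\ref{lemma:strongis} and for part~\eqref{ist2} a direct deformation argument. For \eqref{ist1}, since $d=3g+1$ the expected dimension $d-3g-1$ of $W^1_a(E)$ equals zero, so by Proposition~\ref{prop:twistedBN}\eqref{bi} it suffices to prove $W^1_a(E)$ is smooth (hence zero-dimensional and reduced) at every point $[A]$. First I would reduce to the case $h^0(X,A)=2$ with $A$ base point free: the bound $h^0(X,A)>2$ is excluded by Lemma~\ref{lemma:nohigher} together with a parameter count on $\SU_X(2,L)$, while replacing $A$ by $A(-F)$, with $F$ the base locus of $|A|$, leaves the line $\mathbf{P}H^0(X,A)\subseteq\Rproj(X,E)$ unchanged since $H^0(X,A(-F))=H^0(X,A)$. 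For such $A$, Proposition~\ref{prop:twistedBN}\eqref{biii} reduces smoothness of $W^1_a(E)$ at $[A]$ to the nonexistence of a nonzero $\upsilon\in H^0(X,A)\otimes H^0(X,\omega_X\otimes A^\vee)$ with $\upsilon\cdot H^0(X,L-2A)\subseteq\ker(e)$ (condition \eqref{eq:smoothness2}).

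The key point is that this smoothness condition is the transpose of the separability condition from Lemma~\ref{lemma:strongis}. Consider the bilinear pairing
\[
B\colon \bigl(H^0(X,A)\otimes H^0(X,\omega_X\otimes A^\vee)\bigr)\times H^0(X,L-2A)\longrightarrow\k,\qquad B(\upsilon,u)=e(\upsilon\cdot u),
\]
where $\upsilon\cdot u$ is formed in $H^0(X,\omega_X+L-2A)$ via $\mu_A$ and multiplication of sections. Proposition~\ref{prop:twistedBN}\eqref{biii} asserts that $W^1_a(E)$ is smooth at $[A]$ exactly when the left kernel of $B$ vanishes, whereas Lemma~\ref{lemma:strongis} (with $F=0$) asserts that $H^0(X,A)\subseteq H^0(X,E)$ is strongly isotropic exactly when the right kernel of $B$ vanishes. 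Now the two source spaces of $B$ have the same dimension: the Petri map $\mu_A$ is injective because $X$ is general, so $\dim\bigl(H^0(X,A)\otimes H^0(X,\omega_X\otimes A^\vee)\bigr)=2\,h^0(X,\omega_X\otimes A^\vee)=2(g+1-a)$, while very stability of $E$ is equivalent to $H^0(X,2A-L+\omega_X)=0$ for every line subbundle $A\hookrightarrow E$, i.e.\ $h^1(X,L-2A)=0$, whence $h^0(X,L-2A)=\chi(L-2A)=2(g+1-a)$ by Riemann--Roch. A bilinear form between spaces of equal dimension has trivial left kernel if and only if it has trivial right kernel, so $W^1_a(E)$ is smooth at $[A]$ if and only if $H^0(X,A)$ is strongly isotropic. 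Since distinct saturated line subbundles of $E$ have spaces of sections intersecting only in $0$, the subspaces $\mathbf{P}H^0(X,A)$ are precisely the (connected, hence irreducible) components of $\Rproj(X,E)$; therefore the hypothesis that $\RR(X,E)$ is strongly isotropic yields the required smoothness at each $[A]$, proving \eqref{ist1}.

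For part~\eqref{ist2}, with $a=g+1$ one argues directly, since strong isotropicity is not assumed. Every $A\in\Pic^{g+1}(X)$ has $h^0(X,A)\ge 2$, and since any line subbundle of the very stable bundle $E$ has degree at most $g+1$ by Lemma~\ref{lemma:nohigher}, each subpencil $A\in W^1_{g+1}(E)$ is already saturated, with $E/A$ a line bundle of degree $2g$; moreover $h^0(X,L-2A)=\chi(L-2A)=0$ by the very stability argument above, so $h^0(X,E\otimes A^\vee)=1$ and $W^1_{g+1}(E)$ is identified with the Quot scheme of degree-$2g$ locally free rank-one quotients of $E$. At $[A]$ the Zariski tangent space is $\Hom(A,E/A)=H^0(X,L-2A)=0$, so $W^1_{g+1}(E)$ is smooth of dimension $0$, that is, a finite set of reduced points. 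The expected dimension being attained, Proposition~\ref{prop:twistedBN}\eqref{bii} applies with $a=g+1$, $d=3g+1$ and gives $[W^1_{g+1}(E)]=2^{g}\,\theta^{g}/g!$ in $H^*(\Pic^{g+1}(X),\Q)$; since $\int_{\Pic^{g+1}(X)}\theta^{g}=g!$, the degree is $2^{g}$, and reducedness then gives exactly $2^{g}$ points.

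The step I expect to be the main obstacle is the dimension equality $\dim\bigl(H^0(X,A)\otimes H^0(X,\omega_X\otimes A^\vee)\bigr)=h^0(X,L-2A)$ underlying the left/right kernel equivalence for $B$ in part~\eqref{ist1}: the only clean route I see to $h^1(X,L-2A)=0$ passes through the characterization of very stability as the absence of nonzero nilpotent Higgs fields, equivalently $H^0(X,2A-L+\omega_X)=0$ for all line subbundles $A\hookrightarrow E$. The remaining work is bookkeeping---reducing to base-point-free pencils with $h^0=2$, and identifying the $\mathbf{P}H^0(X,A)$ with the irreducible components of the resonance scheme.
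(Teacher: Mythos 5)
Your argument for part \eqref{ist1} is essentially the paper's own proof in different clothing: the bilinear pairing $B(\upsilon,u)=e(\upsilon\cdot u)$ on $W\times U$, with $W=H^0(X,A)\otimes H^0(X,\omega_X\otimes A^{\vee})$ and $U=H^0(X,L-2A)$, is precisely the coefficient matrix $(c_{ij})$ of the hyperplane $\ker(e)$ pulled back to $\P(W\otimes U)$ that the paper works with, and your left/right-kernel equivalence for a square matrix is the paper's observation that singularity of $(c_{ij})$ produces a point $[u]\in\P(U)$ with $\nu(\P(W)\times\{[u]\})\subseteq \P\ker(e)$, contradicting Lemma \ref{lemma:strongis}; the dimension count $\dim W=\dim U=2(g-a+1)$ is obtained the same way (Petri injectivity plus $h^1(X,L-2A)=0$ from very stability). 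The one slip is your reduction to base-point-free pencils: replacing $A$ by $A(-F)$ changes the degree from $a$ to $a-\deg F$, so it proves nothing about smoothness of $W^1_a(E)$ at the point $[A]$ itself. The reduction is in fact unnecessary: since $H^0(X,L-2A)\subseteq H^0(X,L(-2A+F))$, a nonzero right-kernel element of $B$ already witnesses the failure of strong isotropicity via Lemma \ref{lemma:strongis} exactly as stated, with nonzero base locus $F$; with that repair your argument is the paper's. For part \eqref{ist2} you genuinely diverge: the paper notes that for $a=g+1$ the Petri summand in the tangent-space description \eqref{eq:tgspace} vanishes and the injective twisted Petri map $\beta$, having $g$-dimensional source and target, is automatically surjective, so $T_{[A]}W^1_{g+1}(E)=0$; you instead compute the Quot-scheme tangent space $\Hom(A,E/A)=H^0(X,L-2A)=0$. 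The two are compatible---the tangent space of the determinantal scheme $W^1_{g+1}(E)$ is the image of $\Hom(A,E/A)$ under the coboundary $H^0(X,L-2A)\to H^1(X,\mathcal{O}_X)$, so both vanish together---but you should say a word about why the determinantal scheme structure (to which the Porteous count applies) and the Quot-scheme structure give the same reducedness conclusion; as it stands this identification is asserted rather than argued. Finally, both your proof and the paper's rely on every subpencil of such an $E$ having $h^0(X,A)=2$, which is only justified by the parameter count of Lemma \ref{lemma:nohigher} for a \emph{general} $E$; since the paper leans on the same point, I do not count it against you.
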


\begin{proof}
Assume $\RR(X,E)$ is strongly isotropic. Since $E$ is very stable $H^1(X, L-2A)=0$,
for every subpencil $A\in W^1_a(E)$, therefore by Riemann--Roch we obtain
$h^0(X, L-2A)=2(g-a+1)$. Moreover, the argument in Lemma \ref{lemma:nohigher}
shows that $h^0(X,A)=2$, for every such $A$.  Set
$W\coloneqq H^0(X,A)\otimes H^0(X, \omega_X\otimes A^{\vee})$ and
$U\coloneqq H^0(X,L-2A)$, therefore $\dim(U)=\dim(W)=2(g-a+1)$.
We consider the following diagram,
\begin{equation}
\begin{tikzcd}[column sep=20pt,row sep=21pt]
\P(W)\times \P(U)\cong \P^{2g-2a+1}\times \P^{2g-2a+1}  \ar[d, "\pi_1"]
 \ar[dr, "\pi_2"]  \ar[r, "\nu"] & \P\bigl(H^0(X, \omega_X+L-2A)\bigr)  .\\
\P(W) & \P(U)
\end{tikzcd}
\end{equation}
Here $\nu$ is the projection of the Segre embedding induced by the multiplication
map, and thus
\[
\nu^*\bigl(\mathcal{O}_{\P(H^0(\omega_X+L-2A))}(1)\bigr)=\pi_1^*\bigl(\mathcal{O}_{\P(W)}(1)\bigr)\otimes \pi_2^*\bigl(\mathcal{O}_{\P(U)}(1)\bigr).
\]
Assume by contradiction that $W^1_a(E)$ is not smooth and of dimension zero at a point $[A]$.
Then by applying Proposition \ref{prop:twistedBN}, there exists an element $[v]\in \P(W)$ such that
$v\cdot H^0(X,L-2A)\subseteq \ker(e)$, where recall that $\ker(e)$ is regarded as a hyperplane inside
$H^0(X,\omega_X+L-2A)$. In other words, if $H$ is the pull-back of $\ker(e)$ under the natural map
$\P(W\otimes U)\dashrightarrow \P\bigl(H^0(X, \omega_X+L-2A)\bigr)$, writing the equation of $H$
as
\[
\sum_{1\le i,j \le 2(g-a+1)} c_{ij} z_{ij}=0,
\]
it follows that the coefficient matrix $(c_{ij})$ is singular.
Therefore, there exists an element $[u]\in \P(U)$ such that
$\nu\bigl(\P(W)\times \{[u]\}\bigr)\subseteq \P \ker(e)$.
Using Lemma \ref{lemma:strongis}, this amounts to $\RR(X,E)$ not being strongly isotropic.
This contradicts our hypothesis, and thus proves part \eqref{ist1}.

Part \eqref{ist2}  follows immediately from part \eqref{ist1}, once we consider the tangent space
description \eqref{eq:tgspace} when $d=3g+1$. Since $E$ is very stable, the map $\beta$ is
surjective, hence condition \eqref{eq:int} is automatically satisfied. This completes the proof.
\end{proof}

\begin{remark}
\label{rem:no-line}
Assuming $E\in \SU_X(2,L)$ is a vector bundle as above
(of degree $d=3g+1$) which contains no line subbundles
$A\hookrightarrow E$ with $h^0(X,A)> 2$ (an assumption satisfied outside a subset
of codimension $3$ in $\SU_X(2, L)$, see Lemma \ref{lemma:nohigher}), the proof
of Theorem \ref{thm:equivalences} can be reversed and we obtain that $\RR(X,E)$ is strongly
isotropic if and only if $W^1_a(E)$ is smooth and zero-dimensional for each $\frac{g+2}{2}\leq a\leq g+1$.
\end{remark}

\subsection{The resonance variety of a general vector bundle}
\label{subsec:res-gen-bundle}
We are now in a position to complete the proof of Theorem \ref{thm:sepvb2}
from the Introduction.

\noindent \emph{Proof of Theorem \ref{thm:sepvb2}.}
We fix a general vector bundle $E\in \SU_X(2,L)$ of degree  $d\leq 3g+1$.
Assume by contradiction that $\RR(X,E)$ is not strongly separable, which in this case amounts
to its non-separability. After invoking Lemma \ref{lemma:nohigher}, we can express $E$
as an extension
\begin{equation*}
\begin{tikzcd}[column sep=20pt]
0\ar[r]& A \ar[r]&  E \ar[r]& L\otimes A^{\vee} \ar[r]&  0
\end{tikzcd}
\end{equation*}
as in Lemma \ref{lemma:strongis}, where $A\in W^1_a(X)$ is such that $h^0(X,A)=2$
and the base locus $F$ of $\abs{A}$
is an effective divisor of degree $b$ on $X$ and the subspace
$H^0(X,A)\subseteq H^0(X,E)\equalscolon V^{\vee}$ is a non-separable
component of $\RR(X,E)$. Therefore, the conclusion of Lemma \ref{lemma:strongis}
holds. Via a parameter count, we shall argue that this is not possible for a general
choice of $E$.

For given values $\frac{g+2}{2}\leq a\leq g+1$ and $0\leq b\leq a$, we let $X_b$
be the $b$-th symmetric product of $X$, and we denote by $\mathcal{T}_{a,b}$
the subvariety of $W^1_a(X)\times X_b$ consisting of pairs $(A,F)$, where $F$
is an effective divisor of degree $b$ on $X$ and $A\in W^1_a(X)$ is a pencil
having $F$ in its base locus. Clearly, if $\mathcal{T}_{a,b}$ is non-empty, it is
irreducible of dimension $\dim W^1_{a-b}(X)+b=2a-g-2-b$, where we use the
generality of $X$, which implies that $\dim W^1_{a-b}(X)=\max \{2a-2b-g-2,0\}$.

We now introduce the parameter space
\[
\Sigma_{a,b}\coloneqq
\left\{ \bigl( A, F, [u], [e] \bigr):
\renewcommand*{\arraystretch}{1.2}
\begin{array}{l}
A\in W^1_{a,F}(X), \ F \in X_b \\
\left[u\right] \in \P H^0(X, L-2A+F), \
\left[e\right] \in \P H^0(X, \omega_X+L-2A)^{\vee}\\
H^0(X, A)\otimes H^0(X, \omega_X\otimes A^{\vee})\cdot \langle u\rangle \subseteq \ker{(e)}
\end{array}
\renewcommand*{\arraystretch}{1}
\hspace*{-4pt}\right\},
\]
together with the projections
\begin{equation}
\label{eq:incidence}
\begin{tikzcd}[column sep=28pt]
 \mathcal{T}_{a,b}
& \Sigma_{a,b} \ar[l, "\:\pr_1"'] \ar[r, "\pr_2"]
&\SU_X(2,L),
\end{tikzcd}
\end{equation}
where $\mathrm{pr}_2$ associates to $\bigl(A, F, [u], [e]\bigr)$ the vector bundle $E$
corresponding to the extension class $e\in \Ext^1(L\otimes A^{\vee}, A)$, whereas
$\mathrm{pr}_1\bigl(A,F,[u],[e]\bigr)\coloneqq (A,F)$. We now estimate the general fibre dimension
of $\mathrm{pr}_1$. By the generality hypothesis, $E$ may be assumed to be very stable,
therefore $h^1(X, \omega_X+2A-L)=0$, hence $h^0(X, L-2A+F)=d-2a+b+1-g$.
Having fixed $A, F$, and $[u]\in \P H^0(X,L-2A+F)$, the parameter space for
extension classes $[e]\in \P \mathrm{Ext}^1(L\otimes A^{\vee}, A)$ such that
$\bigl(A,F, [u], [e]\bigr)\in \Sigma_{a,b}$ is the projective space of dimension
\[
h^0(X, \omega_X+L-2A)-h^0(X, A)\cdot h^0(X, \omega_X\otimes A^{\vee})-1=d-g-4.
\]
We obtain the estimate
\begin{align*}
\dim \Sigma_{a,b}\leq \dim \mathcal{T}_{a,b}+(d-2a+b-g)+(d-g-4)\\
=2d-3g-6\leq 3g-4< \dim \SU_X(2,L),
\end{align*}
which shows that the resonance of a general vector bundle $E\in \SU_X(2,L)$
is strongly isotropic.

In the case $d\leq 3g$, this parameter count also establishes that $\RR(X,E)=0$,
for a general $E\in \SU_X(2,L)$. Indeed, assuming the general vector
bundle $E$ appears in an extension (\ref{eq:extrk2}), we obtain that vector bundles
appearing in this way depend on at most
\[
\dim W^1_a(X)+h^0(X,\omega_X+L-2A)-1=d-4\leq 3g-4<3g-3=\dim \SU_X(2,L)
\]
parameters, thus finishing the proof.
\hfill $\Box$

\begin{remark}
\label{rmk:nonruled}
In degree
$3g+2\leq d\leq  4g$, the projective resonance $\mathbf{R}_a(X,E)$ is not linear. Indeed,
it follows from Propositions \ref{prop:max} and \ref{prop:twistedBN} that $\mathbf{R}_a(X,E)$
admits a regular fibration over $W^1_a(E)$. On the other hand, since $\theta$ is an ample
class on the Jacobian variety $\Pic^a(X)$, it follows from part \eqref{bii} of
Proposition \ref{prop:twistedBN} that each component of $W^1_a(E)$ is a positive
dimensional variety of general type; thus, $\mathbf{R}_a(X,E)$ cannot be linear.		
\end{remark}

\section{K\"{a}hler groups and Kodaira fibrations}
\label{subsect:Kahler}

\subsection{Resonance varieties of K\"{a}hler manifolds}
\label{subsec:res-kahler}
We now discuss the case of K\"{a}hler groups, when the resonance
varieties in question are {\em not}\/ isotropic (unless they vanish).
For a compact K\"{a}hler manifold $X$, let
\[
\begin{tikzcd}[column sep=19pt]
\cup_X\colon \bwedge^2 H^1(X, \C)\ar[r]& H^2(X,\C)
\end{tikzcd}
\]
be the cup product map. We consider the resonance variety
$\RR(X)\coloneqq \RR\bigl(\pi_1(X)\bigr)$
of the fundamental group of $X$. As is well-known, $X$ is formal, and thus $\pi_1(X)$
is $1$-formal; thus, $\RR(X)$ is linear and projectively disjoint. On the other hand,
if $\RR(X)\ne \{0\}$, then, as shown in \cite[Corollary 7.3]{DPS-duke}, all irreducible
components of $\RR(X)$ are $1$-isotropic, that is, the restriction of $\cup_X$ to
each such component has $1$-dimensional image.

The first notable case is that of surface groups.
Let $\Sigma_g$ be a smooth algebraic curve of genus $g\ge 2$, and let
$\Pi_g\coloneqq \pi_1(\Sigma_g)$ be its fundamental group.
It is well-known that $\Sigma_g$ is formal, and thus $\Pi_g$ is
$1$-formal.  The cohomology ring has the form $H^{\hdot}(\Sigma_g)=E/I$,
where $E=\bw\bigl(e_1,\dots,e_{g},\bar{e}_1,\dots,\bar{e}_{g}\bigr)$ and $I$ is the ideal
generated by the quadrics $e_i\wedge e_j,\bar{e}_i\wedge \bar{e}_j$  for $1\le i<j\le g$,
$e_i\wedge \bar{e}_j$ for $i\ne j$, respectively
$e_1\wedge \bar{e}_1-e_i\wedge \bar{e}_i$  for $1< i \le g$.

It is readily seen that $\mc{R}(\Pi_g)=H^1(\Pi_g,\C)=\C^{2g}$.
Clearly, this linear space is $1$-isotropic and separable.
Moreover, it follows from \cite[Theorem 7.3]{PS-imrn} that
the infinitesimal Alexander invariant $\mf{B}(\Pi_g)$ has
Hilbert series
\[
\Hilb(\mf{B}(\Pi_g),t)= 1-\frac{1-2g t+t^2}{(1-t)^{2g}}.
\]
Therefore, the Chen ranks   are given by
$\theta_1(\Pi_g)=2g$, $\theta_2(\Pi_g)=2g^2-g-1$, and
\begin{equation}
\label{eq:surface-chenranks}
\theta_q(\Pi_g)=(q-1)\binom{2g+q-2}{q}-\binom{2g+q-3}{q-2} \quad\text{for $q \ge 3$}.
\end{equation}

\medskip

We discuss next the case of irregular fibrations.
A fibration from a compact complex manifold $X$ onto a smooth complex
curve $B$ is a surjective holomorphic map with connected fibers.
For smooth projective varieties, and, more generally, for compact K\"ahler
manifolds, all components of the resonance variety $\mc{R}(X)$ are of the form
$V_f\coloneqq f^*H^1(B,\C)$, where $f\colon X\to B$ runs through the (finite)
set $\mathcal{E}(X)$ of equivalence classes of fibrations with base $B$ being
a smooth curve of genus $g\ge 2$ and with no multiple fibers
(see e.g.~\cites{DPS-duke, Delzant, Su-imrn}).  We sometimes
write these maps as $f\colon X\rightarrow B_f$ to emphasize the
dependence of the base of the fibration on $f$. The resulting
components $V_f$ are linear (of dimension $2g(B)\ge 4$),
projectively disjoint, and $1$-isotropic. As shown by
Catanese \cite{Cat}, the existence of such a fibration $f\colon X\to B$
is equivalent to the existence of an epimorphism
$\pi_1(X)\surj \pi_1(B)$ whose kernel is finitely generated.

\begin{example}
\label{ex:CCML}
To illustrate the phenomenon that the resonance \emph{does not} detect fibrations with multiple fibres, we mention the  Catanese--Ciliberto--Mendes Lopes surface $X$ \cite[Example 2]{HP},  which admits an elliptic
fibration, $f\colon X\to B$ with base a smooth curve of genus $2$.
It follows from \cites{HP, Su-imrn} that the resonance variety $\mc{R}(X)$
consists of a single, $4$-dimensional linear subspace in $H^1(X,\C)=\C^6$,
which is equal to $f^*(H^1(B,\C))$. Indeed, direct computation shows that $H^*(X,\C)=E/I$,
where $E=\bigwedge(e_1,\bar{e}_1,e_2,\bar{e}_2,e_3,\bar{e}_3)$ and
$I=\langle e_1\wedge  e_2, \bar{e}_1\wedge \bar{e}_2, e_1\wedge \bar{e}_2,
e_2\wedge \bar{e}_1, e_1\wedge \bar{e}_1-e_2\wedge \bar{e}_2 \rangle$.
Setting $V^{\vee}=H^1(X,\C)$ and $K^{\perp}=I_2\subseteq \bw^2 V^{\vee}$,
we have that $\mc{R}(X)=\mc{R}(V,K)=\overline{V}^{\vee}=
\langle e_1,\bar{e}_1,e_2,\bar{e}_2 \rangle$.
Clearly, $K^{\perp}\subset \bw^2\overline{V}^{\vee}$ and
$\ol{K}=\langle v_1\wedge \bar{v}_1\rangle$;
thus, $\overline{V}^{\vee}$ is $1$-isotropic. Moreover,
$\overline{V}^{\vee}$ is separable, and thus, by
Theorem \ref{thm:sep->disjoint} it defines an isolated component
of $\Rproj(V,K)$. To sum up, the resonance of $X$ is linear, projectively disjoint,
and reduced. The surface $X$ also admits a
fibration with base an elliptic curve and with $2$ singular fibers, each of
multiplicity $2$. This (singular) fibration defines by pullback a $2$-dimensional
translated torus component of the characteristic variety $\mc{V}(X)$, but this
component is not detected by $\mc{R}(X)$, see again \cite{Su-imrn}.
\end{example}

\subsection{Kodaira fibrations}
\label{subsect:kodfib}
Assume now that $f\colon X\rightarrow B$ is a Kodaira fibration over a smooth
projective curve of genus $b\geq 2$ and denote by $\Sigma_g$ a general fibre of $f$.
The study of such fibrations goes back to the fundamental papers of Atiyah and
Kodaira \cite{At, Kod}. A Kodaira fibration induces a \emph{monodromy representation},
$\rho\colon \Pi_b \rightarrow \Sp_{2g}(\Z)=\Aut \bigl(H_1(\Sigma_g, \Z)\bigr)$.
The Leray--Serre spectral sequence of the fibration gives rise to the short exact sequence
\begin{equation}
\label{eq:invmod}
\begin{tikzcd}[column sep=20pt]
0 \ar[r]& H^1(B, \C)\ar[r, "f^*"]& H^1(X,\C) \ar[r]& H^1(\Sigma_g,\C)^{\rho}\ar[r]&  0.
\end{tikzcd}
\end{equation}

In particular, the \emph{relative irregularity}, $q_f\coloneqq q(X)-g(B)$ of the fibration
can be regarded as the invariant part of the monodromy representation; that is,
$ q_f=\frac{1}{2} h^1\bigl(\Sigma_g, \C\bigr)^{\rho}$.  There are essentially two
known ways to construct Kodaira fibrations. One is by taking generic complete
intersections in the moduli space $\overline{\mathcal{M}}_g$ of stable curves
of genus $g$ using that the Satake compactification of $\mathcal{M}_g$ has
boundary of codimension $2$; the other via ramified branched cover
constructions over product of curves, see \cites{At, CR, Kod}. No examples
of compact K\"ahler surfaces $X$ having at least three non-equivalent Kodaira
fibrations are known, see \cite[Question 10]{C2}. On the other hand, Salter \cite{Salter}
has provided examples of closed (non-algebraic) $4$-manifolds which admit a
number of non-equivalent surface bundle structures that is an arbitrary power of $2$.

The resonance of complete intersection Kodaira fibrations turns out to be quite simple.

\begin{lemma}
\label{lemma:complint}
Let $\Sigma_g\hookrightarrow X\stackrel{f}\twoheadrightarrow B$ be a complete
intersection Kodaira fibration. Then $H^1(X,\C)\cong f^*H^1(B,\C)$ and accordingly
the resonance $\RR(X)=f^*H^1(B,\C)$ is separable.
\end{lemma}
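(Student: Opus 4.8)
The plan is to reduce the first assertion to the vanishing of the monodromy invariants of $f$, prove that vanishing by a Zariski--Lefschetz argument, and then deduce both assertions from the structure of resonance for K\"ahler manifolds recalled in Section~\ref{subsec:res-kahler} together with Example~\ref{ex:zero-res}. First I would invoke the Leray--Serre exact sequence \eqref{eq:invmod}, which identifies the quotient $H^1(X,\C)/f^*H^1(B,\C)$ with the invariant subspace $H^1(\Sigma_g,\C)^{\rho}$. Hence the claimed isomorphism $H^1(X,\C)\cong f^*H^1(B,\C)$ is equivalent to $H^1(\Sigma_g,\C)^{\rho}=0$, i.e.\ to the relative irregularity $q_f$ vanishing, and the first half of the lemma becomes a purely monodromy-theoretic statement about $\rho\colon\Pi_b\to\Sp_{2g}(\Z)$.

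To prove $H^1(\Sigma_g,\C)^{\rho}=0$ I would use the feature that singles out the complete intersection construction: the base $B$ is a \emph{generic} complete intersection curve inside a projective model of $\mathcal{M}_g$ (replacing $\mathcal{M}_g$ by a fine level cover to avoid stacky issues), chosen so as to miss the boundary, which has codimension at least $2$ in the Satake compactification. By the Zariski--Lefschetz theorem for fundamental groups, iterated hyperplane slicing down to dimension one shows that $\pi_1(B)$ surjects onto the fundamental group of the moduli space; hence the image $\rho(\Pi_b)$ agrees with the monodromy group of the universal family of genus-$g$ curves. That group surjects onto (a finite-index subgroup of) $\Sp_{2g}(\Z)$, so it is Zariski dense in $\Sp_{2g}$; since $H^1(\Sigma_g,\C)\cong\C^{2g}$ is an irreducible, nontrivial $\Sp_{2g}$-module, its $\rho$-invariants vanish, and therefore $H^1(X,\C)=f^*H^1(B,\C)$. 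Making this Lefschetz argument precise across the moduli stack, and correctly identifying the monodromy group, is the main obstacle; the rest is formal.

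Finally, with $H^1(X,\C)=f^*H^1(B,\C)$ in hand, I would check directly that $\RR(X)$ fills the whole ambient space $V^{\vee}=H^1(X,\C)$. Given $0\ne\alpha\in H^1(B,\C)$, the cup product $H^1(B,\C)\times H^1(B,\C)\to H^2(B,\C)\cong\C$ is a nondegenerate alternating form, so (as $\dim H^1(B,\C)=2b\ge 4$) there is a $\beta\in H^1(B,\C)$ linearly independent from $\alpha$ with $\alpha\cup\beta=0$; then $f^*\alpha\wedge f^*\beta$ is a nonzero element of $K^{\perp}=\ker(\cup_X)$, so $f^*\alpha\in\RR(X)$. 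Since $\RR(X)\subseteq H^1(X,\C)=f^*H^1(B,\C)$ always, this forces $\RR(X)=f^*H^1(B,\C)=V^{\vee}$, which is exactly the component $V_f$ predicted by the K\"ahler structure theory. Separability is then immediate from Example~\ref{ex:zero-res}: when $\RR(V,K)=V^{\vee}$ one has $K^{\perp}\cap\langle V^{\vee}\rangle_E=K^{\perp}\subseteq\bwedge^2 V^{\vee}$ trivially, so the unique component $V^{\vee}$ is separable. (Alternatively one can note that $H^1(X,\C)=f^*H^1(B,\C)$ makes $\cup_X$ of rank one, with image $f^*H^2(B,\C)$, nonzero by the Leray spectral sequence once $H^1(\Sigma_g,\C)^{\rho}=0$; then $\dim K=1\le\dim V-2$ and Example~\ref{ex:isotropic-res} again yields $\RR(X)=V^{\vee}$.)
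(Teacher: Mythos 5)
Your proposal is correct and follows essentially the same route as the paper: both pass to the fine level cover, use the Lefschetz hyperplane theorem on the Satake compactification to get $\pi_1(B)\twoheadrightarrow\pi_1(\mathcal{M}_g^{(n)})$, deduce $H^1(\Sigma_g,\C)^{\rho}=0$ (you via Zariski density of the symplectic monodromy, the paper via finite index in the mapping class group), and conclude with \eqref{eq:invmod}. The only cosmetic difference is at the end, where you invoke Example \ref{ex:zero-res} after checking $\RR(X)=V^{\vee}$ directly, while the paper identifies $\ker(\cup_X)\cong\ker(\cup_B)$ and cites Lemma \ref{lem:alt-sep}; these are equivalent since $M=0$ when the component is all of $V^{\vee}$.
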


\begin{proof}
We let $\mathcal{M}_g^{(n)}$ be the moduli space of genus $g$ curves
with a level $n\geq 3$ structure, that is, the parameter space for smooth
curves of genus $g$, together with the choice of a symplectic isomorphism
$\Pic^0(X)[n]\cong \bigl(\Z/n\Z)^{2g}$. It is known that $\mathcal{M}_g^{(n)}$
is a fine moduli space of curves; we denote by $\overline{\mathcal{M}}_g^{(n)}$ the
normalization of the Deligne--Mumford compactification $\overline{\mathcal{M}}_g$
in the function field of $\mathcal{M}_g^{(n)}$ (see \cite{vGO} for details on these matters).

Using the theory of Satake compactifications, there exists a regular map,
$\iota\colon \overline{\mathcal{M}}_g^{(n)}\rightarrow \mathbf{P}^N$
such that the boundary $\overline{\mathcal{M}}_g^{(n)}\setminus \mathcal{M}_g^{(n)}$
is contracted to a codimension $2$ set of its image. It follows that the inverse image
under $\iota$ of the intersection of $3g-4$ general hyperplanes in $\mathbf{P}^N$ is
a smooth projective curve $B\subseteq \mathcal{M}_g^{(n)}$. Since $\mathcal{M}_g^{(n)}$
is a fine moduli space, the pull-back to $B$ of its universal family induces a Kodaira
fibration $f\colon X\rightarrow B$ of smooth curves of genus $g$. By the Lefschetz
hyperplane section theorem, since $B$ is obtained by intersecting
$\overline{\mathcal{M}}_g^{(n)}$ with ample divisors, the map
$\iota_*\colon \pi_1(B)\rightarrow \pi_1\bigl(\mathcal{M}_g^{(n)}\bigr)$
is surjective. Consequently, the image under the monodromy map of
$\pi_1(B)$ inside the mapping class group $\mathrm{Mod}_g$ is
of finite index. It follows that $H^1(\Sigma_g,\C)^{\rho}=0$, and so
by the exactness of \eqref{eq:invmod}, we have that $H^1(X,\C)\cong f^*H^1(B, \C)$.
Therefore, $\ker(\cup_X)\cong \ker(\cup_B)$ and accordingly $\RR(X)\cong f^*\RR(B)$.
Applying Lemma \ref{lem:alt-sep}, we conclude that $\RR(X)$ is separable.
\end{proof}

It follows from the lemma that formula \eqref{eq:surface-chenranks} applies for
the Chen ranks of $\pi_1(X)$.

Assume now $X$ admits two non-equivalent Kodaira fibration structures,
$\Sigma_{g_1}\hookrightarrow X\stackrel{f_1}\rightarrow B_1$ and
$\Sigma_{g_2}\hookrightarrow X\stackrel{f_2}\rightarrow B_2$.
We consider the product map, $f=(f_1, f_2)\colon X\longrightarrow B_1\times B_2$,
and the homomorphisms induced in cohomology,
\begin{equation}
\label{eq:pullbackhom}
\begin{tikzcd}[column sep=19pt]
H^i(f)\colon H^i(B_1\times B_2, \C)\ar[r]& H^i(X, \C)
\end{tikzcd}
\end{equation}
for $i=1,2$.

\noindent \emph{Proof of Theorem \ref{thm:koddouble}}.
With the notation of \eqref{eq:pullbackhom}, we are in the
situation of a double Kodaira fibration for which $H^1(f)$ is
an isomorphism, whereas $H^2(f)$ is injective. We are going
to verify the separability of $\RR(X)$ using Lemma \ref{lem:alt-sep}.
To that end, we write down the following commutative diagram:
\begin{equation}
\label{eq:com_product}
\begin{tikzcd}[column sep=39pt]
 \bigwedge^2 H^1\bigl(B_1\times B_2, \C\bigr) \ar[r, "\bwedge^2 H^1(f)"]
 \ar[d, "\cup_{B_1\times B_2}"]
&\bigwedge^2 H^1(X, \C) \ar[d, "\cup_X"]\\
H^2\bigl(B_1\times B_2, \C\bigr) \ar[r, hook, "H^2(f)"] & H^2(X,\C)
\end{tikzcd}
\end{equation}
From the K\"unneth formula, it follows that
$\ker(\cup_X)\cong \ker(\cup_{B_1})\oplus \ker(\cup_{B_2}\bigr)$,
and accordingly,  $\RR(X)=f_1^*H^1(B_1,\C)\cup f_2^*H^1(B_2,\C)$.
Setting $U_i \coloneqq \ker\bigl\{(f_i)_*\colon
H_1(X,\C)\rightarrow H_1(B_i,\C)\bigr\}$, we obtain that
\begin{equation}
\label{eq:K2}
K=\bigl(U_1\otimes U_2\bigr)\oplus \bigl(\bwedge^2 f_*\bigr)^{-1}\bigl(H_2(B_1, \C)\bigr)\oplus
\bigl(\bwedge^2 f_*\bigr)^{-1}\bigl(H_2(B_2,\C)\bigr)\subseteq \bwedge ^2 H_1(X, \C).
\end{equation}
Using the notation (\ref{eq:decomp-lmod}) for the component
$\overline{V}_1^{\vee}=f_1^*\bigl(H^1(B_1,\C)\bigr)$ of $\RR(X)$, we have that
\[
H=\bwedge^2 U_1,  \  M=U_1\otimes U_2 ,\ \mbox{and } \ L=\bwedge^2 U_2.
\]
Since $U_1\otimes U_2\subseteq K$, it follows with the notation
of \eqref{eq:projection-m} that the map
$p_M\colon K\cap\bigl(H\oplus M\bigr)\rightarrow M$ is surjective.
Using Lemma \ref{lem:alt-sep}, we conclude that the component
$f_1^*H^1(B_1,\mathbb C)$ of $\RR(X)$ is separable. Same
considerations apply for the component $f_2^*H^1(B_2,\C)$ of the resonance.
\hfill $\Box$

\subsection{The Atiyah--Kodaira construction}
\label{subsec:a-k}
The simplest examples of double Kodaira fibrations where the
conditions of Theorem \ref{thm:koddouble} are satisfied are provided
by the Atiyah--Kodaira fibrations constructed in the 1960s in \cites{At, Kod}.
These surfaces can  be described as follows. Let $\tau\colon B_{2}\to B_{2}$
be a fixed point free involution of a curve of genus $g(B_2)=2g-1$, and let
$\varphi\colon B_1\to B_2$ be the congruence unramified cover classified by the
homomorphism $\pi_1(B_2)\surj H_1(B_2,\Z)\surj H_1\bigl(B_2,\Z/2\Z\bigr)$,
thus $\deg(\varphi)=2^{2(2g-1)}$ and $g(B_1)=1+2^{4g-1}(g-1)$.
We let $X$ be the $2$-fold branched cover of $B_1\times B_2$
ramified along the divisor $Y_1+Y_2$, where
\[
Y_1\coloneqq \bigl\{(z,\varphi(z)):z\in B_1\bigr\} \ \mbox{ and } \ \
Y_2\coloneqq \bigl\{\bigl(z, \tau(\varphi(z))\bigr):z\in B_1\bigr\}.
\]
Observe that $Y_1\cap Y_2=\emptyset$. The two independent
Kodaira fibrations of $X$ are of the form
\begin{equation}
\label{eq:fibrations}
\begin{tikzcd}[column sep=17pt]
\Sigma_{4g-2}\ar[r]& X\ar[r, "f_1"] &B_1
\end{tikzcd}
 \quad\text{and}\quad
\begin{tikzcd}[column sep=17pt]
\Sigma_{1+2^{4g-2}(4g-3)}\ar[r]& X\ar[r, "f_2"] &B_2 .
\end{tikzcd}
\end{equation}
Using \cite[Theorem 2.2]{Kas}, we obtain that
$f=(f_1,f_2)\colon X \longrightarrow B_1\times B_2$ induces
an isomorphism on $H^1(-,\C)$ and an injection on $H^2(-, \C)$, see
also \cite[Theorem 1.1]{ChenLei}, where only the case $g=2$ of this
construction is treated.
Indeed, one applies the sequence
(\ref{eq:invmod}) to the fibration $f_1\colon X\rightarrow B_1$
and use that its general fibre $\Sigma_{4g-2}$ is a double cover
of $B_2=\Sigma_{2g-1}$, branched at two points conjugate under
the involution $\tau$. Then
$H^1\bigl(\Sigma_{4g-2}, \C)^{\pi_1(B_1)}\cong H^1(B_2,\C)$,
therefore $b_1(X)=4g+2^{4g}(g-1)$ and
$\mc{R}(X)=\overline{V}_1^{\vee} \cup \overline{V}_2^{\vee}$,
where $\overline{V}_i^{\vee}=f_i^*H^1(B_i,\C)$ for $i=1,2$.
Therefore, Theorem \ref{thm:koddouble} applies in this case.

Similar considerations apply to the example of the double Kodaira fibration
given in \cite{CFT}. This complex surface $X$ has $b_1(X)=38$ and fibers in
two distinct ways, $\Sigma_4 \to X\to \Sigma_{17}$ and $\Sigma_{49} \to X\to \Sigma_{2}$.
A similar argument shows that $\mc{R}(X)$ is separable. We have not tried to
verify the hypothesis of Theorem \ref{thm:koddouble} for other families of
double Kodaira fibrations, for instance, those constructed in \cite{CR}.

These considerations naturally raise the following question.

\begin{question}
\label{quest:kahler-res}
Is the resonance variety of a K\"{a}hler group $G$ always linear, projectively disjoint,
and reduced?
\end{question}

Of course, the real question is whether $\Rproj(G)$ is reduced.
Again, it would be enough to show
that the components $f^*(H^1(B_f,\C))$ of $\mc{R}(G)$ are all separable.
If this question were to have a positive answer, the following \emph{Chen ranks
conjecture for K\"ahler groups} would be the natural next step to consider.

\begin{conjecture}
\label{conj:chen-kahler}
Let $X$ be a compact K\"ahler manifold. For each $g\geq 2$, we denote by
$h_{g}(X)\coloneqq \#\big\{f\in \mathcal{E}(X) : g(B_f)=g\big\}$
the number of components of $\mc{R}(X)$ of dimension $2g$. Then for all
$q\gg 0$, the following Chen rank formula holds:
 \begin{align*}
 \theta_q\bigl(\pi_1(X)\bigr) &= \sum_{f\in \mathcal{E}(X) }  \theta_q(\pi_1(B_f))
= \sum_{g\ge 2} h_{g}(X) \theta_q(\Pi_g) \\
 &=(q-1)\cdot \sum_{g\ge 2} h_g(X)\binom{2g+q-2}{q}
 -\sum_{g\ge 2} h_g(X)\binom{2g+q-3}{q-2}.
 \end{align*}
\end{conjecture}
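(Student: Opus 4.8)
The plan is to reduce the conjecture to the separability of the components of $\RR(X)$ -- exactly the open point raised in Question~\ref{quest:kahler-res} -- and then to run Theorem~\ref{thm:separable}. Since $X$ is K\"ahler, $\pi_1(X)$ is $1$-formal, so \eqref{eq:theta-w} gives $\theta_q(\pi_1(X))=\dim W_{q-2}(X)$ for all $q\ge 2$, and by \cite{DPS-duke} the resonance $\RR(X)$ is linear with components $\overline V_f^{\vee}=f^*H^1(B_f,\C)$, $f\in\mathcal{E}(X)$, which are projectively disjoint and $1$-isotropic. Granting that each $\overline V_f^{\vee}$ is separable, Theorem~\ref{thm:separable} applies and yields
\[
\dim W_q(X)=\sum_{f\in\mathcal{E}(X)}\dim W_q\bigl(\overline V_f,\overline K_f\bigr)\qquad\text{for }q\gg 0,
\]
where $\overline K_f=(\bwedge^2\pi_f)(K)$ and $\pi_f=f_*\colon H_1(X,\C)\surj H_1(B_f,\C)=\overline V_f$.

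Next I would identify each pair $(\overline V_f,\overline K_f)$ with the Koszul data of the surface group $\Pi_{g(B_f)}=\pi_1(B_f)$. Dualizing, $\overline K_f^{\perp}=K^{\perp}\cap\bwedge^2 f^*H^1(B_f,\C)=\ker\bigl(\cup_X|_{\bwedge^2 f^*H^1(B_f,\C)}\bigr)$; since a fibration $f$ with connected fibres and no multiple fibres is injective on $H^2(-,\C)$ (the fibre class is nonzero) and satisfies $\cup_X(f^*\alpha\wedge f^*\beta)=f^*(\alpha\cup_{B_f}\beta)$, the isomorphism $f^*\colon H^1(B_f,\C)\isom\overline V_f^{\vee}$ carries $\ker(\cup_{B_f})$ onto $\overline K_f^{\perp}$. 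Hence $(\overline V_f,\overline K_f)\cong\bigl(H_1(B_f,\C),K_{\Pi_{g(B_f)}}\bigr)$ and $W_q(\overline V_f,\overline K_f)\cong W_q(\Pi_{g(B_f)})$ for all $q$. Feeding in the Hilbert series \eqref{eq:surface-chenranks} of $W(\Pi_g)$, summing over $f$ and regrouping by the genus $g=g(B_f)$ with multiplicity $h_g(X)$, and using $\theta_q(\pi_1(B_f))=\dim W_{q-2}(\Pi_{g(B_f)})$ (surface groups being $1$-formal), produces exactly the asserted Chen rank formula.

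The crux -- and the only genuinely open step -- is the separability of $\overline V_f^{\vee}$. As $\RR(X)\ne\{0\}$ forces the components to be $1$-isotropic but not isotropic, neither Corollary~\ref{cor:isotropic-separable} nor Lemma~\ref{lem:separable-multiplication} applies directly, and one must use the general criterion of Lemma~\ref{lem:alt-sep}: with $M=H_1(B_f,\C)\otimes\ker(f_*)$ as in \eqref{eq:decomp-lmod}, show that the map
\[
p_M\colon K\cap\ker\bigl(\bwedge^2 f_*\bigr)\ \lra\ M
\]
of \eqref{eq:projection-m} is surjective; equivalently, that any class in $\ker(\cup_X)$ lying in the exterior ideal generated by $f^*H^1(B_f,\C)$ already lies in $\bwedge^2 f^*H^1(B_f,\C)$. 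This is a Castelnuovo--de Franchis type assertion: if $\sum_i a_i\wedge b_i\in\ker(\cup_X)$ with $a_i\in f^*H^1(B_f,\C)$ independent, then, up to a class in $\bwedge^2 f^*H^1(B_f,\C)$, the $b_i$ may be chosen in $f^*H^1(B_f,\C)$. I would attack it by combining the Hodge--Riemann bilinear relations -- which make $H^{1,0}(X)\otimes H^{0,1}(X)\to H^{1,1}(X)$ suitably nondegenerate -- with the structure of $\mathcal{E}(X)$ from \cite{Cat} and \cite{DPS-duke}: each $\overline V_f^{\vee}$ is maximal among linear subspaces on which $\cup_X$ has one-dimensional image, and distinct components meet only at the origin, so the lifting required for $p_M$ should be forced by this maximality together with strictness of the Hodge filtration on $H^{\hdot}(X)$.

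The main obstacle is precisely this last step, which is why the statement remains a conjecture. In the cases where the hypotheses are presently known -- complete-intersection Kodaira fibrations (Lemma~\ref{lemma:complint}) and double Kodaira fibrations such as the Atiyah--Kodaira surfaces (Theorem~\ref{thm:koddouble}) -- separability is proved by producing by hand a large submodule of $K$ (for instance $U_1\otimes U_2\subseteq K$ in the double case) that already surjects onto $M$; a uniform proof must replace this ad hoc input by a structural, ``relative Castelnuovo--de Franchis'' statement valid for an arbitrary finite family of pencils on a K\"ahler manifold. Once that is in hand, the remaining steps are routine, and by \cite{SW-forum} no commutator-relator or finiteness hypothesis on $\pi_1(X)$ is needed.
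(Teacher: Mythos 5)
The statement you are addressing is presented in the paper as a \emph{conjecture} (Conjecture \ref{conj:chen-kahler}); the paper offers no proof of it, only the observation that it would follow from a positive answer to Question \ref{quest:kahler-res}. Your proposal correctly reproduces exactly this reduction: granting that every component $f^*H^1(B_f,\C)$ of $\RR(X)$ is separable, the combination of $1$-formality (so that $\theta_q(\pi_1(X))=\dim W_{q-2}(X)$ by \eqref{eq:theta-w}), Theorem \ref{thm:separable}, the identification of $(\overline{V}_f,\overline{K}_f)$ with the Koszul data of the surface group $\Pi_{g(B_f)}$ (valid because $f^*$ is injective on $H^2(-,\C)$ and compatible with cup products), and the surface-group computation \eqref{eq:surface-chenranks} does yield the asserted formula for $q\gg 0$. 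This conditional part of your argument is sound and coincides with how the paper itself frames the conjecture.

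However, the proposal is not a proof, and you acknowledge this yourself: the separability of $f^*H^1(B_f,\C)$ for an arbitrary compact K\"ahler manifold is precisely the open content of Question \ref{quest:kahler-res}, and your suggested attack via the Hodge--Riemann relations and a ``relative Castelnuovo--de Franchis'' statement is a heuristic, not an argument. In particular, since the components are only $1$-isotropic (not isotropic) whenever $\RR(X)\neq\{0\}$, neither Theorem \ref{thm:sep->reduced}, part \eqref{t1-2}, nor Lemma \ref{lem:separable-multiplication} is available, and the surjectivity of $p_M$ in Lemma \ref{lem:alt-sep} must be established by genuinely new input; the paper verifies it only in special cases (Lemma \ref{lemma:complint}, Theorem \ref{thm:koddouble}) by exhibiting explicit subspaces of $K$ that surject onto $M$. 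The status of the statement is therefore unchanged by your proposal: it is a correct conditional reduction of a conjecture that remains open, not a proof.
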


\section{Resonance of right-angled Artin groups}
\label{sect:right-angled}

We now apply the general theory to the case when the subspace
$K \subseteqq  \bwedge^2 V $ admits a \emph{monomial basis},
that is, there exists a basis $\{v_1,\dots , v_n\}$
for $V$ so that $K$ admits a basis whose elements are of the form
$v_i \wedge v_j$.

Let $\binom{[n]}{k}$ be the set of ordered $k$-tuples from $[n]=\{1,\dots,n\}$.
The above information is conveniently encoded in a
(simple) graph $\Gamma=(\mathsf{V},\mathsf{E})$
on vertex set $\mathsf{V}=[n]$ and edge set
\begin{equation}
\label{eq:edges}
\mathsf{E} = \Bigl\{(i,j)\in \tbinom{[n]}{2} : v_i \wedge v_j \in K\Bigr\}.
\end{equation}
Dually, $V^{\vee}$ is spanned by $\{e_1,\dots , e_n\}$
and $K^{\perp}$ is the linear subspace spanned by the elements
$\{e_i\wedge e_j: (i,j) \in \overline{\mathsf{E}}\}$, where
$\overline{\mathsf{E}}=\binom{[n]}{2}  \setminus \mathsf{E}$ is the
edge set of the complement graph
$\overline{\Gamma}=(\mathsf{V},\overline{\mathsf{E}})$.
We also denote by $\mathsf{T}$ the set of complete triangles
in $\Gamma$, and by $\overline{\mathsf{T}}=
\binom{[n]}{3}  \setminus \mathsf{T}$ the set of triangles with
at least one missing edge. To the graph $\Gamma$ one can
associate the \emph{right-angled Artin group}\/
$G_{\Gamma}$ having the following presentation
\[
G_{\Gamma}\coloneqq \Bigl \langle v_i: v_i\cdot v_j=v_j\cdot v_i  \ \
\mbox{ for } \ v_i\wedge v_j\in K\Bigr\rangle.
\]
The cohomology ring of $G_{\Gamma}$ can be identified with the Stanley--Reisner
algebra $E/\langle K^{\vee}\rangle_E$, where $E=\bigwedge V^{\vee}$ is the
exterior algebra generated by $e_1, \ldots, e_n$.

A graph $\Gamma'=(\mathsf{V}',\mathsf{E}')$ of $\Gamma$ is a
\defi{full subgraph of $\Gamma$} (or, an induced subgraph
on the vertex set $\mathsf{V}'\subseteq \mathsf{V}$)
if $\mathsf{E}'=\mathsf{E}\cap  \binom{\mathsf{V}'}{2}$.
Given two graphs, $\Gamma'=(\mathsf{V}',\mathsf{E}')$ and
$\Gamma''=(\mathsf{V}'',\mathsf{E}'')$, their \defi{join} $\Gamma=\Gamma'*\Gamma''$,
is the graph with vertex set $\mathsf{V}=\mathsf{V}'\cup\mathsf{V}''$ and having the edge set
$\mathsf{E}=\mathsf{E'}\cup\mathsf{E}''\cup \{(i',i'') : i'\in \mathsf{V}', i''\in \mathsf{V}''\}$.

Let $S=S_{\Gamma}$ be the polynomial ring $\k[x_1,\dots,x_n]$ and
let $W_\Gamma=W(G_{\Gamma})\coloneqq W(V,K)$ be the Koszul module associated
to $\Gamma$. This module has the following graph-theoretic description:

\begin{lemma}[\cite{PS-jlms07}]
\label{lem:kg-pres}
The Koszul module of a graph $\Gamma$ admits a
presentation,
\begin{equation}
\label{eq:w-gamma}
\begin{tikzcd}[column sep=19pt]
W_\Gamma=\coker \big\{ \Theta \colon
\spn(\overline{\mathsf{T}}) \otimes S \ar[r]& \spn(\overline{\mathsf{E}}) \otimes S \bigr\} ,
\end{tikzcd}
\end{equation}
where $\Theta$ is the matrix with entries
\begin{equation}
\label{eq:Theta}
\Theta_{ijk,\ell m} =
\begin{cases}
x_k & \text{if $\ell=i$, $m=j$}\\
-x_j & \text{if $\ell=i$, $m=k$}\\
x_i & \text{if $\ell=j$, $m=k$}\\
0 & \text{otherwise}.
\end{cases}
\end{equation}
\end{lemma}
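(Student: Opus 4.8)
The plan is to read off the presentation \eqref{eq:w-gamma} directly from \eqref{eq:presentation2}, which displays $W(V,K)$ as the cokernel of the map $\bigwedge^3 V\otimes S(-1)\to(\bigwedge^2 V/K)\otimes S$ induced by the Koszul differential $\delta_3$, once both ends are rewritten combinatorially. For the target, I would first observe that since $K$ is spanned by wedge-monomials $v_i\wedge v_j$, linear independence of the monomials $\{v_a\wedge v_b\}$ forces $K=\spn\{v_i\wedge v_j : v_i\wedge v_j\in K\}$; hence the edge set $\mathsf E$ of \eqref{eq:edges} is precisely $\{(i,j):v_i\wedge v_j\in K\}$, the residues of the monomials $v_i\wedge v_j$ with $(i,j)\in\overline{\mathsf E}$ form a basis of $\bigwedge^2 V/K$, and $(\bigwedge^2 V/K)\otimes S\cong\spn(\overline{\mathsf E})\otimes S$.

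For the source, I would evaluate the composite $\bigwedge^3 V\otimes S(-1)\to(\bigwedge^2 V/K)\otimes S$ on the standard basis $v_i\wedge v_j\wedge v_k$ ($i<j<k$) of $\bigwedge^3 V$. With the sign convention for $\delta_2$ fixed in \eqref{eq:def-WVK}, the Koszul differential is $\delta_3(v_i\wedge v_j\wedge v_k)=x_i\,v_j\wedge v_k-x_j\,v_i\wedge v_k+x_k\,v_i\wedge v_j$ (the signs being those for which $\delta_2\circ\delta_3=0$). Projecting to $\bigwedge^2 V/K$, each of the three terms survives exactly when its index pair is a non-edge and dies when it is an edge. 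Two consequences follow. First, if $(i,j,k)\in\mathsf T$ is a complete triangle then all three pairs lie in $\mathsf E$, so $v_i\wedge v_j\wedge v_k\mapsto 0$; these generators are therefore redundant and may be deleted from the source of \eqref{eq:presentation2} without changing the cokernel, leaving a presentation with source $\spn(\overline{\mathsf T})\otimes S$ as in \eqref{eq:w-gamma} (carrying the internal degree shift already present in \eqref{eq:presentation2}). Second, for $(i,j,k)\in\overline{\mathsf T}$ the image equals $x_i\,\overline{v_j\wedge v_k}-x_j\,\overline{v_i\wedge v_k}+x_k\,\overline{v_i\wedge v_j}$, with a summand omitted precisely when its pair lies in $\mathsf E$. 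Expanding this in the basis $\{\overline{v_\ell\wedge v_m}:(\ell,m)\in\overline{\mathsf E}\}$ yields exactly the matrix $\Theta$ recorded in \eqref{eq:Theta}, so $W_\Gamma=\coker(\Theta)$.

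There is no real obstacle here: the argument is bookkeeping, and the only point requiring attention is matching signs so that the entries of $\Theta$ come out as $+x_i,-x_j,+x_k$ rather than in some other pattern — this is forced (up to overall normalization) by the convention in \eqref{eq:def-WVK} together with $\delta_2\circ\delta_3=0$. Since this presentation is due to Papadima and Suciu, in the write-up I would keep the citation to \cite{PS-jlms07} and include only this short derivation for the reader's convenience.
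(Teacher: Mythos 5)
Your derivation is correct. The paper itself offers no proof of this lemma --- it is stated with a citation to \cite{PS-jlms07} --- but your argument is the natural one and matches the standard derivation: reading the presentation off \eqref{eq:presentation2}, identifying $\bigl(\bwedge^2 V/K\bigr)\otimes S$ with $\spn(\overline{\mathsf{E}})\otimes S$ via the monomial basis, noting that the generators indexed by complete triangles map to zero and may be discarded, and checking that the surviving entries are exactly those of $\Theta$ (your sign pattern $x_k,-x_j,x_i$ is consistent with the paper's own worked instance in Example \ref{ex:4paths}).
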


\begin{example}
\label{ex:kn}
For the complete graph $K_n$ on $n$ vertices,
we have $W_{K_n}=0$. On the opposite end, if $\Gamma=\overline{K_n}$
is a discrete graph, then $W_\Gamma=\ker(\delta_2)$.
\end{example}

Let $\RR_{\Gamma}=\RR(G_{\Gamma})\coloneqq \RR(V,K)$ be the resonance scheme
associated to the Koszul module $W_\Gamma$. The underlying set
$\RR_{\Gamma}$ was described in \cite[Theorem 5.5]{PS-mathann}
as a union of coordinate subspaces of $V^{\vee}$.
For a subgraph $\Gamma'\subseteq \Gamma$, we denote by
$V_{\Gamma'}^{\vee}\subseteq V^{\vee}$ the coordinate subspace
spanned by the vertices of $\Gamma'$. With this notation,
\begin{equation}
\label{eq:PS-res-formula}
\RR_{\Gamma}=\bigcup \Bigl\{V^{\vee}_{\Gamma'}: \Gamma'
\mbox{ is a maximally disconnected full subgraph of $\Gamma$}\Bigr\}.
\end{equation}

The next result explains in combinatorial terms
the isotropicity and separability conditions for
resonance components introduced in this paper.

\begin{proposition}
\label{prop:iso-sep-graph}
Let $\Gamma$ be a connected graph,
let $\Gamma'$ be a maximally disconnected full
subgraph, and let ${V'}^{\vee}=V^{\vee}_{\Gamma'}$
be the corresponding component of $\RR_{\Gamma}$. Then,
\begin{enumerate}
\item \label{is1} ${V'}^{\vee}$ is isotropic if and only if $\Gamma'$ is discrete.
\item \label{is2} ${V'}^{\vee}$ is separable
if and only if $\Gamma=\Gamma'*\Gamma''$.
\end{enumerate}
In particular, isotropic implies separable for the resonance varieties of graphs.
\end{proposition}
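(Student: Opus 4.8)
The plan is to reduce both parts to a purely combinatorial statement about monomial subspaces of $\bwedge^2 V^{\vee}$, and then to extract the ``in particular'' clause from the maximality of $\Gamma'$. First I would set $\mathsf{V}''\coloneqq\mathsf{V}\setminus\mathsf{V}'$, let $\Gamma''$ be the full subgraph of $\Gamma$ on $\mathsf{V}''$, and put ${V''}^{\vee}\coloneqq\spn\{e_j:j\in\mathsf{V}''\}$, so that $V^{\vee}={V'}^{\vee}\oplus{V''}^{\vee}$. In the fixed basis, each subspace that enters the definitions of isotropy and separability is spanned by wedge-monomials $e_i\wedge e_j$: by construction $K^{\perp}=\spn\{e_i\wedge e_j:(i,j)\in\overline{\mathsf{E}}\}$ is spanned by the non-edges of $\Gamma$; the space $\bwedge^2{V'}^{\vee}$ is spanned by those $e_i\wedge e_j$ with $i,j\in\mathsf{V}'$; and the degree-$2$ part of the exterior ideal is $\bwedge^2 V^{\vee}\cap\langle{V'}^{\vee}\rangle_E=\bwedge^2{V'}^{\vee}\oplus({V'}^{\vee}\wedge{V''}^{\vee})$, spanned by the $e_i\wedge e_j$ with $\{i,j\}\cap\mathsf{V}'\ne\emptyset$. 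Since the intersection of two monomial subspaces is spanned by their common basis monomials, every intersection appearing below is again monomial and can be read off directly from $\Gamma$.

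With this bookkeeping in hand, part \eqref{is1} is immediate: ${V'}^{\vee}$ is isotropic, i.e.\ $\bwedge^2{V'}^{\vee}\subseteq K^{\perp}$, exactly when every monomial $e_i\wedge e_j$ with $i,j\in\mathsf{V}'$ lies in $K^{\perp}$, i.e.\ when every pair of distinct vertices of $\mathsf{V}'$ is a non-edge of $\Gamma$; as $\Gamma'$ is a full subgraph, this says $\Gamma'$ has no edges, that is, $\Gamma'$ is discrete. For part \eqref{is2}, the description above gives $K^{\perp}\cap\langle{V'}^{\vee}\rangle_E=\spn\{e_i\wedge e_j:(i,j)\in\overline{\mathsf{E}},\ \{i,j\}\cap\mathsf{V}'\ne\emptyset\}$, so the separability condition $K^{\perp}\cap\langle{V'}^{\vee}\rangle_E\subseteq\bwedge^2{V'}^{\vee}$ fails precisely when some non-edge of $\Gamma$ joins a vertex of $\mathsf{V}'$ to a vertex of $\mathsf{V}''$. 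Hence ${V'}^{\vee}$ is separable iff every $i\in\mathsf{V}'$ and $j\in\mathsf{V}''$ are adjacent in $\Gamma$; since $\Gamma$, $\Gamma'$, $\Gamma''$ are all full subgraphs of $\Gamma$, this is exactly the assertion that $\Gamma=\Gamma'*\Gamma''$. (Alternatively, \eqref{is2} can be deduced from Lemma \ref{lem:alt-sep} by running the same monomial computation for the map $p_M$ of \eqref{eq:projection-m}.) I would emphasize that up to this point only the hypothesis that $\Gamma'$ is a full subgraph has been used.

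For the final claim I would finally invoke the maximality of $\Gamma'$. Assume ${V'}^{\vee}$ is isotropic, so by \eqref{is1} the graph $\Gamma'$ is discrete; since $\Gamma$ is connected and $\Gamma'$ is disconnected, $\emptyset\ne\mathsf{V}'\subsetneq\mathsf{V}$. Suppose, for contradiction, that some $i\in\mathsf{V}'$ and $j\in\mathsf{V}''$ are non-adjacent in $\Gamma$. In the full subgraph of $\Gamma$ on $\mathsf{V}'\cup\{j\}$, the vertex $i$ then has no neighbours at all: none inside $\mathsf{V}'$ because $\Gamma'$ is discrete, and not $j$ by assumption. Thus this full subgraph is disconnected (it contains the two vertices $i$ and $j$, with $i$ isolated) and strictly contains $\Gamma'$, contradicting the maximality of $\Gamma'$. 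Therefore every vertex of $\mathsf{V}'$ is adjacent to every vertex of $\mathsf{V}''$, so $\Gamma=\Gamma'*\Gamma''$ by \eqref{is2}; that is, ${V'}^{\vee}$ is separable. The only genuine idea here is this last maximality argument; the rest is an unwinding of definitions, and the main thing to get right is the monomial bookkeeping — in particular, correctly identifying the degree-$2$ component of the ideal $\langle{V'}^{\vee}\rangle_E$.
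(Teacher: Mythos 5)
Your proof is correct and follows essentially the same route as the paper: both parts \eqref{is1} and \eqref{is2} are the same monomial bookkeeping with $K^{\perp}$, $\bwedge^2{V'}^{\vee}$, and the degree-two part $\bwedge^2{V'}^{\vee}\oplus({V'}^{\vee}\wedge{V''}^{\vee})$ of $\langle{V'}^{\vee}\rangle_E$ that the paper carries out. The only difference is that you explicitly justify the ``in particular'' clause via the maximality of $\Gamma'$ (adjoining a vertex $j\in\mathsf{V}''$ non-adjacent to some $i\in\mathsf{V}'$ would yield a larger disconnected full subgraph), a step the paper leaves implicit; this is a correct and worthwhile addition.
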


\begin{proof}
By definition, the linear subspace ${V'}^{\vee}$ is isotropic if
$\bwedge^2 {V'}^{\vee} \subseteq K^{\perp}$, that is, the set
$\{e_i\wedge e_j: (i,j)\in \mathsf{V}'\}$ is contained in
$\{e_i\wedge e_j: (i,j)\in \overline{\mathsf{E}}\}$. This last condition
amounts to $\binom{\mathsf{V}'}{2}\subseteq \overline{\mathsf{E}}$,
which is equivalent to $\Gamma'$ being discrete.

Finally, ${V'}^{\vee}$ is separable if
$K^{\perp} \cap \big(\bwedge^2 {V'}^{\vee} \oplus ({V'}^{\vee}\otimes {V''}^{\vee})\big)
\subseteq \bwedge^2 {V'}^{\vee}$.
This condition is equivalent to $\overline{\mathsf{E}} \cap \big( \binom{\mathsf{V}'}{2} \cup
( \mathsf{V}', \mathsf{V}'') \big)\subseteq  \binom{\mathsf{V}'}{2} $, that is,
$\overline{\mathsf{E}} \cap ( \mathsf{V}', \mathsf{V}'') =\emptyset$, which again
means that $\Gamma=\Gamma'*\Gamma''$.
\end{proof}

We now give two classes of graphs that satisfy
the conditions of Proposition \ref{prop:iso-sep-graph},
and thus, of Theorem \ref{thm:sep->reduced}.
In what follows, we denote by $K_n$ the complete
graph on $n$ vertices.

\begin{example}
\label{ex:join-discrete}
Let $\Gamma=\overline{K_{n_1}} *\cdots * \overline{K_{n_r}}$
be a complete multipartite graph, that is, an iterated join of
discrete graphs. Then $\RR_{\Gamma}$ is
projectively disjoint and isotropic (and hence, separable).
\end{example}

As a concrete example, take the complete graph $K_n=K_1*\cdots *K_1$;
then $\Rproj_{K_n}=\emptyset$.  As another example, consider the
square, $\Gamma=\overline{K_2} * \overline{K_2}$;
then $\Rproj_\Gamma$ is the disjoint union of two (isotropic) lines.

\begin{example}
\label{ex:q-diagonal}
Let $\Gamma$ be a graph obtained from the square with
one diagonal, by taking iterated cones.
Then $\Rproj_{\Gamma}$ consists of a single line, and therefore is
projectively disjoint and isotropic (and hence, separable).
\end{example}

In general, though, the resonance varieties $\RR_{\Gamma}$ are neither
isotropic, nor separable, nor projectively disjoint.  The simplest such
example is as follows.

\begin{example}\label{ex:4paths}
\label{ex:proj reduced non separable}
Let $\Gamma$ be a path on $4$ vertices. Then $V^{\vee}=\spn\{e_1,e_2,e_3,e_4\}$
and $K^{\perp}=\spn\{e_1\wedge e_2,  e_2\wedge e_3, e_3\wedge e_4\}$.
Setting $S=\k[x_1,x_2,x_3,x_4]$, the $S$-module $W_{\Gamma}$ is
presented by the matrix
\[
\Theta_{\Gamma}=
 \bordermatrix{
& {\scriptstyle 13} & {\scriptstyle 14} & {\scriptstyle 24} \cr
{\scriptstyle 123} & - x_2&  0& 0 \cr
{\scriptstyle 124} & 0&-x_2&x_1 \cr
{\scriptstyle 134} & x_4&-x_3&0 \cr
{\scriptstyle 234} & 0& 0&-x_3
} \, .
\]
Note that $\Fitt_0(W_{\Gamma})=(x_2)\cap (x_3) \cap (x_1,x_2^2,x_3^2,x_4)$ is
not reduced, though the annihilator $\Ann(W_{\Gamma})=(x_2)\cap (x_3)$ is reduced.
It follows that $\RR(V,K)=\{x_2=0\} \cup \{x_3=0\}$
is linear and reduced, but neither projectively disjoint, nor isotropic, nor separable.
\end{example}

We refer to \cite{AFRSS} for a comprehensive study of the higher Koszul modules
and the higher resonance schemes associated to monomial ideals corresponding
to arbitrary (finite) simplicial complexes.

\newcommand{\arxiv}[1]
{\texttt{\href{http://arxiv.org/abs/#1}{arXiv:#1}}}
\newcommand{\arx}[1]
{\texttt{\href{http://arxiv.org/abs/#1}{arxiv:}}
\texttt{\href{http://arxiv.org/abs/#1}{#1}}}
\newcommand{\arxx}[2]
{\texttt{\href{https://arxiv.org/abs/#1.#2}{arxiv:#1.}}
\texttt{\href{https://arxiv.org/abs/#1.#2}{#2}}}
\newcommand{\doi}[1]
{\texttt{\href{http://dx.doi.org/#1}{doi:#1}}}
\renewcommand{\MR}[1]
{\href{http://www.ams.org/mathscinet-getitem?mr=#1}{MR#1}}

\bibliographystyle{amsplain}

\end{document}